\newtheorem{theorem}{Theorem}[section]
\newtheorem{proposition}[theorem]{Proposition}
\newtheorem{definition}[theorem]{Definition}
\newtheorem{conjecture}[theorem]{Conjecture}
\newtheorem{hypothesis}[theorem]{Hypothesis}
\newtheorem{remark}[theorem]{Remark}
\newtheorem{remarks}[theorem]{Remarks}
\def\Q{\mathbb{Q}}
\def\Z{\mathbb{Z}}
\def\F{\mathbb{F}}
\let\ds=\displaystyle
\def\Cl{{\mathcal C}\hskip-2pt{\ell}}
\def\order{\raise1.5pt \hbox{${\scriptscriptstyle \#}$}}
\def\prd{\displaystyle\mathop{\raise 2.0pt \hbox{$\prod$}}\limits}
\def\sm{\displaystyle\mathop{\raise 2.0pt \hbox{$\sum$}}\limits}
\begin{document}

\markboth{Georges Gras}{}

\title[Successive maxima of class numbers]
{Successive maxima of the non-genus part \\ of class numbers}

\keywords{class numbers; quadratic fields; $p$-cyclic fields; 
successive maxima; genus theory, $\varepsilon$-conjectures}
\subjclass[2010]{11R29, 11R11, 08-04}

\author{Georges Gras}
\address{Villa la Gardette, 4 Chemin Ch\^ateau Gagni\`ere,
F-38520 Le Bourg d'Oisans}
\email{g.mn.gras@wanadoo.fr}
\date{November 28, 2019}

\begin{abstract} Some PARI programs have bringed out a 
property for the non-genus part of the class number 
of the imaginary quadratic fields, with respect to 
$(\sqrt D\,)^{\varepsilon}$, where $D$ is the absolute value of the 
discriminant and $\varepsilon \in ]0, 1[$, in relation with the 
$\varepsilon$-conjecture. The general Conjecture 
\ref{mainconj}, restricted to quadratic fields, states that, for  
$\varepsilon \in ]0, 1[$, the successive maxima, as $D$ increases,
of $\frac{H}{2^{N-1} \cdot (\sqrt D\,)^{\varepsilon}}$, where $H$ is the 
class number and $N$ the number of ramified primes, occur 
only for prime discriminants (i.e., $H$ odd); we perform computations 
giving some obviousness in the selected intervals. For degree 
$p>2$ cyclic fields, we define a ``mean value'' of the non-genus parts of 
the class numbers of the fields having the same conductor and obtain 
an analogous property on the successive maxima. 
In Theorem \ref{thm} we prove, under an assumption (true for $p=2, 3$), 
that the sequence of successive 
maxima of $\frac{H}{p^{N-1} \cdot (\sqrt D\,)^{\varepsilon}}$ is infinite.
Finally we consider cyclic or non-cyclic abelian 
fields of degrees $4, 8, 6, 9, 10, 30$ to test the Conjecture \ref{mainconj}. 
The successive maxima of $\frac{H}{(\sqrt D\,)^{\varepsilon}}$ are also analyzed.
\end{abstract}

\maketitle

\vspace{-1.0cm}
\tableofcontents

\vspace{-1.0cm}
\section{Introduction and main conjecture} 
Let ${\mathcal K}_G^s$ be the set of Galois number fields of 
{\it abelian} Galois group $G$ and signature $s$; for $K \in {\mathcal K}_G^s$, 
of discriminant $D_K$, let $H_K$ be the class number of $K$ and $g_{K/\Q}^{}$ 
the ``genus part'' of $H_K$ (formula \eqref{genusformula}).
The purpose of this paper is to suggest that for any $\varepsilon \in ]0, 1[$,
the successive maxima of a suitable mean of the $\frac{H_K}
{\raise2.5pt \hbox{$g_{K/\Q}^{}$} \cdot (\sqrt{\vert D_K \vert}\,)^{\varepsilon}}$
over the set ${\mathcal F}_{G,f}^s$ of fields $K \in {\mathcal K}_G^s$ of 
conductor~$f$, have some 
smooth behavior, regarding the number $N:= \omega(f)$ of ramified primes, 
these sets of fields being ordered by increasing conductors $f$; of course,
for $G\simeq \Z/2\Z$ and $s$ fixed, a quadratic field of conductor $f$ is unique.

\smallskip
By ``successive maxima'' of a sequence $(X_i)_{i \geq 1}$ of positive 
numbers, we mean the sequence $(Y_n)_{n \geq 1}$ inductively defined by:
$Y_1=X_1$, $Y_{n+1}=X_{i_{n+1}}$, for the smallest index $i_{n+1}$ such that
$X_{i_{n+1}} > Y_n$; thus $Y_{n+1} > Y_n$ for all $n$,
the sequence $(Y_n)$ being finite or infinite. 

\smallskip
Such a $Y_n$
will also be called a {\it local maximum}.

\smallskip
For instance, the case of imaginary quadratic fields will give the following 
details of computation with $\varepsilon =0.05$ (see \S\,\ref{P1}). The 
sequence $X := \frac{H}{2^{N-1} \cdot (\sqrt{D}\,)^{\varepsilon}}$
is indexed by the absolute value of the discriminants and the 
successive maxima $Y$ are written with {\footnotesize $<< \cdots >>$}
(all are with $N=1$):

\smallskip
\footnotesize
\begin{verbatim}
<<D_K=-3    Y=0.972908>>   N=1    H=1
  D_K=-4    X=0.965936     N=1    H=1
  D_K=-7    X=0.952516     N=1    H=1
  D_K=-8    X=0.949342     N=1    H=1
  D_K=-11   X=0.941814     N=1    H=1
  D_K=-15   X=0.934539     N=2    H=2
  D_K=-19   X=0.929033     N=1    H=1
  D_K=-20   X=0.927842     N=2    H=2
<<D_K=-23   Y=2.773818>>   N=1    H=3
  D_K=-24   X=0.923622     N=2    H=2
  D_K=-31   X=2.753196     N=1    H=3
  D_K=-35   X=0.914951     N=2    H=2
  D_K=-39   X=1.824960     N=2    H=4
  D_K=-40   X=0.911902     N=2    H=2
  D_K=-43   X=0.910255     N=1    H=1
<<D_K=-47   Y=4.541167>>   N=1    H=5
  D_K=-51   X=0.906380     N=2    H=2
  D_K=-52   X=0.905941     N=2    H=2
  D_K=-55   X=1.809343     N=2    H=4
  D_K=-56   X=1.808528     N=2    H=4
  D_K=-59   X=2.709255     N=1    H=3
  D_K=-67   X=0.900218     N=1    H=1
  D_K=-68   X=1.799771     N=2    H=4
<<D_K=-71   Y=6.292403>>   N=1    H=7
  D_K=-79   X=4.482593     N=1    H=5
  D_K=-83   X=2.686236     N=1    H=3
  (...)
  D_K=-163  X=0.880430     N=1    H=1
  D_K=-164  X=3.521185     N=2    H=8
<<D_K=-167  Y=9.678872>>   N=1    H=11
  D_K=-168  X=0.879766     N=3    H=4
  (...)
\end{verbatim}

\normalsize
For short, one can say that the expression under consideration
is locally maximum only when $N_K$ ($K \in{\mathcal F}_{G,f}^s$)
is minimum, which is natural in some sense since, as we know,
$N_K \gg 0$ gives a large genus number dividing $H_K$
(and a large discriminant $D_K$). The study performed in \cite{Gr3}, 
for the degree $p$ cyclic fields, shows that the genus part, and more 
precisely the $p$-part of $H_K$, may be an obstruction regarding the 
$\varepsilon$-conjectures on the behavior of 
$\frac{H_K}{(\sqrt{\vert D_K \vert}\,)^{\varepsilon}}$ as 
$\vert D_K \vert \to \infty$, at least for sparse families 
of fields when considering the Koymans--Pagano density 
results \cite{KP} (see Section \ref{para2}).

\begin{remark}{\rm The genus formula \eqref{genusformula}, recalled below, yields, 
in the abelian case, $g_{K/\Q}^{} = \frac{1}{[K : \Q]} \cdot \prod_\ell \, e_\ell$, where
$e_\ell$ is the ramification index of $\ell$; it depends simply on the number $N_K$ 
of ramified primes only in the case $G \simeq \Z/p\Z$ where $g_{K/\Q}^{}=p^{N_K-1}$.
Otherwise, many ``subcases'' may occur, which make more difficult some statements.
Indeed, it is clear that the structure of $G$ has a strong influence on 
$N_K= \omega(f_K)$, thus on $g_{K/\Q}^{}$; but the non-genus part
of $H_K$ is exactly the ``non-predictible'' part of the class number.

\smallskip
The difficulty is to take into account these remarks and to give precise 
and relevant definitions then a right analysis of the growth of these functions 
of the class numbers regarding $(\sqrt{\vert D_K \vert}\,)^{\varepsilon}$; 
so we shall restrict ourselves to some arbitrary contexts, hoping that some 
general properties will emerge from these experiments and will strengthen the 
Conjecture \ref{genconj}.}
\end{remark}

\subsection{Genus field and genus number $g_{K/\Q}^{}$}
For simplicity, to recall genus theory, we restrict ourselves 
to the base field $k=\Q$.
The genus field, according to an extension $K/\Q$,
is the maximal subextension ${\mathcal H}_{K/\Q}$ 
of the Hilbert class field ${\mathcal H}_K$ of $K$ (in the restricted sense) 
equal to the compositum of $K$ with an abelian extension $A$ of $\Q$.
Clearly we can take $A={\mathcal H}_K^{\rm ab}$ (the maximal abelian 
subextension of ${\mathcal H}_K$):
\unitlength=0.80cm
\medskip
$$\vbox{\hbox{\hspace{-1.0cm} \begin{picture}(11.5,2.8)
% horizontales
\put(7.2,2.0){\line(1,0){1.4}}
\put(4.5,2.0){\line(1,0){1.3}}
\put(4.9,0.5){\line(1,0){1.3}}
\put(2.35,0.5){\line(1,0){1.3}}
% vertical
\put(4.05,0.9){\line(0,1){0.75}}
\put(6.55,0.9){\line(0,1){0.75}}
\bezier{350}(4.2,2.3)(6.5,2.8)(8.8,2.3)
\put(6.0,2.7){$\simeq \Cl_K$}
\put(8.7,1.9){${\mathcal H}_K$}
\put(6.0,1.9){${\mathcal H}_{K/\Q}$}
\put(3.9,1.9){$K$}
\put(6.3,0.4){${\mathcal H}_K^{\rm ab}$}
\put(1.9,0.4){$\Q$}
\put(3.8,0.4){$K^{{\rm ab}}$}
\end{picture} }} $$
\unitlength=1.0cm
\noindent
The genus number is $g_{K/\Q}^{} := [{\mathcal H}_{K/\Q} :K]$.
In the general Galois case, we have (in the restricted sense for class groups):
\begin{equation}\label{genusformula}
g_{K/\Q}^{} = \ds \frac{\prod_\ell \, e^{\rm ab}_\ell}
{[K^{\rm ab} : \Q]},
\end{equation}

\noindent
where $e^{\rm ab}_\ell$ is the ramification index of the prime 
$\ell$ in $K^{\rm ab}/\Q$. A general non-Galois formula does exist 
(e.g., \cite[Theorem IV.4.2 \& Corollaries]{Gr1}).

\smallskip
We shall call $[{\mathcal H}_K : {\mathcal H}_{K/\Q}]$ the non-genus part of the
class number $H_K := \order \Cl_K = [{\mathcal H}_K : K]$. So
we shall put, in the abelian case:
$$h_K = \frac{H_K}{g_{K/\Q}^{}} = \frac{H_K}{[{\mathcal H}_{K/\Q} :K]} = 
\frac{[K : \Q] \cdot H_K}{\prod_\ell \, e_\ell}. $$

 When $K/\Q$ is cyclic, the genus number 
$g_{K/\Q}^{}$ is equal to the number of invariant classes by 
$G:={\rm Gal}(K/\Q)$ (Chevalley's formula \cite{Che}).

\subsection{Practical framework -- Main conjecture} 
Since the classification of Galois number fields, of given non-abelian 
Galois group, by increasing conductors or discriminants, is too complex, it
is assumed in all the sequel that the extensions $K/\Q$ are abelian of 
Galois group $G$ (and of signature $s$); which allows a 
classification using the conductors to list the suitable fields, even if a 
conductor may correspond to a finite number of fields. 

\smallskip
For prime degree $p$ cyclic fields, the classifications via increasing discriminants 
is equivalent to that using the conductors $f_K$ since $D_K=f_K^{p-1}$.
 
\begin{definition}\label{def}
Denote by ${\mathcal K}_G^s$ the set of Galois number fields $K$, of 
given abelian Galois group $G$ and signature $s=\pm 1$ (i.e., real or imaginary 
fields). 

\smallskip
The conductor of the field $K$ is denoted $f_K$ and its discriminant
$D_K$; then $\omega(f_K)= \omega(D_K) =: N_K$ denotes the number of
ramified primes. We then define ${\mathcal F}_{G,f}^s :=
\{K \in {\mathcal K}_G^s, \ \, f_K=f\}$.

\smallskip
If $G$ is cyclic of order $d$ and if $s$ is fixed, we shall put 
${\mathcal K}_G^s = {\mathcal K}_d^-$ or ${\mathcal K}_d^+$
and in the same way for ${\mathcal F}_{G,f}^s =: {\mathcal F}_{d,f}^s$,
$s = \pm1$.

\smallskip
We shall in general remove the index $K$ for the above objects,
giving the notations $f$, $D$, $N$, $H$, $h$.
\end{definition}

If $G \simeq \Z/p\Z$, the genus number only depends on the number 
$N$ of prime divisors of the conductor $f$ and one gets:
$$h=\frac{H}{p^{N-1}}. $$

It is clear that the structure of abelian group of $G$ leads to some 
constraints on ${\rm Gal}(\Q(\mu_f)/\Q) \simeq (\Z/f\,\Z)^\times$, then 
on $N$ (e.g., if for some prime number $p$, ${\rm dim}_{\F_p}(G/G^p) \geq 2$,
the conductor $f$ of $K$ cannot be a prime number $q\ne p$).
Moreover, the wild ramification gives some trouble as the case
of abelian $2$-extensions with even conductors since the $2$-rank of
$(\Z/2^n\Z)^\times$ ($n\geq 2$) may be $1$ or $2$; the case of $p>2$ is
easier since $(\Z/p^n\Z)^\times \otimes \Z_p$ ($n\geq 2$) is cyclic.
The mixed case where $G$ is a product of $p$-groups for at least
two distinct primes $p$ depends on several sub-cases.

\smallskip
If two fields, in the same family, have the same discriminant, they 
have the same conductor, except possibly for the wild parts; but in our
examples, we will have equality.

\smallskip
Of course, a list by increasing discriminants may be very different from 
the same list by increasing conductors; for instance, the 
degree $4$ cyclic fields $K_1$ and $K_2$ such that
$D_{K_1}=5^3 \,17^2=36125$ and $D_{K_2}=17^3 \,3^2=44217 > D_{K_1}$
are of conductors $f_{K_1}=85$ and $f_{K_2}=51$, respectively.

\smallskip
So, it makes sense to consider, instead, a classification by increasing 
conductors, which seems to be a better classification with regard to
Galois structure and ramification. But, considering the 
$\varepsilon$-conjectures, the class numbers are usually 
compared with suitable functions of the discriminant, as 
$(\sqrt{\vert D_K \vert}\,)^{\varepsilon}$.

\smallskip
Whence the following conjecture, strengthened by the various 
computations that we shall perform in the forthcoming sections:

\begin{conjecture}\label{genconj}
Let $G$ be an abelian group and consider the family ${\mathcal K}_G^s$
(see Definition \ref{def}).
Let  ${\mathcal F}_{G, f}^s$ (resp. ${\mathcal F}'^s_{G, f}$) be the set of abelian fields 
$K \in {\mathcal K}_G^s$ of conductor $f$ (resp. of any conductor $f' \mid f$).
For any $\varepsilon \in ]0, 1[$, let:
\begin{equation*}
\begin{aligned}
C_{\varepsilon}({\mathcal F}_{G, f}^s) & := \Bigg[ \prod_{K \in {\mathcal F}_{G, f}^s}\ 
\frac{\order \Cl_K}{\raise2.5pt \hbox{$g_{K/\Q}^{}$} \cdot 
(\sqrt{ \vert D_K \vert}\,)^\varepsilon} \Bigg]^{\frac{1}{\order {\mathcal F}_{G, f}^s}}, \\
C'_{\varepsilon}({\mathcal F}'^s_{G, f}) & := \Bigg[ \prod_{K \in {\mathcal F}'^s_{G, f}}\ 
\frac{\order \Cl_K}{\raise2.5pt \hbox{$g_{K/\Q}^{}$} \cdot 
(\sqrt{ \vert D_K \vert}\,)^\varepsilon} \Bigg]^{\frac{1}{\order {\mathcal F}'^s_{G, f}}},\\
C_{\varepsilon}(K) & :=
\ds  \frac{\order \Cl_K} {\raise2.5pt \hbox{$g_{K/\Q}^{}$} \cdot 
(\sqrt{ \vert D_K \vert}\,)^\varepsilon}, \ \, \hbox{for any $K \in  {\mathcal K}_G^s$}. 
\end{aligned}
\end{equation*}

Let $r(G)$ be the minimal number of generators of $G$ and let
$\order G = \prod_{i=1}^t p_i^{n_i}$ be the prime decomposition of the 
order of $G$, then put $R(G) := \sum_{i=1}^t  n_i$.

\smallskip
For instance $r(\Z/4\Z)=r(\Z/6\Z)=1$ and $R(\Z/4\Z)=R(\Z/6\Z)=2$,\,\ldots

\smallskip
We then have conjecturaly\,\footnote{Recall that when $\order G$ is odd, 
the prime $2$ is unramified in the abelian extension $K/\Q$. When $\order G$ 
is even and $2$ ramifies with $8 \!\mid\! f$, one must preferably replace $\omega(f)$
by  $\omega(f) +1$; for simplicity, we shall neglect this exception in the statements
and comments.}:

\smallskip
(i) The successive maxima of $C_{\varepsilon}({\mathcal F}_{G, f}^s)$,
$C'_{\varepsilon}({\mathcal F}'^s_{G, f})$, $C_{\varepsilon}(K)$, as $f$ 
increases from its minimum, occur for conductors $f$ such that 
$r(G) \leq \omega(f)  \leq R(G)$, with probability $1$.\,\footnote{This 
means the exactness except possibly for
a set of conductors of zero density. However, the probabilistic 
behaviour of $\omega(f)$ in $[r(G),R(G)]$ is unclear.} This property
does not depend on the choice of $\varepsilon \in ]0,1[$.

\smallskip
(ii) For any $\varepsilon \in ]0, 1[$, the sequences of successive 
maxima are infinite.
\end{conjecture}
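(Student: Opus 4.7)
The statement splits into a qualitative infinitude claim (ii) and a quantitative localization-with-density claim (i). I would first attack (ii) by exhibiting an infinite subfamily of fields in the ``favourable'' range $\omega(f)=r(G)$ on which the relevant quantity is unbounded; and then turn to (i) by showing that the genus factor in the denominator grows too fast to allow local maxima once $\omega(f)$ leaves the interval $[r(G),R(G)]$.

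For (ii), I would fix the minimal admissible number of ramified primes $N = r(G)$ and let $f$ run through an infinite sequence of conductors realizing it. In the cyclic prime-degree case $G \simeq \Z/p\Z$ this means $f = q$ running through primes $q \equiv 1 \pmod p$, a family of positive Dirichlet density; in general one takes squarefree $f$ with $r(G)$ prime factors in prescribed arithmetic progressions so that ${\rm Gal}(\Q(\mu_f)/\Q) \simeq (\Z/f\Z)^\times$ admits $G$ as a quotient. Along any such subfamily, $g_{K/\Q}^{} = \prod_\ell e_\ell^{{\rm ab}}/[K^{\rm ab}:\Q]$ is bounded (depending only on $G$ and on $N = r(G)$), so $C_\varepsilon(K)$ equals, up to a bounded factor, $H_K/|D_K|^{\varepsilon/2}$. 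The Brauer--Siegel estimate $\log H_K = (\tfrac12 - o(1))\log|D_K|$ then yields $C_\varepsilon(K) \gg |D_K|^{(1-\varepsilon)/2 - o(1)} \to \infty$, forcing infinitely many successive maxima. The same lower bound propagates to the geometric means $C_\varepsilon({\mathcal F}_{G,f}^s)$ and $C'_\varepsilon({\mathcal F}'^s_{G,f})$, each being bounded below by the minimum of its terms.

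For (i), one must quantify the intuition that large $\omega(f)$ forces $g_{K/\Q}^{}$ to grow through the genus formula \eqref{genusformula}, while any upper bound on $H_K$ compatible with the $\varepsilon$-conjecture ($H_K \ll |D_K|^{1/2+o(1)}$) cannot compensate the extra factor $\prod_\ell e_\ell^{\rm ab}$. The plan is to partition ${\mathcal K}_G^s$ according to $\omega(f)$ and show that on every slice with $\omega(f) > R(G)$ the ratio $C_\varepsilon(K)$ is eventually dominated by the current running maximum produced by the construction in the previous paragraph, outside a set of conductors of zero density.

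\emph{Main obstacle.} The genuine difficulty lies in (i): converting the ``genus dominates'' heuristic into a density-one exclusion requires \emph{upper} bounds on non-genus parts of class numbers at a precision currently only conjectural, together with a careful handling of the joint distribution of $\omega(f)$ and $H_K/g_{K/\Q}^{}$ across ${\mathcal K}_G^s$. Even (ii) is unconditional only in restricted situations such as $p = 2, 3$, where Scholz-type reflection or effective Brauer--Siegel bounds are available, which accounts for the extra hypothesis appearing in Theorem \ref{thm}.
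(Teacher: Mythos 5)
You are attempting to prove a statement that the paper itself only states as a conjecture: Conjecture \ref{genconj} is nowhere proved there. Part (i) is supported solely by the PARI computations, and the only rigorous result is Theorem \ref{thm}, a conditional special case of (ii) for $G\simeq\Z/p\Z$ under Hypothesis \ref{hypo}. So the only portion of your sketch that can be checked against an actual argument is your treatment of (ii), and there the central step fails. You invoke ``the Brauer--Siegel estimate $\log H_K=(\tfrac12-o(1))\log|D_K|$'' to conclude $C_\varepsilon(K)\gg |D_K|^{(1-\varepsilon)/2-o(1)}$. Brauer--Siegel controls $\log(H_K R_K)$, the class number \emph{times the regulator}; it lower-bounds $H_K$ alone only when the unit rank is zero, i.e., for imaginary quadratic fields. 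Every other family in the conjecture (real quadratic, degree $p>2$ cyclic, \ldots) is totally real, the regulator can absorb essentially all of $|D_K|^{1/2}$, and conjecturally most such fields have very small class number. The existence of infinitely many $K$ with $\order\Cl_K\geq(\sqrt{|D_K|})^{1-\eta}$ is precisely what the paper must \emph{assume} as Hypothesis \ref{hypo} (known, under GRH, for quadratic fields via Ankeny--Brauer--Chowla and Montgomery--Weinberger, and for $p=3$); it cannot be derived from Brauer--Siegel. You concede this in your closing paragraph, but the body of your argument for (ii) rests on it.

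There is also a structural difference from the paper's route to (ii). You propose to locate the unbounded subfamily inside the slice $\omega(f)=r(G)$, where the genus number is bounded; this requires the large-class-number fields to exist \emph{within that slice}, a strictly stronger input than Hypothesis \ref{hypo}, and nothing guarantees it. The proof of Theorem \ref{thm} instead accepts the fields supplied by the hypothesis with arbitrary $N=\omega(f)$ and shows the genus factor is harmless: since a conductor with $N$ prime factors exceeds the product of the first $N$ primes $\equiv 1\pmod p$, one has $p^{(p-1)(N-1)}\leq C_{p,\theta}(\sqrt{D_N})^\theta$ for every $\theta>0$ (from \cite{Gr3}), so $p^{N-1}=O((\sqrt{|D|})^{\theta/(p-1)})$ and the exponent $1-\eta-\varepsilon-\theta/(p-1)$ stays positive for small $\eta,\theta$. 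Finally, your plan for (i) is, as you yourself note, only a heuristic: no upper bound on the non-genus part of the class number is available, even conjecturally in usable form, that would exclude local maxima with $\omega(f)>R(G)$ outside a density-zero set of conductors, and the paper claims no such argument.
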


\begin{remarks}{\rm 
(i) The functions $C_{\varepsilon}$ and $C'_{\varepsilon}$ are 
multiplicative means over the fields of same conductor $f$
(resp. of any conductor $f' \mid f$). 
The computation of the successive maxima of the
function $C_{\varepsilon}$ over ${\mathcal K}_G^s$
does not consider any mean and is equivalent to take the maximum
of $\frac{\order \Cl_K} {\raise2.5pt \hbox{$g_{K/\Q}^{}$} \cdot 
(\sqrt{ \vert D_K \vert}\,)^\varepsilon}$ over ${\mathcal F}^s_{G, f}$,
for each $f$, then the successive maxima of these numbers.

\smallskip
One has $\order {\mathcal F}_{G, f}^s=1$, for all $f$, only for the family 
of quadratic fields, in which case, $C_{\varepsilon}({\mathcal F}_{G, f}^s) 
= C_{\varepsilon}(K)$ for the unique $K \in {\mathcal F}_{G, f}^s$.

\smallskip
(ii) The sequences are not necessarily the same as $\varepsilon$ varies, 
but the practice shows that they differ very little.

\smallskip
(iii) It is possible that ``except for a number of conductors of density $0$'' 
may be replaced by ``except a finite number of conductors'', but the numerical
experiments are not sufficient to be more precise.}
\end{remarks}

These kind of functions are related to some less known results
concerning minorations of class numbers and to the $\varepsilon$-conjectures 
that we recall now.

\section{Classical results and $\varepsilon$-conjectures} \label{para2}

\subsection{Upper and lower bounds for class numbers}\label{strong}
The order of magni\-tude of the class number $\order \Cl_K$ 
of a number field $K$ is very irregular but is in part governed by the absolute value 
of the discriminant $D_K$ and the signature of the field $K$; of course, 
this is to be precised and a method is to compare the class number with 
$(\sqrt {\vert D_K \vert}\,)^\varepsilon$, for any $\varepsilon \in ]0, 1[$ fixed.

\smallskip
Using the family ${\mathcal K}_p^s$ of degree $p$ cyclic fields 
for simplicity ($p \geq 2$ prime), let's recall that there exist
an absolute constant ${\mathscr C}$ and, for all $\eta > 0$, 
${\mathscr C}_{\eta}$ such that (Siegel, Landau;
see, e.g., \cite[Chapter 4, \S\,1, Theorem 4.4]{N}):
\begin{equation}\label{0}
\begin{aligned}
(i) \hspace{2.5cm} & \order \Cl_K \leq {\mathscr C} \cdot \sqrt{\vert D_K \vert\,} 
\cdot {\rm log} (\sqrt{\vert D_K \vert}\,),\hspace{2.5cm} \\
(ii) \hspace{2.5cm} & \order \Cl_K \leq {\mathscr C}_{\eta} 
\cdot (\sqrt{\vert D_K \vert}\,)^{1+ \eta}.\hspace{2.5cm}
\end{aligned}
\end{equation}

Under GRH  (Ankeny--Brauer--Chowla; see more comments in 
\cite{Da,DaKM,Du}), we have the following result for quadratic fields 
which will be of some significance for successive maxima:
\begin{proposition}\label{00}
For any $\eta >0$, there exist infinitely many $K \in {\mathcal K}_2^s$ 
such that $\order \Cl_K \geq (\sqrt{\vert D_K \vert}\,)^{1- \eta}$.
\end{proposition}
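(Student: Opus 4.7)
The plan is to deduce the bound from the analytic class number formula combined with Littlewood's GRH lower bound for $L(1, \chi_K)$. Recall that for a quadratic field $K$ with Kronecker character $\chi_K$,
\begin{equation*}
h_K = \frac{w_K \sqrt{|D_K|}}{2\pi}\, L(1, \chi_K) \quad (s = -1),
\qquad
h_K \cdot \log \varepsilon_K = \frac{\sqrt{D_K}}{2}\, L(1, \chi_K) \quad (s = +1),
\end{equation*}
where $w_K$ is the number of roots of unity and $\varepsilon_K$ the fundamental unit. Littlewood's theorem, assuming GRH for $L(s, \chi_K)$, gives the effective lower bound
\begin{equation*}
L(1, \chi_K) \;\gg\; \frac{1}{\log \log |D_K|}
\end{equation*}
valid for all sufficiently large $|D_K|$.

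In the imaginary case $s=-1$, these two facts combine immediately: one gets $h_K \gg \sqrt{|D_K|}/\log\log|D_K|$, which dominates $(\sqrt{|D_K|}\,)^{1-\eta}$ for every fixed $\eta>0$ and all but finitely many $K \in \mathcal{K}_2^-$. Hence the conclusion even holds for \emph{all} sufficiently large discriminants, a fortiori for infinitely many.

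The real case $s=+1$ requires more care because of the regulator $\log \varepsilon_K$, which in general can be as large as $\sqrt{D_K}$ and would wipe out the bound. The standard remedy, going back to Chowla, is to restrict to an infinite family where the unit is controlled: for instance, take $d$ running through positive integers with $n^2+1$ squarefree (an infinite set by an elementary sieve), and let $K = \mathbb{Q}(\sqrt{n^2+1})$, so that $\varepsilon_K = n + \sqrt{n^2+1}$ and $\log \varepsilon_K \ll \log D_K$. Then the class number formula and Littlewood's estimate yield
\begin{equation*}
h_K \;\gg\; \frac{\sqrt{D_K}}{\log D_K \cdot \log\log D_K} \;\geq\; (\sqrt{D_K}\,)^{1-\eta}
\end{equation*}
for all sufficiently large $D_K$ in this family, giving infinitely many examples in $\mathcal{K}_2^+$.

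The main obstacle is thus the real case: one must exhibit an infinite family of fundamental discriminants whose fundamental unit grows only polynomially in $D_K$, so that the regulator does not absorb the gain provided by the GRH lower bound on $L(1,\chi_K)$. The imaginary case is essentially automatic once Littlewood's bound is in hand; the real case is where the substantive input (a Pell-type family of $D_K$ with small regulator) is required.
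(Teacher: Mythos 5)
The paper does not prove this proposition; it is quoted as a known GRH result of Ankeny--Brauer--Chowla, with pointers to the literature, so there is no internal proof to compare against. Your reconstruction is the standard (and correct) argument: Littlewood's conditional bound $L(1,\chi_K)\gg 1/\log\log|D_K|$ plus the class number formula settles the imaginary case outright, and a Pell-type family such as $D=n^2+1$ squarefree tames the regulator in the real case. Two small points you should tighten: you only need $\log\varepsilon_K\le\log\bigl(n+\sqrt{n^2+1}\bigr)\ll\log D_K$, since $n+\sqrt{n^2+1}$ is a unit $>1$ and hence at least the fundamental unit --- you need not (and cannot always) claim it \emph{is} the fundamental unit; and since the paper takes $H$ in the restricted sense for real fields, note that the narrow class number is at least the wide one, so the lower bound transfers. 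One could add that Siegel's theorem $L(1,\chi_K)\gg_\eta |D_K|^{-\eta}$ makes both cases unconditional (at the cost of ineffectivity), but the GRH route matches the paper's framing.
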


Moreover, Littlewood proves the existence of constant 
${\mathscr C}'$, ${\mathscr C}''$, such that:
\begin{equation}\label{wood}
\begin{aligned}
(i)\  \order \Cl_K & \leq {\mathscr C}' \cdot \sqrt{\vert D_K \vert} 
\cdot {\rm log}_2 (\sqrt{\vert D_K \vert}\,), \,\ 
\hbox{for all $K \in {\mathcal K}_2^-$,} \\
(ii)\  \order \Cl_K&  \geq {\mathscr C}'' \cdot \sqrt{\vert D_K \vert} 
\cdot {\rm log}_2 (\sqrt{\vert D_K \vert}\,), \ 
\hbox{for infinitely many  $K \in {\mathcal K}_2^-$.} 
\end{aligned}
\end{equation}
 
For more information and generalization of such phenomena, including
real fields, see \cite{Da, DaKM,Du,GS,Lam,L0,L1,L2,MW,R}.

\subsection{The $\varepsilon$-conjectures}
The concept of $\varepsilon$-conjecture comes from the work of 
Ellenberg--Venkatesh \cite{EV}, in close relation with the heuristics and 
conjectures of Cohen--Lenstra--Martinet. Many developments have followed 
as \cite{AM,DJ,EPW,FW,Gr3,KP,Ma1,Ma2,P-TB-W,W}, to give an order 
of magnitude of various invariants attached to the class group of a number 
field $K$, according to the function $(\sqrt{\vert D_K \vert}\,)^\varepsilon$ 
of its discriminant, for any $\varepsilon > 0$.

\smallskip
We shall now emphasize some of these $\varepsilon$-conjectures and 
precise our purpose, although this paper is not concerned by the $q$-class
groups $\Cl_K \otimes \Z_q$, for each prime $q$, except that genus theory 
involves the $q$-parts of the class group when $q \mid d$, 
as for the degree $p$ cyclic extensions for which  the $p$-Sylow 
subgroup $\Cl_K \otimes \Z_p$ may be very large:

\smallskip
(i) The $p$-rank $\varepsilon$-conjecture for number fields 
claims that for all $\varepsilon > 0$:
$$\order (\Cl_K \otimes \F_p) \ll_{d, p, \varepsilon} \cdot 
(\sqrt{\vert D_K \vert}\,)^\varepsilon, \ \ \hbox{ for all $K \in {\mathcal K}^s_d$}. $$

We have proved the $p$-rank $\varepsilon$-conjecture for 
the family of degree $p$ cyclic extensions \cite[Theorem 2.5]{Gr3}.

\smallskip
Many authors study and prove for ${\mathcal K}^s_d$ inequalities of the form:
\begin{equation}\label{c}
\order (\Cl_K \otimes \F_ p) \ll_{d, p, \eta} 
\cdot (\sqrt{\vert D_K \vert}\,)^{\,c+ \eta}, \ \, \eta>0,
\end{equation} 

with a constant $c \in ]0, 1[$ as small as possible 
\big(e.g., $c=1-\frac{1}{p\,(d-1)}$\big) since the 
equality does exist for $c=1$ (relation \eqref{0}\,(ii)).

\smallskip
(ii) The strong $(p, \varepsilon)$-conjecture that we have considered 
in \cite{Gr3} is:
\begin{equation}\label{sc}
\order (\Cl_K \otimes \Z_p) \ll_{d, p, \varepsilon} \cdot 
(\sqrt{\vert D_K \vert}\,)^\varepsilon, \ \,\hbox{for all $K \in {\mathcal K}^s_d$}. 
\end{equation} 

The results of density of \cite[Theorem 1.1]{KP} prove, under GRH, that the strong 
$(p, \varepsilon)$-conjecture for degree $p$ cyclic fields is true, {\it except possibly 
for sparse subfamilies of fields of zero density}, probably in relation with 
that we bringed out in \S\,\ref{strong}.

\subsection{Non-genus number in the degree $p$ cyclic case}
We have shown in \cite{Gr3}, using the family of degree $p$ 
cyclic extensions $K/\Q$, that the genus subgroup of $\Cl_K$,
may be an obstruction or at least a difficulty, to prove 
inequality \eqref{sc} and one may for instance replace the whole 
class group by its non-genus part in the above 
$(p, \varepsilon)$-conjecture.  

\smallskip
Consider a family ${\mathcal K}^s_p$ of prime degree 
$p \geq 2$ cyclic fields $K$.

\smallskip
Let $G = {\rm Gal}(K/\Q) =: \langle \sigma \rangle$ and let 
$H$, be the class number of $K$ in the restricted sense when
this makes sense (e.g., case of real quadratic fields).

\smallskip
We consider the quotient $\Cl_K/\Cl_K^G \simeq \Cl_K^{1-\sigma}$ 
(of order $h := \frac{H}{g_{K/\Q}^{}}$), of $\Cl_K$ by the subgroup 
of invariant (or ambiguous) classes; this group measures the set of
``exceptional $p$-classes'' and that of classes of prime to $p$ orders.

\smallskip
So we may consider the non-genus $(p,\varepsilon)$-conjecture for this family:
\begin{equation}\label{genus}
\order (\Cl_K \otimes \Z_p)^{1-\sigma} \ll_{p,\varepsilon} \cdot 
(\sqrt {\vert D_K \vert}\,)^\varepsilon,
\end{equation}

except for subfamilies of zero density.

\subsection{Successive maxima -- Main Theorem for degree $p$ cyclic fields}
We have (to our knowledge) no information on the successive maxima of
$\ds \frac{\order \Cl_K}{(\sqrt {\vert D_K \vert}\,)^\varepsilon}$
or $\ds \frac{\order \Cl_K}{\raise 2pt \hbox{$g_{K/\Q}^{}$}
 \cdot (\sqrt {\vert D_K \vert}\,)^\varepsilon}$,
as $K$ varies in some defined family ordered by increasing discriminants
and considering that several fields may have the same discriminant, which
forces to give the more precise definitions used in the Conjecture \ref{genconj}.

\begin{remark}{\rm 
A main problem is then that Proposition \ref{00} (or the lower bound of \eqref{wood})
will be in contradiction with an unconditional global abelian ``strong $\varepsilon$-conjecture'',
this strong $\varepsilon$-conjecture being that for all $\varepsilon \in ]0, 1[$:
\begin{equation}\label{000}
\order \Cl_K \ll_{G, \varepsilon} \cdot 
(\sqrt {\vert D_K \vert}\,)^\varepsilon, 
\end{equation}

for the family ${\mathcal K}_G^s$ of field of abelian Galois group $G$
and signature $s$. 

\smallskip
To overcome this difficulty, we must conjecture that this 
inequality \eqref{000} holds {\it except for a subfamily of 
zero density}. This caution shall be necessary whatever the
context as shown by the work \cite{KP} of Koymans--Pagano on densities 
in the family of degree $p$ cyclic fields for which the results are true
with ``probability~$1$'', in the framework of the above description on the 
possible existence of sparse ``bad subfamilies''.}
\end{remark}

To take into account these phenomena, we base our study on the 
following which holds for $G \simeq \Z/2\Z$ and $G \simeq \Z/3\Z$:

\begin{hypothesis} \label{hypo}
We assume that Proposition \ref{00} does exist in ${\mathcal K}_G^s$ 
for all non-trivial abelian group $G$, in other words that for all $\eta \in ]0, 1[$ there 
exist infinitely many $K \in {\mathcal K}_G^s$ such that 
$\order \Cl_K \geq (\sqrt {\vert D_K \vert}\,)^{1-\eta}$.
\end{hypothesis}

\begin{remark}{\rm This statement is equivalent to: ``for  all $\eta \in ]0, 1[$ there 
exist ${\mathcal C}_\eta > 0$ and infinitely many $K \in {\mathcal K}_G^s$ such 
that $\order \Cl_K \geq {\mathcal C}_\eta \!\cdot\! (\sqrt {\vert D_K \vert}\,)^{1-\eta}$\,''.
Indeed, let $\eta \in ]0, 1[$ be given and let $\theta > 0$ be such that
$\eta_0 := \eta-\theta \in ]0, 1[$; we obtain, with $\eta_0$ and the 
existence of ${\mathcal C}_{\eta_0}$, an infinite 
sequence of fields $K_i \in {\mathcal K}_G^s$ such that:
\begin{equation*}
\begin{aligned}
{\rm log}( \Cl_{K_i}) & \geq {\rm log}({\mathcal C}_{\eta_0}) +
(1 - (\eta - \theta)) \cdot {\rm log}(\sqrt{D_{K_i}}\,) \\
& \geq {\rm log}({\mathcal C}_{\eta_0}) +
\theta \cdot {\rm log}(\sqrt{D_{K_i}}\,)  + (1 - \eta) \cdot {\rm log}(\sqrt{D_{K_i}}\,) \\
& \geq (1 - \eta) \cdot {\rm log}(\sqrt{D_{K_i}}\,)\ \,\hbox{for all $i \geq i_0$, $i_0$ suitable}.
\end{aligned}
\end{equation*}}
\end{remark}

A consequence of this is that for any $\varepsilon \in ]0, 1[$,
the sequence of successive maxima of $\ds \frac{\order \Cl_K}
{(\sqrt{ \vert D_K \vert}\,)^\varepsilon}$ is infinite since 
$\ds \frac{\order \Cl_K}{(\sqrt{ \vert D_K \vert}\,)^\varepsilon} \geq 
(\sqrt{ \vert D_K \vert}\,)^{1 - \eta - \varepsilon}$ will be unbounded,
due to infinitely many fields $K_i \in {\mathcal K}_G^s$, as soon as 
$\eta$ is chosen small enough to get $1 - \eta - \varepsilon >0$.

\smallskip
For the family of degree $p$ cyclic fields (of given signature for $p=2$), 
we will have some information (Theorem \ref{thm}) on the successive maxima of:
$$C_{\varepsilon}(K) := \ds\frac{\order\big( \Cl_K^{1-\sigma} \big)}
{(\sqrt {\vert D_K \vert}\,)^\varepsilon} =:
\ds\frac{H}{p^{N-1} \cdot (\sqrt {D}\,)^\varepsilon},
\ \, N = \omega(D), $$

for which we have proposed the Conjecture \ref{genconj}.
This is related to \eqref{genus} in the following manner for $G \simeq \Z/p\Z$:

\begin{remark}{\rm
The strong $\varepsilon$-conjecture states that,
``for almost all $K$'', the sequence 
$\frac{\order \Cl_{K}}{(\sqrt {\vert D_K \vert}\,)^\varepsilon}$ is 
bounded by a constant ${\mathscr C}_ \varepsilon$ (which 
increases as $\varepsilon$ decreases). For $\varepsilon \in ]0,1[$, this 
constant is enormous (and unbounded as $\varepsilon \to 0$).
Thus the sequence obtained by some discriminants $D_{K_i}$ giving the 
successive maxima, is limited by ${\mathscr C}_ \varepsilon$, up to 
some huge unreachable discriminant; but if the sequence of the
successive maxima is infinite, the ``exceptional cases'' of the
$\varepsilon$-conjecture may be some local maximua, but 
cannot be computed in practice when $\varepsilon$ is too small.}
\end{remark}

Using these lower bounds and our results in \cite{Gr3} proving (unconditionally) 
the $p$-rank $\varepsilon$-conjecture, we can state the following main result 
for the family ${\mathcal K}_p^s$ of prime degree $p$ cyclic fields; 
${\mathcal K}_2^s$ may be ${\mathcal K}_2^-$ or ${\mathcal K}_2^+$
then of course ${\mathcal K}_p^s = {\mathcal K}_p^+$ for $p>2$, and where
$\order {\mathcal F}_{p, f}^s = (p-1)^{N-1}$ for $N := \omega (f)$:

\begin{theorem}\label{thm}
Let $\varepsilon \in  ]0, 1 [$ be given. Under Hypothesis \ref{hypo}
in the family ${\mathcal K}_p^s$ of degree $p$ cyclic number fields 
ordered by increasing conductors $f$ (or discriminants $D = f^{p-1}$), 
the sequence of the successive maxima of:
$$C_{\varepsilon}(K) = \frac{H}{p^{N-1} \cdot (\sqrt{D}\,)^{\varepsilon}}, 
\ \ \hbox{$K \in {\mathcal K}_p^s$}, \ H := \order \Cl_K,\ N := \omega(f), $$ 

is infinite. 
\end{theorem}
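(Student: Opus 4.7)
The strategy I would follow is to combine the unconditional lower bound of Hypothesis~\ref{hypo} with the $p$-rank $\varepsilon$-conjecture for degree~$p$ cyclic fields (proven unconditionally in \cite[Theorem~2.5]{Gr3}), exploiting the fact that the denominator $p^{N-1}$ is a pure power of $p$ already controlled by the $p$-rank of $\Cl_K$.

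First, I would show that $p^{N-1}$ is uniformly bounded by a small power of $\sqrt{D}$ on $\mathcal{K}_p^s$. For a cyclic extension $K/\Q$ of prime degree~$p$ with $N$ ramified primes, the genus field $\mathcal{H}_{K/\Q}/K$ is an elementary abelian $p$-extension of degree $p^{N-1}$ (the compositum, over $K$, of the $N-1$ independent genus sub-extensions attached to the ramified primes), so $(\Z/p\Z)^{N-1}$ is a quotient of $\Cl_K$ and hence of $\Cl_K \otimes \F_p$. Combined with the $p$-rank $\varepsilon$-conjecture of \cite[Theorem~2.5]{Gr3}, this yields, for every $\eta' > 0$,
\[
p^{N-1} \;\leq\; \order\bigl(\Cl_K \otimes \F_p\bigr)
\;\leq\; \mathscr{C}_{p,\eta'} \cdot (\sqrt{D}\,)^{\eta'}
\qquad \text{for all } K \in \mathcal{K}_p^s.
\]

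Next, I would invoke Hypothesis~\ref{hypo}: for any $\eta \in \, ]0,1[$ it furnishes an infinite sequence $(K_i)_i \subset \mathcal{K}_p^s$ with $D_{K_i} \to \infty$ (hence with conductors $f_{K_i} \to \infty$) satisfying $H_{K_i} \geq (\sqrt{D_{K_i}}\,)^{1-\eta}$. Dividing one inequality by the other,
\[
C_{\varepsilon}(K_i) \;=\; \frac{H_{K_i}}{p^{N_i - 1} \cdot (\sqrt{D_{K_i}}\,)^{\varepsilon}}
\;\geq\; \frac{1}{\mathscr{C}_{p,\eta'}} \cdot (\sqrt{D_{K_i}}\,)^{1 - \eta - \eta' - \varepsilon}.
\]
Since $\varepsilon \in \, ]0,1[$, I can fix positive $\eta$ and $\eta'$ small enough that $1 - \eta - \eta' - \varepsilon > 0$, so that $C_{\varepsilon}(K_i) \to +\infty$.

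Once $C_{\varepsilon}$ is known to be unbounded along a sequence of fields whose conductors tend to infinity, the infiniteness of the successive maxima (as ordered by increasing $f$) follows formally: given any local maximum $Y_n$ attained at some conductor $f_n$, some $K$ of conductor $f > f_n$ taken from the unbounded sequence will satisfy $C_{\varepsilon}(K) > Y_n$, forcing the existence of the next local maximum $Y_{n+1}$, and an immediate induction produces infinitely many such maxima. The genuinely nontrivial ingredient is the uniform $p$-rank bound of the first step, but this is precisely what \cite[Theorem~2.5]{Gr3} supplies unconditionally for $\mathcal{K}_p^s$; after that, the proof reduces to elementary bookkeeping with exponents.
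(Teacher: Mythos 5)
Your argument is correct and has the same skeleton as the paper's proof: both combine the lower bound $H_{K_i}\geq(\sqrt{D_{K_i}}\,)^{1-\eta}$ from Hypothesis \ref{hypo} with an upper bound of the form $p^{N-1}\ll_{p}(\sqrt{D}\,)^{\eta'}$ valid for \emph{all} $K\in{\mathcal K}_p^s$, and then choose $\eta,\eta'$ so that the resulting exponent $1-\eta-\eta'-\varepsilon$ is positive. The difference lies in how the bound on $p^{N-1}$ is obtained. The paper invokes the counting estimate of \cite[\S\,2.3]{Gr3} directly: $p^{(p-1)(N-1)}\leq C_{p,\theta}\cdot(\sqrt{D_N}\,)^{\theta}$, where $D_N$ is the smallest discriminant in the family with $N$ ramified primes (built from the first $N$ primes $\equiv 1\pmod p$), together with $D_N<\vert D\vert$. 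You instead observe that genus theory makes $(\Z/p\Z)^{N-1}$ a quotient of $\Cl_K$, so that $p^{N-1}\leq\order(\Cl_K\otimes\F_p)$, and then quote the unconditional $p$-rank $\varepsilon$-theorem \cite[Theorem 2.5]{Gr3}. Your route is slicker to state and makes the needed inequality an immediate corollary of a named theorem; but be aware that it is not logically more economical, since the proof of \cite[Theorem 2.5]{Gr3} for degree $p$ cyclic fields rests on precisely the same counting estimate the paper uses here, so the two arguments coincide once the citation is unfolded. Everything else in your write-up (the passage from an unbounded subsequence with $f\to\infty$ to the infiniteness of the sequence of successive maxima) matches the paper and is fine.
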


\begin{proof} By assumption, for all $\eta \in ]0, 1[$, there exist an infinite 
sequence $(f_i)_{i \geq 1}$ and at least a degree $p$ cyclic fields $K_i$,
of conductor $f_i$, such~that:
$$C_{\varepsilon}(K_i) = \frac{\order \Cl_{K_i}}{p^{N_i-1}\! \cdot \!
(\sqrt{\vert D_i \vert}\,)^\varepsilon}\geq  \frac{1}{p^{N_i-1}}
(\sqrt{\vert D_i \vert}\,)^{1-\eta - \varepsilon},  N_i := \omega(f_i), 
\vert D_i \vert = f_i^{p-1}. $$ 

The claim consists in proving that the right member is unbounded, in 
other words that the sequence (which depends on the choice of $\eta$):
$$A_i^{(\eta)} := (1-\eta - \varepsilon) \cdot 
{\rm log}(\sqrt{\vert D_i \vert}\,)-(N_i-1) \cdot  {\rm log}(p)$$

is unbounded (which amounts to prove that the $N_i$ are not too large).
This is only a sufficient condition.
To simplify, we delete the index $i$, we fix $\eta$ and such an indice $i$, hence
the corresponding field $K$ of huge class number, and the
integers $N$, $f$, $D$; we put $A_i^{(\eta)} =: A$.

\smallskip
From \cite[\S\,2.3]{Gr3}, we have obtained the existence, for all $\theta>0$,
of a constant $C_{p,\theta}>0$ such that, whatever the field $K \in {\mathcal K}_p^s$:
$$p^{(p-1)(N-1)} \leq C_{p,\theta}\cdot (\sqrt{D_N}\,)^\theta,\ \, D_N=f_N^{p-1}, $$

where $f_N$ is the product of the $N$ first consecutive prime numbers congruent to
$1$ modulo $p$, so that $\vert D \vert > D_N$ since for the field $K$,
$\omega(D)=\omega(D_N)=N$. 

\smallskip
Which yields:
$$(N-1) \cdot {\rm log}(p) \leq \frac{{\rm log}(C_{p,\theta})}{p-1}
+ \frac{\theta \cdot {\rm log}(\sqrt {D_N}\,)}{p-1}. $$

Then:
\begin{equation}
\begin{aligned}
A \geq & (1-\eta - \varepsilon) \cdot {\rm log}(\sqrt{\vert D \vert}\,)
- \frac{{\rm log}(C_{p,\theta})}{p-1} - \frac{\theta \cdot {\rm log}(\sqrt {D_N}\,)}{p-1} \\
\geq & (1-\eta - \varepsilon) \cdot {\rm log}(\sqrt{\vert D \vert}\,)
- \frac{{\rm log}(C_{p,\theta})}{p-1} - \frac{\theta \cdot {\rm log}(\sqrt {\vert D \vert}\,)}{p-1} \\
\geq & \Big [ 1-\eta - \varepsilon - \frac{\theta}{p-1} \Big] \cdot {\rm log}(\sqrt {\vert D \vert}\,)
- \frac{{\rm log}(C_{p,\theta})}{p-1}.
\end{aligned}
\end{equation}

The parameters $\eta > 0$ and $\theta > 0$ may be chosen arbitrarily close to $0$.
Then as soon as $0 < \eta < 1- \varepsilon - \frac{\theta}{p-1}$, the corresponding
sequence $(A_i^{(\eta)})_i$ is unbounded, which yields the claim.
\end{proof}

\begin{remarks}{\rm
(i) It is more difficult to prove the same result for the
successive maxima of the means $C_ \varepsilon({\mathcal F}_{p, f}^s)$
(except for $p=2$) or $C'_ \varepsilon({\mathcal F}'^s_{p, f})$. 
However, since all the experiments show that the successive maxima are 
obtained with very small values of $N$, there is no doubt for the 
infiniteness of the corresponding sequences of maxima (perhaps 
for $0 < \varepsilon \ll 1$ since an $\varepsilon$ close to $0$
allows less restrictive values of $\theta$ and $\eta$ and easier 
numerical data).

\smallskip
(ii) The case $\varepsilon = 0$ makes sense for the study of the successive 
maxima of the function $C_{0}(K) := \frac{\order \Cl_K}{g_{K/\Q}^{} 
\cdot (\sqrt{\vert D_K \vert}\,)^0}$ equal to $\frac{H}{p^{N-1}}$ 
in the degree $p$ cyclic case, but there is no $0$-conjecture since 
the class number is unbounded in any infinite family ${\mathcal K}_d^s$. 
At the opposite case, for any $\eta > 0$, we know that
$\order \Cl_K \ll_{\eta} \cdot 
(\sqrt{\vert D_K \vert}\,)^{1+ \eta}$,
which will give some information (see \S\,\ref{eps=1}).

\smallskip
(iii) The inequality \eqref{c}, with $\varepsilon = c+\eta$, which is proved in 
some cases, and the relations \eqref{0} with Proposition \ref{00}, show that the 
problem of the strong $\varepsilon$-conjectures is only concerned with 
$\varepsilon \in ]0,1[$ (cf. Theorem \ref{thm}).}
\end{remarks}

\section{Study of the quadratic fields}
We shall first illustrate the property under consideration by means of the familly 
${\mathcal K}_2 = {\mathcal K}_2^- \cup {\mathcal K}_2^+$ of quadratic fields.

\subsection{Ordered list of discriminants}
Let $K \in {\mathcal K}_2^s$ identified by means of its signature and 
the absolute value $D$ of its discriminant $D_K$ (which is also its conductor $f_K$). 
We shall use first the case of imaginary quadratic fields since the class numbers are 
more rapidely computed by PARI/GP \cite{P}, which allows more important bounds
for the discriminants; then we shall see that the results are similar in the real case.

\smallskip
Any list of quadratic fields $K$ will be ordered by increasing $D$ as the following 
program does in the imaginary case, where ${\sf [bD, BD]}$ is the interval 
for ${\sf D}$, ${\sf H}$ is the class number of $K$ and ${\sf h = \frac{H}{2^{N-1}}}$,
${\sf N}$ being the number of ramified primes and ${\sf 2^{N-1}}$
the genus number (for the real case, see Section \ref{real}):

\smallskip
\footnotesize
\begin{verbatim}
{bD=1;BD=10^4;for(D=bD,BD,e2=valuation(D,2);d=D/2^e2;if(e2==1||e2>3||
core(d)!=d||(e2==0&Mod(-d,4)!=1)||(e2==2&Mod(-d,4)!=-1),next);
H=qfbclassno(-D);N=omega(D);h=H/2^(N-1);
print("D_K=",-D," H=",H," h=",h," N=",N))}

D_K=-3      H=1     h=1     N=1
D_K=-4      H=1     h=1     N=1
D_K=-7      H=1     h=1     N=1
D_K=-8      H=1     h=1     N=1
D_K=-11     H=1     h=1     N=1
D_K=-15     H=2     h=1     N=2
D_K=-19     H=1     h=1     N=1
(...)
D_K=-10000000003     H=10538     h=5269     N=2
D_K=-10000000004     H=40944     h=20472    N=2
D_K=-10000000007     H=95488     h=23872    N=3
D_K=-10000000011     H=20264     h=5066     N=3
D_K=-10000000015     H=39520     h=2470     N=5
D_K=-10000000019     H=39809     h=39809    N=1
D_K=-10000000020     H=42368     h=1324     N=6
(...)
D_K=-100000000000000000003     H=1442333424     h=360583356     N=3
D_K=-100000000000000000004     H=7297756288     h=456109768     N=5
D_K=-100000000000000000007     H=8734748716     h=2183687179    N=3
D_K=-100000000000000000011     H=2377640288     h=148602518     N=5
D_K=-100000000000000000015     H=5235364148     h=1308841037    N=3
D_K=-100000000000000000019     H=3301267440     h=1650633720    N=2
D_K=-100000000000000000020     H=3240597760     h=50634340      N=7
(...)
\end{verbatim}
\normalsize

As can be seen, the numbers $H$ and $h$ grow globally, but with many local 
decreases, the case of $N = \omega(D)$ being completely chaotic compared 
to the growth of $D$; so we intend to look at the successive extrema
(especially the successive maxima) of some 
function of these parameters.

\subsection{Conjecture \ref{genconj} for quadratic fields}\label{conj}
Let $K \in {\mathcal K}_2^s$ be any quadratic field. Let 
$G = {\rm Gal}(K/\Q) =: \langle \sigma \rangle$ and let $\Cl_K$, 
of order $H$, be the class group of $K$ (in the restricted sense for 
$K \in {\mathcal K}_2^+$).
We consider the quotient $\Cl_K/\Cl_K^G \simeq \Cl_K^{1-\sigma}$ 
(of order $h := \frac{H}{2^{N-1}}$).
Since ${\mathcal F}_{2, f}^s$ is reduced to a single field identified 
by its conductor $f =D := \vert D_K \vert$, we put:
\begin{equation}\label{defC}
C_{\varepsilon}(D) = \frac{h}{(\sqrt D\,)^\varepsilon}
 = \frac{H}{2^{N-1} \cdot (\sqrt{D}\,)^\varepsilon}, \ \, \varepsilon \in ]0, 1[.
\end{equation}

We shall compute the {\it successive maxima} of the function 
$C_{\varepsilon}(D)$ when $D$ increases.

\smallskip
The following conjecture, suggested by some computations in \cite{Gr3}, 
is the Conjecture \ref{genconj} for quadratic fields since $r(G)=R(G)=1$,
which will be strengthened from numerical experiments given in the next sections.

\begin{conjecture} \label{mainconj}
Let $\varepsilon \in ]0, 1[$. For each quadratic field $K$ (of a selected 
signature $s = \pm1$) of discriminant $D_K$, we denote by 
$N := \omega(D_K)=\omega(f_K)$ 
the number of ramified primes, by $H$ the class number in the restricted 
sense, and we put $D := \vert D_K \vert$. \par
Then, the successive maxima of $C_{\varepsilon}(D) := \ds
\frac{H}{2^{N-1} \cdot (\sqrt{D}\,)^\varepsilon}$, as $D$ 
increases from its minimum, only occur for prime 
conductors $f_K =  \ell \equiv s \pmod 4$
(i.e., $N=1$, $H$ odd), except possibly for a finite number. 
These properties do not depend on the choice of $\varepsilon$.
\end{conjecture}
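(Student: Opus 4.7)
The conjecture asserts that successive maxima of $C_\varepsilon(D) = H/(2^{N-1}(\sqrt D)^\varepsilon)$ occur only at prime conductors, with at most finitely many exceptions. The arithmetic intuition is immediate: at a prime discriminant one has $N=1$, hence $2^{N-1}=1$, whereas a composite conductor with $N$ ramified primes carries the genus penalty $2^{N-1}\geq 2$. I would therefore split the argument into a lower-bound part (producing many ``good'' prime conductors) and an upper-bound part (dominating all composite conductors of comparable size).

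\textbf{Step 1 (prime conductors produce arbitrarily large maxima).} I would first sharpen Proposition \ref{00} to the subfamily of prime conductors $\ell \equiv s \pmod 4$. The Ankeny--Brauer--Chowla construction, restricted to prime discriminants (a positive-density subset of primes in an arithmetic progression by Dirichlet), should supply, under GRH, infinitely many primes $\ell_i$ with $H(\ell_i) \geq \ell_i^{(1-\eta)/2}$. Combined with the argument of Theorem \ref{thm} specialised to $p=2$, this yields an infinite sequence of prime conductors $\ell_i$ realising arbitrarily large values of $C_\varepsilon$, hence forming a cofinal subsequence of the successive maxima.

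\textbf{Step 2 (composite conductors are dominated).} For composite $D$ with $N \geq 2$, I would use the Siegel--Landau estimate $H \leq \mathscr C \sqrt D \log(\sqrt D)$ from \eqref{0}(i) to obtain
\[
C_\varepsilon(D) \leq \frac{\mathscr C \sqrt D \log(\sqrt D)}{2^{N-1} D^{\varepsilon/2}} = \frac{\mathscr C \log(\sqrt D)}{2^{N-1}} \cdot D^{(1-\varepsilon)/2}.
\]
Given a running maximum $M_n$ realised at a prime conductor $\ell_n$ from Step 1, a composite $D$ in the window $(\ell_n, \ell_{n+1})$ can surpass $M_n$ only if $D^{(1-\varepsilon)/2}\log D > 2^{N-1} M_n / \mathscr C$; since the prime maxima grow like $\ell_n^{(1-\eta-\varepsilon)/2}$ with $\eta$ arbitrarily small, the window in which any given composite $D$ could contribute has controlled size relative to $\ell_n$.

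\textbf{Main obstacle.} The heart of the argument, and where it becomes essentially open, is the \emph{interlacing} question: between two consecutive ``large'' prime conductors $\ell_n < \ell_{n+1}$ from Step 1, one must exhibit enough well-spaced primes with class number $\gg \ell^{(1-\eta)/2}$ to forbid every composite $D$ in the intervening range from creating a new maximum. This amounts to an effective density statement for prime quadratic discriminants with exceptionally large class number in short intervals, which seems beyond current techniques even under GRH. Without such an input one can at best hope to exclude all but a \emph{sparse set} (zero density) of composite exceptions, falling short of the ``finite'' exception clause in Conjecture \ref{mainconj}. A complete proof therefore appears to await genuine progress on Cohen--Lenstra--type heuristics restricted to prime discriminants, or on effective lower bounds of $L(1,\chi)$ for quadratic characters of prime conductor.
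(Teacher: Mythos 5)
The statement you are addressing is Conjecture \ref{mainconj}: the paper offers \emph{no proof} of it. What the paper actually establishes is only Theorem \ref{thm} — that, under Hypothesis \ref{hypo}, the sequence of successive maxima of $C_\varepsilon$ is infinite — together with extensive PARI computations (Tables I and III) in which every observed maximum after possibly one exception ($D_K=136$ in the real case) has $N=1$. The localisation of the maxima at prime conductors is left entirely as a conjecture, explicitly motivated by the data and by the heuristic that a large genus number $2^{N-1}$ penalises composite conductors. So your proposal cannot be measured against an argument in the paper; it can only be assessed on its own terms, and on those terms it is (as you yourself concede) not a proof.

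Beyond the honest admission in your ``Main obstacle'' paragraph, there is a concrete defect in Step 2 worth naming: the Siegel--Landau bound gives $C_\varepsilon(D)\leq {\mathscr C}\,2^{-(N-1)}D^{(1-\varepsilon)/2}\log(\sqrt D\,)$ for composite $D$, while Step 1 only guarantees prime maxima of size $\ell^{(1-\eta-\varepsilon)/2}$. Since $(1-\varepsilon)/2>(1-\eta-\varepsilon)/2$, the upper bound for composites at a given size \emph{exceeds} the guaranteed lower bound at nearby primes by a factor $D^{\eta/2}\log D$, so nothing is dominated: no window is ever closed. The genus factor contributes only a constant $2^{N-1}\geq 2$ for $N=2$, whereas the Littlewood extremal behaviour \eqref{wood} is available to composite discriminants as well as prime ones (the paper's Table II for $H/(\sqrt D)^\varepsilon$ shows maxima at $N=2,3,4$). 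The entire content of Conjecture \ref{mainconj} is therefore the assertion that the extremal class numbers over prime discriminants keep pace, up to the factor $2^{N-1}$, with those over composite discriminants in every range — precisely the interlacing statement you identify as open. Your Step 1 also silently assumes that the Ankeny--Brauer--Chowla construction can be carried out within prime discriminants $\ell\equiv s\pmod 4$; this is plausible but is not in Proposition \ref{00} as stated and is not supplied by the paper. In short: the approach is a reasonable reduction of the conjecture to two hard analytic inputs, but neither input is available, and the paper itself claims nothing more than numerical evidence here.
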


The sequence of successive maxima is infinite from Theorem \ref{thm}, 
and only depends of $\varepsilon$.

\smallskip
For the imaginary quadratic fields, the numerical experiments show 
that all the successive maxima are given by prime discriminants.

\medskip
Denote by $s\, \ell_i \equiv 1 \pmod 4$ this conjectural sequence 
of prime discriminants giving these successive maxima; then:
\begin{equation}\label{li}
C_{\varepsilon}(\ell_i) = \frac{H_i}{ (\sqrt{\ell_i}\,)^\varepsilon}
\hbox{\ \  for all $i$, where $H_i = \order \Cl_{\Q(\sqrt{s\,\ell_i}\,)}$}. 
\end{equation}

As opposed to the conjectural inequality \eqref{000}
(except a set of fields of zero density), the fields
$K_i := \Q(\sqrt{s\,\ell_i}\,)$ would perhaps show some examples for the 
minorations given by Proposition \ref{00}

\subsection{Some heuristics for imaginary quadratic fields}
Using the program \ref{P1}, we start by giving some insights about
the behaviour of the class numbers in various situations.

\subsubsection{Large genus numbers}
We shall give some calculations showing that, contrary to the common sense, 
a great lot of ramified primes in $K$ (whence a large genus number) do not give 
{\it systematically} a large class number $H$ (see a numerical example in the next
subsection). 

\smallskip
For instance, we may use the family of fields 
$K=\Q(\sqrt{-2 \cdot 3 \cdot 5 \cdot 7 \ldots q_n\,})$,
using the sequence of the $n$ first odd prime numbers
or any similar one as $\Q(\sqrt{-q_{n_0}^{} \ldots q_{n_0+n}^{}\,})$. 
The following results are very convincing ($D= \vert D_K \vert$):

\footnotesize
\smallskip
\begin{verbatim}
D=[2,3] H=1
D=[2,3;3] H=2
D=[2,3;3;5] H=4
D=[2,3;3;5;7] H=8
D=[2,3;3;5;7;11] H=32
D=[2,3;3;5;7;11;13] H=128
D=[2,3;3;5;7;11;13;17] H=448
D=[2,3;3;5;7;11;13;17;19] H=2048
D=[2,3;3;5;7;11;13;17;19;23] H=10240
D=[2,3;3;5;7;11;13;17;19;23;29] H=44544
D=[2,3;3;5;7;11;13;17;19;23;29;31] H=264192
D=[2,3;3;5;7;11;13;17;19;23;29;31;37] H=1908736
D=[2,3;3;5;7;11;13;17;19;23;29;31;37;41] H=11059200
D=[2,3;3;5;7;11;13;17;19;23;29;31;37;41;43] H=76988416
D=[2,3;3;5;7;11;13;17;19;23;29;31;37;41;43;47] H=436731904
D=[2,3;3;5;7;11;13;17;19;23;29;31;37;41;43;47;53] H=3745513472
(...)
D=[3]    H=1
D=[3;5]    H=2
D=[2,2;3;5;7] H=8
D=[3;5;7;11]    H=8
D=[3;5;7;11;13]    H=96
D=[3;5;7;11;13;17]    H=256
D=[2,2;3;5;7;11;13;17;19] H=1536
D=[3;5;7;11;13;17;19;23]    H=2688
D=[3;5;7;11;13;17;19;23;29]    H=39424
D=[2,2;3;5;7;11;13;17;19;23;29;31]  H=193536
D=[2,2;3;5;7;11;13;17;19;23;29;31;37]  H=1210368
D=[2,2;3;5;7;11;13;17;19;23;29;31;37;41] H=7471104
D=[3;5;7;11;13;17;19;23;29;31;37;41;43]    H=57925632
D=[2,2;3;5;7;11;13;17;19;23;29;31;37;41;43;47]  H=364101632
D=[2,2;3;5;7;11;13;17;19;23;29;31;37;41;43;47;53] H=2738487296
D=[3;5;7;11;13;17;19;23;29;31;37;41;43;47;53;59]    H=19836665856
(...)
\end{verbatim}

\normalsize

\subsubsection{Successive maxima of $\frac{H}{2^{N-1} \cdot 
(\sqrt{\vert D_K \vert}\,)^{0.05}}$ (see Table I, Appendix \ref{Table1})}

As expected, the computation of the successive maxima
of $\frac{H}{2^{N-1} \cdot (\sqrt{\vert D_K \vert}\,)^{0.05}}$ in an interval 
containing such a huge discriminant gives much larger non-genus class 
numbers as the following computation shows with the discriminant $D_0$:
$$D_0=-892371480=-2^3 \cdot 3 \cdot 5 \cdot 7 \cdot 11 
\cdot 13 \cdot 17 \cdot 19 \cdot 23, $$

for which $\frac{H}{2^{N-1} \cdot (\sqrt{\vert D_K \vert}\,)^{0.05}}
=23.89441208$ while the order of magnitude of the successive maxima
for such order of magnitude of discriminants is $39326.\cdots$; it is clear 
that $H=10240$ for $D_K=-892371480$, is far to give a local maximum
(see line ${}^{***}$):

\footnotesize
\begin{verbatim}
eps=0.05
D_K=-891819431   H=64723 h=64723   N=1   C=38663.54897528134436
D_K=-891972311   H=65807 h=65807   N=1   C=39310.92910296295340
D_K=-892371479   H=65833 h=65833   N=1   C=39326.02076918814615
D_K=-892371480   H=10240 h=40      N=9   C=23.89441208396179319 ***
D_K=-892909079   H=66789 h=66789   N=1   C=39896.49651358522089
D_K=-898496759   H=68083 h=68083   N=1   C=40663.12685277796768
D_K=-900119351   H=68387 h=68387   N=1   C=40842.85100395781405
\end{verbatim}

\normalsize
Similarly, for the last discriminant $-961380175077106319535$ above, with
$H=19836665856$, $\frac{H}{2^{N-1} \cdot (\sqrt{\vert D_K \vert}\,)^{0.05}}
=180903.24774117$ for a local maximum $>23715549673$ at this place:

\smallskip
\footnotesize
\begin{verbatim}
D_K=-961380175077106313759 H=73527737200 N=5 C=21972467046.501154082
D_K=-961380175077106315439 H=74348146642 N=2 C=22217631934.140092600
D_K=-961380175077106319279 H=76863968144 N=2 C=22969440804.542458758
D_K=-961380175077106319519 H=79360715400 N=3 C=23715549673.045788420
\end{verbatim}
\normalsize

\subsubsection{Successive maxima of $\frac{H}{(\sqrt{D}\,)^\varepsilon}$
 (see Table II, Appendix \ref{Table2})}
In terms of successive maxima of $\frac{H}{(\sqrt{D}\,)^\varepsilon}$, we 
observe that they occur for fields $K$ such $N \in \{1, 2, 3\}$ for 
$D \leq153116519$ and that for very large discriminants this set is 
$\{1, 2, 3, 4\}$ (but mainly $1, 2$). We give below some excerpts 
of the whole table for $\varepsilon = 0.05$:

\footnotesize
\smallskip
\begin{verbatim}
{eps=0.05;bD=1;BD=10^9;Cm=0;for(D=bD,BD,e2=valuation(D,2);d=D/2^e2;
if(e2==1||e2>3||core(d)!=d||(e2==0&Mod(-d,4)!=1)||(e2==2&Mod(-d,4)!=-1),
next);H=qfbclassno(-D);C=H/(D^(eps/2));if(C>Cm,Cm=C;
print("D_K=",-D," H=",H," N=",omega(D)," C=",precision(C,1))))}

eps=0.05
D_K=-3            H=1      N=1    C=0.972908434869468710
D_K=-15           H=2      N=2    C=1.869079241783001633
D_K=-23           H=3      N=1    C=2.773818617890694607
D_K=-39           H=4      N=2    C=3.649920257029810563
D_K=-47           H=5      N=1    C=4.541167885124564221
D_K=-71           H=7      N=1    C=6.292403751297605636
D_K=-95           H=8      N=2    C=7.139156409132720178
(...)
D_K=-1036031      H=1908   N=2    C=1349.565761315970158
D_K=-1062311      H=1927   N=1    C=1362.151530472142189
D_K=-1086791      H=1965   N=1    C=1388.221940001361239
D_K=-1101239      H=1970   N=2    C=1391.294878906636963
D_K=-1127759      H=1982   N=2    C=1398.937276305459193
D_K=-1138199      H=1984   N=3    C=1400.026360256248040
D_K=-1139519      H=2027   N=1    C=1430.328227182419060
(...)
D_K=-100682039    H=22178  N=2    C=13990.99428148920046
D_K=-102628679    H=22235  N=1    C=14020.23894496263475
D_K=-104716751    H=22315  N=1    C=14063.59940393199907
D_K=-105734879    H=22530  N=2    C=14195.66476857999654
D_K=-106105271    H=23077  N=1    C=14539.04651797868699
D_K=-110487599    H=23275  N=1    C=14648.96196034653848
D_K=-110869391    H=23356  N=3    C=14698.67458157540117
(...)
D_K=-10000000000000391   H=206713870   N=2    C=32761940.53502909525
D_K=-10000000000013399   H=213240384   N=2    C=33796323.29593711028
D_K=-10000000000024271   H=221061984   N=4    C=35035963.35535050720
D_K=-10000000000026959   H=227089538   N=2    C=35991266.28552908299
D_K=-10000000000070519   H=231231712   N=4    C=36647756.62297995465
D_K=-10000000000123559   H=235020092   N=2    C=37248174.39021544804
D_K=-10000000000141511   H=245552727   N=1    C=38917484.54122868952
D_K=-10000000001180279   H=251465706   N=2    C=39854628.55744758011
(...)
\end{verbatim}

\normalsize
\subsubsection{Decomposition of the prime nuumbers}
We see that (except for few cases at the bigining of the algorithm) 
the discriminants are such $D_K \equiv 1 \pmod {8}$, which corresponds
to the splitting of $2$ in~$K$; similarly, the small unramified prime numbers
$3, 5, 7, 11,\ldots$ splits in $K$. For $D_K=-1432580159$, 
all primes $p$, $2 \leq p \leq 67$, split. For $D_K=-2017348511$
all primes $p$, $2 \leq p \leq 107$, split, except $p=79$ (Table I, Appendix \ref{Table1}).

\smallskip
This is not too surprising since the class of a prime ideal of small norm $p$ 
is of hight order since we must have a solution to $x^2+D\cdot y^2 = p^n$, 
which yields approximatively $n > \frac{D}{{\rm log}(p)}$. In the
references \cite{Da, DaKM,Du,GS,Lam,L0,L1,L2,MW,R}, using complex
$L$-functions, it is the process to get large class numbers
$H=\ds \frac{L(1,\chi)}{\pi} \sqrt{D}$, where $\chi$ is the 
Dirichlet character modulo $D$. This is also the principle of the paper
of Ellenberg--Venkatesh for the $p$-rank $\varepsilon$-conjecture
(see \cite[\S\,1.1]{EV}).

\section{Programs and tables for imaginary quadratic fields}

The two following programs compute the successive maxima of 
$\frac{H}{2^{N-1} \cdot (\sqrt{D}\,)^\varepsilon}$. 

Program I works
with a given $\varepsilon$ and the computations of $N, H$ as 
the discriminant $D$ increases arbitrarily. Program II computes, 
once for all, a list of triples $(D, N, H)$; then, for comparison 
of the results, we can use this list for a great lot of values of 
$\varepsilon$ for faster computations of the successive maxima.

\subsection{Successive maxima of $\frac{H}{2^{N-1} \cdot (\sqrt{D}\,)^\varepsilon}$}

\subsubsection{Program I -- Successive maxima of 
$\frac{H}{2^{N-1} \cdot (\sqrt{D}\,)^\varepsilon}$ with fixed $\varepsilon $}\label{P1}
One must choose $\varepsilon$ in ${\sf eps}$ and the interval 
${\sf [bD, BD]}$ for ${\sf D}$. The successive maxima of 
$C_\varepsilon(D) = \frac{H}{2^{N-1} \cdot (\sqrt{D}\,)^\varepsilon}$ 
are given in ${\sf C}$ with the corresponding values of $H$ (in ${\sf H}$),
$h$ (in ${\sf h}$) and $N$ (in ${\sf N}$). We write some excerpts of the 
table with various $\varepsilon$, $0<\varepsilon \ll 1$, before giving a more 
complete table in \S\,\ref{table1} and Appendix \ref{Table1} 
(all the missing results below are with $N=1$).

\footnotesize
\medskip
\begin{verbatim}
{eps=0.05;bD=1;BD=5*10^7;Cm=0;for(D=bD,BD,e2=valuation(D,2);d=D/2^e2;
if(e2==1||e2>3||core(d)!=d||(e2==0&Mod(-d,4)!=1)||(e2==2&Mod(-d,4)!=-1),
next);H=qfbclassno(-D);N=omega(D);h=H/2^(N-1);C=h/(D^(eps/2));if(C>Cm,Cm=C;
print("D_K=",-D," H=",H," h=",h," N=",N," C=",precision(C,1))))}
eps=0.05
D_K=-3         H=1       h=1         N=1    C=0.972908434869468710
D_K=-23        H=3       h=3         N=1    C=2.773818617890694607
D_K=-47        H=5       h=5         N=1    C=4.541167885124564221
D_K=-71        H=7       h=7         N=1    C=6.292403751297605636
D_K=-167       H=11      h=11        N=1    C=9.678872599268429560
D_K=-191       H=13      h=13        N=1    C=11.40033250135200530
(...)
D_K=-1006799   H=1853    h=1853      N=1    C=1311.601334671650505
D_K=-1032071   H=1857    h=1857      N=1    C=1313.618222552253122
D_K=-1062311   H=1927    h=1927      N=1    C=1362.151530472142189
D_K=-1086791   H=1965    h=1965      N=1    C=1388.221940001361239
D_K=-1139519   H=2027    h=2027      N=1    C=1430.328227182419060
D_K=-1190591   H=2051    h=2051      N=1    C=1445.678077927788222
(...)
D_K=-10057031  H=6425    h=6425      N=1    C=4293.499209586390154
D_K=-10289639  H=6563    h=6563      N=1    C=4383.211236938274762
D_K=-10616759  H=6583    h=6583      N=1    C=4393.130011813243966
D_K=-10865279  H=6725    h=6725      N=1    C=4485.297628751901855
D_K=-10984991  H=6767    h=6767      N=1    C=4512.073690323125874
D_K=-11101151  H=6777    h=6777      N=1    C=4517.553301414761614
(...)
D_K=-30706079  H=12017   h=12017     N=1    C=7809.360152349024611
D_K=-32936879  H=12151   h=12151     N=1    C=7882.608544620325792
D_K=-34103471  H=12285   h=12285     N=1    C=7962.605439538764212
D_K=-34867271  H=12531   h=12531     N=1    C=8117.555770269597220
D_K=-35377319  H=12549   h=12549     N=1    C=8126.265293030949160
D_K=-36098039  H=12673   h=12673     N=1    C=8202.426446333352781
(...)
D_K=-68514599  H=17485   h=17485     N=1    C=11137.07280268520213
D_K=-68621159  H=17763   h=17763     N=1    C=11313.70536648692712
D_K=-69671159  H=18455   h=18455     N=1    C=11749.99613226375525
D_K=-75038519  H=18611   h=18611     N=1    C=11827.35419243076771
D_K=-75865919  H=18653   h=18653     N=1    C=11850.79599797629443
D_K=-76708271  H=19191   h=19191     N=1    C=12189.23782258446658
(...)
\end{verbatim}
\normalsize

Taking intervals with huge bounds, still with ${\sf Cm=0}$, it is normal that the first 
maximum is not necessarily large enough to initiate the process which quickly 
stabilizes with prime discriminants (i.e., $N=1$):

\footnotesize
\begin{verbatim}
D_K=-100000000003    H=31057     h=31057     N=1    C=16487.67818446943949
D_K=-100000000004    H=150192    h=37548     N=3    C=19933.64911196521700
D_K=-100000000007    H=294590    h=147295    N=2    C=78196.62421286658525
D_K=-100000000019    H=160731    h=160731    N=1    C=85329.58760528567330
D_K=-100000000091    H=225785    h=225785    N=1    C=119865.7442379667594
D_K=-100000000103    H=350895    h=350895    N=1    C=186284.6970536358269
D_K=-100000000319    H=405637    h=405637    N=1    C=215346.3732873515489
D_K=-100000001111    H=520065    h=520065    N=1    C=276094.4184123037331
(...)
D_K=-100000000000000000391 H=12650126133 h=12650126133 N=1 C=4000321126.86
D_K=-100000000000000000559 H=13816100521 h=13816100521 N=1 C=4369034602.82
D_K=-100000000000000004159 H=15675687007 h=15675687007 N=1 C=4957087483.00
D_K=-100000000000000004519 H=17188889951 h=17188889951 N=1 C=5435604269.51
D_K=-100000000000000005071 H=18354412823 h=18354412823 N=1 C=5804174963.56
D_K=-100000000000000010039 H=21539854655 h=21539854655 N=1 C=6811500117.88
D_K=-100000000000000375799 H=22001805625 h=22001805625 N=1 C=6957581841.13
D_K=-100000000000000940831 H=22578025429 h=22578025429 N=1 C=7139798542.48
(...)
\end{verbatim}
\normalsize

\smallskip
There are huge intervals between two successive maxima; this probably means
that the density of these successive maxima is zero. Perhaps this comes easily from 
analytical computations as those given in \cite{Da,DaKM,Du,Lam} and others.

\subsubsection{Distribution of the values of $N$ for the successive maxima of 
$\frac{H}{2^{N-1}(\sqrt D\,)^\varepsilon}$}\label{table1}

We give the statistical distribution of the values of $N$ for the successive maxima of 
$C_\varepsilon(D) = \frac{H}{2^{N-1}(\sqrt D\,)^\varepsilon}$ with $\varepsilon =0.02$
(where $N_i$ denotes the number of $N=i$ at each new maximum; $N_D$ is the total 
number of consider discriminants); see Appendix \ref{Table1}:

\footnotesize 
\smallskip
\begin{verbatim}
{eps=0.02;bD=1;BD=10^12;ND=0;N1=0;N2=0;N3=0;N4=0;N5=0;N6=0;
Cm=0;for(D=bD,BD,e2=valuation(D,2);d=D/2^e2;
if(e2==1||e2>3||core(d)!=d||(e2==0&Mod(-d,4)!=1)||(e2==2&Mod(-d,4)!=-1),
next);ND=ND+1;H=qfbclassno(-D);N=omega(D);h=H/2^(N-1);C=h/D^(eps/2);
if(C>Cm,Cm=C;if(N==1,N1=N1+1);if(N==2,N2=N2+1);if(N>=3,N3=N3+1);print();
print("D_K=",-D," H=",H," h=",h," N=",N," C=",precision(C,1));
print("ND=",ND," N1=",N1," N2=",N2," N3=",N3)))}
\end{verbatim}
\normalsize

\subsubsection{Computations with $\varepsilon \approx 1$}\label{eps=1}
The case $\varepsilon = 1$ or $\varepsilon = 1\pm \eta$ 
(for any $\eta>0$ close to $0$ as we recalled in \S\,\ref{strong}) has 
some interest because of some known upper and lower bounds for $H$ 
with classical inequalities in ${\mathcal K}_2^-$:

$\ds \order \Cl_K \leq  {\mathscr C}_{\eta}\cdot (\sqrt {D}\,)^{1+ \eta}$,
for all $D$ (inequality \eqref{0}\,(ii)), for some constant ${\mathscr C}_{\eta}$,
and the inverse inequality
$\order \Cl_K \geq  (\sqrt {D}\,)^{1-\eta}$,
for infinitely many $D$ (Proposition \ref{00}), 
giving (in some sense with $\varepsilon = 1+\eta$):
$C_{1+\eta}(D) \leq \frac{{\mathscr C}_{\eta}}{2^{N-1}}$,
for all $D$, and proving that $\ds \liminf_D (C_{1+\eta}(D))=0$ 
and that $\ds \limsup_D(C_{1+\eta}(D))<\infty$.

A priori, the second inequality is more interesting 
when $N=1$ (then $D_K= s\,\ell \equiv 1 \pmod 4$), 
yielding $C_{1+\eta}(\ell) \gg_{\eta} 0$.
This gives some information regarding the conjectural sequence 
$\ell_i$ defined by \eqref{li}.

\footnotesize
\begin{verbatim}
{eps=1;bD=1;BD=10^8;Cm=0;for(D=bD,BD,e2=valuation(D,2);d=D/2^e2;
if(e2==1||e2>3||core(d)!=d||(e2==0&Mod(-d,4)!=1)||(e2==2&Mod(-d,4)!=-1),
next);H=qfbclassno(-D);N=omega(D);h=H/2^(N-1);C=h/(D^(eps/2));if(C>Cm,Cm=C;
print("D_K=",-D," H=",H," h=",h," N=",N," C=",precision(C,1))))}

eps=1
D_K=-3          H=1        h=1        N=1    C=0.577350269189625764
D_K=-23         H=3        h=3        N=1    C=0.625543242171224288
D_K=-47         H=5        h=5        N=1    C=0.729324957489472779
D_K=-71         H=7        h=7        N=1    C=0.830747160735697329
D_K=-167        H=11       h=11       N=1    C=0.851205555787550539
D_K=-191        H=13       h=13       N=1    C=0.940646986880148131
D_K=-239        H=15       h=15       N=1    C=0.970269341029726350
D_K=-311        H=19       h=19       N=1    C=1.077391156535111118
D_K=-479        H=25       h=25       N=1    C=1.142279155962384611
D_K=-719        H=31       h=31       N=1    C=1.156104917973712355
D_K=-1151       H=41       h=41       N=1    C=1.208498720419083129
D_K=-1319       H=45       h=45       N=1    C=1.239053663199498814
D_K=-1511       H=49       h=49       N=1    C=1.260560945801805027
D_K=-1559       H=51       h=51       N=1    C=1.291656751946608433
D_K=-2351       H=63       h=63       N=1    C=1.299314349827718283
D_K=-2999       H=73       h=73       N=1    C=1.333013744070415380
D_K=-3671       H=81       h=81       N=1    C=1.336881232618676007
D_K=-5711       H=109      h=109      N=1    C=1.442349199551804946
D_K=-6551       H=117      h=117      N=1    C=1.445546485075073189
D_K=-9239       H=139      h=139      N=1    C=1.446113287777516307
D_K=-10391      H=153      h=153      N=1    C=1.500938018707737875
D_K=-14951      H=185      h=185      N=1    C=1.512991916312454831
D_K=-15791      H=195      h=195      N=1    C=1.551778729394620610
D_K=-18191      H=213      h=213      N=1    C=1.579251567061564281
D_K=-31391      H=289      h=289      N=1    C=1.631155133694937910
D_K=-38639      H=325      h=325      N=1    C=1.653371403795025953
D_K=-63839      H=423      h=423      N=1    C=1.674161419982429269
D_K=-88919      H=505      h=505      N=1    C=1.693534674808400201
D_K=-95471      H=533      h=533      N=1    C=1.725009428818167194
D_K=-147671     H=667      h=667      N=1    C=1.735714238040899734
D_K=-191231     H=761      h=761      N=1    C=1.740225421963015830
D_K=-250799     H=875      h=875      N=1    C=1.747210185407398438
D_K=-284231     H=943      h=943      N=1    C=1.768788765798006676
D_K=-289511     H=963      h=963      N=1    C=1.789755692477908166
D_K=-312311     H=1001     h=1001     N=1    C=1.791184972640468990
D_K=-366791     H=1121     h=1121     N=1    C=1.850956776350963554
D_K=-514751     H=1347     h=1347     N=1    C=1.877452665000664534
D_K=-628319     H=1495     h=1495     N=1    C=1.886040855881855214
D_K=-701399     H=1581     h=1581     N=1    C=1.887770950411945618
D_K=-819719     H=1725     h=1725     N=1    C=1.905270304565445333
D_K=-1412759    H=2311     h=2311     N=1    C=1.944311765270786403
D_K=-2470079    H=3079     h=3079     N=1    C=1.959089455884703822
D_K=-3068831    H=3485     h=3485     N=1    C=1.989373351795705735
D_K=-3190151    H=3593     h=3593     N=1    C=2.011646181366134258
D_K=-4305479    H=4227     h=4227     N=1    C=2.037142972901196004
D_K=-10289639   H=6563     h=6563     N=1    C=2.045984480430942894
D_K=-11935871   H=7165     h=7165     N=1    C=2.073906326984584401
D_K=-12537719   H=7457     h=7457     N=1    C=2.105983077270287202
D_K=-16368959   H=8565     h=8565     N=1    C=2.116980398599532524
D_K=-29036999   H=11421    h=11421    N=1    C=2.119474846049997785
D_K=-29858111   H=11885    h=11885    N=1    C=2.175043874892598601
D_K=-48796439   H=15195    h=15195    N=1    C=2.175237288881570956
D_K=-58057991   H=16631    h=16631    N=1    C=2.182666393650392413
D_K=-69671159   H=18455    h=18455    N=1    C=2.210993836560839743
D_K=-82636319   H=20299    h=20299    N=1    C=2.233002255956684257
D_K=-106105271  H=23077    h=23077    N=1    C=2.240324202281881086
(...)
\end{verbatim}

\normalsize
For $\varepsilon=1.1$ we have found the sequence of prime discriminants:

\footnotesize
\begin{verbatim}
D_K=-3        H=1     h=1     N=1    C=0.5464913722529576510
D_K=-47       H=5     h=5     N=1    C=0.6016115735959143380
D_K=-71       H=7     h=7     N=1    C=0.6712834625402534319
D_K=-191      H=13    h=13    N=1    C=0.7233941632706684037
D_K=-239      H=15    h=15    N=1    C=0.7378573598045812696
D_K=-311      H=19    h=19    N=1    C=0.8086030720739909916
D_K=-479      H=25    h=25    N=1    C=0.8389874728227036802
D_K=-1151     H=41    h=41    N=1    C=0.8495568162708291688
D_K=-1319     H=45    h=45    N=1    C=0.8651230358188882253
D_K=-1511     H=49    h=49    N=1    C=0.8741795137245637057
D_K=-1559     H=51    h=51    N=1    C=0.8943444427816406492
D_K=-5711     H=109   h=109   N=1    C=0.9359115563608913749
D_K=-10391    H=153   h=153   N=1    C=0.9452134463623177874
D_K=-15791    H=195   h=195   N=1    C=0.9569942159441953143
D_K=-18191    H=213   h=213   N=1    C=0.9670712843980043869
D_K=-31391    H=289   h=289   N=1    C=0.9719748099741753952
D_K=-38639    H=325   h=325   N=1    C=0.9750325810206494251
D_K=-366791   H=1121  h=1121  N=1    C=0.9753833631207420025
\end{verbatim}

\normalsize
For $\varepsilon=1.2$ one only finds a sequence of 7 solutions in the selected
interval:

\footnotesize
\begin{verbatim}
D_K=-3     H=1   h=1    N=1    C=0.5172818579717865581
D_K=-71    H=7   h=7    N=1    C=0.5424291630211148068
D_K=-191   H=13  h=13   N=1    C=0.5563182817283039269
D_K=-239   H=15  h=15   N=1    C=0.5611158267043577307
D_K=-311   H=19  h=19   N=1    C=0.6068723733265467012
D_K=-479   H=25  h=25   N=1    C=0.6162241303969013829
D_K=-1559  H=51  h=51   N=1    C=0.6192449976582213274
\end{verbatim}

\normalsize
\smallskip
For $\varepsilon=1.25$, the successive maxima obtained is:

\footnotesize
\smallskip
\begin{verbatim}
D_K=-3     H=1   h=1    N=1    C=0.5032678828257016665
D_K=-311   H=19  h=19   N=1    C=0.5257488819694962654
D_K=-479   H=25  h=25   N=1    C=0.5281172008961688794
\end{verbatim}

\normalsize
But for $\varepsilon=1.3$ one gets an empty sequence in the interval $[3, 10^8]$
(i.e., the initialization ${\sf D_K=-3,\  H=1,\, h=1,\, N=1,\, C=0.4896335682000245806}$).
After some adjustments giving the limit value ${\sf eps \approx 1.2690005098}$,
we have the same property (still in the interval $[3, 10^8]$).

\subsubsection{Program II -- Successive maxima of 
$\frac{H}{2^{N-1} \cdot (\sqrt{D}\,)^\varepsilon}$ with variable $\varepsilon$}
Now we intend to vary $\varepsilon \in ]0,1[$; the program computes first the list of 
all discriminants in the selected interval, with the values of ${\sf N}$ and ${\sf H}$
and builts the corresponding lists ${\sf LD, LN, LH}$:

\footnotesize
\smallskip
\begin{verbatim}
{LD=List;LN=List;LH=List;NL=0;bD=1;BD=10^6;
for(D=bD,BD,e2=valuation(D,2);d=D/2^e2;if(e2==1||e2>3||core(d)!=d
||(e2==0&Mod(-d,4)!=1)||(e2==2&Mod(-d,4)!=-1),next);NL=NL+1;
listput(LD,D);N=omega(D);listput(LN,N);H=qfbclassno(-D);listput(LH,H))}
\end{verbatim}

\normalsize
\smallskip
Once we have built the lists ${\sf LD, LN, LH}$ of the ${\sf NL}$ discriminants 
${\sf D \in [bD, BD]}$, the numbers ${\sf N}$ of ramified primes and the class 
numbers ${\sf H}$, respectively, one uses the following instruction with adequate 
choices of $\varepsilon$, which is very fast (the program only writes the solutions
with $N>1$, if any):

\footnotesize
\smallskip
\begin{verbatim}
{for(k=0,100,eps=k*0.0005;print(eps);Cm=0;for(j=1,NL,D=LD[j];N=LN[j];H=LH[j];
C=H/(2^(N-1)*(D^(eps/2)));if(C>Cm,Cm=C;if(N>1,h=H/2^(N-1);
print("D_K=",D," H=",H," h=",h," N=",N," C=",precision(C,1))))))}

{for(k=0,100,eps=k*0.005;print(eps);Cm=0;for(j=1,NL,D=LD[j];N=LN[j];H=LH[j];
C=H/(2^(N-1)*D^(eps/2));if(C>Cm,Cm=C;if(N>1,h=H/2^(N-1);
print("D_K=",D," H=",H," h=",h," N=",N," C=",precision(C,1))))))}
\end{verbatim}

\normalsize
\smallskip
and so on at will. 
To obtain the complete table for each $\varepsilon$, one drops the test on $N$:

\footnotesize
\smallskip
\begin{verbatim}
{for(k=0,100,eps=k*0.005;print(eps);Cm=0;for(j=1,NL,D=LD[j];N=LN[j];H=LH[j];
C=H/(2^(N-1)*D^(eps/2));if(C>Cm,Cm=C;h=H/2^(N-1);
print("D_K=",D," H=",H," h=",h," N=",N," C=",precision(C,1)))))}
\end{verbatim}
\normalsize

\begin{remark}{\rm When $\varepsilon$ varies, the conjecture is
fulfilled but the lists of successive maxima may be different as illustrated by 
the following excerpt:

\footnotesize
\smallskip
\begin{verbatim}
eps=0.2
D_K=-61151   H=395    h=395    N=1   C=131.2069195353264980
D_K=-61871   H=399    h=399    N=1   C=132.3805503137288383
D_K=-63839   H=423    h=423    N=1   C=139.9045245218128567
D_K=-80471   H=455    h=455    N=1   C=147.0440503223027966
D_K=-87071   H=469    h=469    N=1   C=150.3784103142821144

eps=0.05
D_K=-61151   H=395    h=395    N=1   C=299.8727528609150616
D_K=-61871   H=399    h=399    N=1   C=302.8208103554958095
D_K=-63839   H=423    h=423    N=1   C=320.7843818658118495

D_K=-75479   H=429    h=429    N=1   C=323.9751045905628560  

D_K=-80471   H=455    h=455    N=1   C=343.0602594282148802
D_K=-87071   H=469    h=469    N=1   C=352.9197837212165526
\end{verbatim}
\normalsize

\smallskip
where for $\varepsilon=0.05$ we obtain a supplementary
maximum for the discriminant $D_K=-75479$.}
\end{remark}

\subsection{Successive maxima of $\frac{H}{(\sqrt D\,)^\varepsilon}$}

When we suppress the genus factor $2^{N-1}$, we obtain many successive 
maxima with composite discriminants. Before giving a more complete table
in \S\,\ref{table2} and Appendix \ref{Table2}, we give some examples 
with various $\varepsilon$

\subsubsection{First examples with various $\varepsilon$}
${}$
\footnotesize
\begin{verbatim}
{eps=0.1;bD=1;BD=10^8;Cm=0;for(D=bD,BD,e2=valuation(D,2);d=D/2^e2;
if(e2==1||e2>3||core(d)!=d||(e2==0&Mod(-d,4)!=1)||(e2==2&Mod(-d,4)!=-1),
next);H=qfbclassno(-D);N=omega(D);h=H/2^(N-1);C=H/D^(eps/2);
if(C>Cm,Cm=C; \\ extrema test
if(N>1,print("D_K=",-D," H=",H," h=",h," N=",N," C=",precision(C,1)))))}

eps=0.1
D_K=-15      H=2     h=1    N=2    C=1.746728606032060139
D_K=-39      H=4     h=2    N=2    C=3.330479470669139602
D_K=-95      H=8     h=4    N=2    C=6.370944279257599436
D_K=-119     H=10    h=5    N=2    C=7.874493918573016203
D_K=-215     H=14    h=7    N=2    C=10.70301447267741998
D_K=-551     H=26    h=13   N=2    C=18.96338088337081395
D_K=-671     H=30    h=15   N=2    C=21.66631878049052680
D_K=-791     H=32    h=16   N=2    C=22.92140074278638768
D_K=-959     H=36    h=18   N=2    C=25.53945176699087213
D_K=-1679    H=52    h=26   N=2    C=35.87160500976424895
D_K=-1991    H=56    h=28   N=2    C=38.30314759825849873
D_K=-2159    H=60    h=30   N=2    C=40.87319792142757059
D_K=-2519    H=64    h=32   N=2    C=43.26319266328724219
D_K=-2831    H=68    h=34   N=2    C=45.69954953825021836
(...)
D_K=-4199    H=88    h=22   N=3    C=57.98629737295230403
D_K=-4991    H=92    h=23   N=3    C=60.10055034028705556
(...)
D_K=-201431  H=808   h=404  N=2    C=438.7377920576307146
D_K=-230879  H=824   h=206  N=3    C=444.3835663948799569
D_K=-239279  H=880   h=220  N=3    C=473.7371498813250101
D_K=-250631  H=892   h=223  N=3    C=479.0855974383289828
D_K=-259871  H=894   h=447  N=2    C=479.2913905823451286
D_K=-277031  H=918   h=459  N=2    C=490.5872564296189357
D_K=-277679  H=930   h=465  N=2    C=496.9421066431691019
D_K=-301391  H=974   h=487  N=2    C=518.3253419873442213
D_K=-307271  H=992   h=496  N=2    C=527.3944964507170297
D_K=-321551  H=1008  h=504  N=2    C=534.6850471107224307
D_K=-328319  H=1024  h=512  N=2    C=542.6067047266572173
D_K=-331679  H=1048  h=524  N=2    C=555.0414075965477244
D_K=-346631  H=1054  h=527  N=2    C=556.9897983860860616
(...)
D_K=-10000335791 H=224228 h=56057  N=3 C=70907.00047065937018
D_K=-10000430831 H=225362 h=112681 N=2 C=71265.56829114792618
D_K=-10000494479 H=230920 h=115460 N=2 C=73023.13519081427577
D_K=-10000950671 H=232228 h=58057  N=3 C=73436.59259212560310
D_K=-10001585879 H=234304 h=117152 N=2 C=74092.84302164243799
D_K=-10005286079 H=234340 h=58585  N=3 C=74102.85661222034478
D_K=-10005638639 H=236064 h=118032 N=2 C=74647.88735802392332
D_K=-10005946271 H=239138 h=119569 N=2 C=75619.82786456386412
D_K=-10014676559 H=239668 h=119834 N=2 C=75784.11884852224786
D_K=-10017718031 H=240986 h=120493 N=2 C=76199.71955113559173
D_K=-10042787999 H=244463 h=244463 N=1 C=77289.48656509497127
D_K=-10085398391 H=245575 h=245575 N=1 C=77624.62225923972404
(...)
\end{verbatim}
\normalsize 

\smallskip
Taking $\varepsilon = 0.20$ then $\varepsilon = 0.005$, we obtain, locally,
the following variation of the results giving the successive maxima
of $\frac{H}{ D^{\varepsilon/2}}$:

\footnotesize
\begin{verbatim}
eps=0.20                              eps =0.005
D_K=-34151 H=292 h=73  N=3 C=102.811    D_K=-34151 H=292 h=73  N=3 C=284.478
D_K=-34271 H=300 h=150 N=2 C=105.59     D_K=-34271 H=300 h=150 N=2 C=292.269
D_K=-45239 H=352 h=176 N=2 C=120.501    D_K=-45239 H=352 h=176 N=2 C=342.691
D_K=-56759 H=384 h=192 N=2 C=128.507    D_K=-48551 H=354 h=177 N=2 C=344.578
                                        D_K=-55319 H=376 h=94  N=3 C=365.873
                                        D_K=-56759 H=384 h=192 N=2 C=373.633
D_K=-60359 H=398 h=199 N=2 C=132.375    D_K=-60359 H=398 h=199 N=2 C=387.196
D_K=-65159 H=424 h=212 N=2 C=139.948    D_K=-65159 H=424 h=212 N=2 C=412.411
\end{verbatim}
\normalsize

\subsubsection{Successive maxima of $\frac{H}{\sqrt D}$}
The computation of the successive maxima of $C=\frac{H}{\sqrt D}$, 
which corresponds to take the value $\varepsilon = 1$,
gives the following (two days of computer):

\footnotesize
\begin{verbatim}
{eps=1;Cm=0;bD=2;BD=10^15;for(D=bD,BD,e2=valuation(D,2);d=D/2^e2;
if(e2==1||e2>3||core(d)!=d||(e2==0&Mod(-d,4)!=1)||(e2==2&Mod(-d,4)!=-1),
next);H=qfbclassno(-D);C=H/D^(eps/2);if(C>Cm,Cm=C; \\ maxima test
print("D_K=",-D," H=",H," N=",omega(D)," C=",precision(C,1))))}

eps=1
D_K=-3          H=1       N=1    C=0.5773502691896257646
D_K=-23         H=3       N=1    C=0.6255432421712242881
D_K=-39         H=4       N=2    C=0.6405126152203485340
D_K=-47         H=5       N=1    C=0.7293249574894727793
D_K=-71         H=7       N=1    C=0.8307471607356973298
D_K=-119        H=10      N=2    C=0.9166984970282112951
D_K=-191        H=13      N=1    C=0.9406469868801481313
D_K=-215        H=14      N=2    C=0.9547920752586628931
D_K=-239        H=15      N=1    C=0.9702693410297263506
D_K=-311        H=19      N=1    C=1.0773911565351111182
D_K=-479        H=25      N=1    C=1.1422791559623846113
D_K=-671        H=30      N=2    C=1.1581371547232743925
D_K=-959        H=36      N=2    C=1.1625006306971278460
D_K=-1151       H=41      N=1    C=1.2084987204190831295
D_K=-1319       H=45      N=1    C=1.2390536631994988148
D_K=-1511       H=49      N=1    C=1.2605609458018050271
D_K=-1559       H=51      N=1    C=1.2916567519466084330
D_K=-2351       H=63      N=1    C=1.2993143498277182834
D_K=-2999       H=73      N=1    C=1.3330137440704153804
D_K=-3071       H=76      N=2    C=1.3714301220316550780
D_K=-5711       H=109     N=1    C=1.4423491995518049464
D_K=-6551       H=117     N=1    C=1.4455464850750731891
D_K=-8399       H=134     N=2    C=1.4621468997251127841
D_K=-10391      H=153     N=1    C=1.5009380187077378750
D_K=-13439      H=174     N=2    C=1.5009484314636654405
D_K=-13991      H=178     N=2    C=1.5048583556514348538
D_K=-14951      H=185     N=1    C=1.5129919163124548319
D_K=-15791      H=195     N=1    C=1.5517787293946206107
D_K=-18191      H=213     N=1    C=1.5792515670615642815
D_K=-31391      H=289     N=1    C=1.6311551336949379106
D_K=-38639      H=325     N=1    C=1.6533714037950259530
D_K=-45239      H=352     N=2    C=1.6549549083889909590
D_K=-63839      H=423     N=1    C=1.6741614199824292696
D_K=-88919      H=505     N=1    C=1.6935346748084002013
D_K=-95471      H=533     N=1    C=1.7250094288181671943  
D_K=-118271     H=606     N=2    C=1.7621119228622864136
D_K=-201431     H=808     N=2    C=1.8003137828933471731
D_K=-331679     H=1048    N=2    C=1.8197104745663294562
D_K=-366791     H=1121    N=1    C=1.8509567763509635548
D_K=-514751     H=1347    N=1    C=1.8774526650006645342
D_K=-628319     H=1495    N=1    C=1.8860408558818552140
D_K=-701399     H=1581    N=1    C=1.8877709504119456186
D_K=-819719     H=1725    N=1    C=1.9052703045654453332
D_K=-890951     H=1818    N=2    C=1.9260473919033585268
D_K=-1238639    H=2150    N=2    C=1.9318174505729727404
D_K=-1339439    H=2262    N=2    C=1.9544795494902961276
D_K=-2155919    H=2968    N=2    C=2.0213786607241667950
D_K=-4305479    H=4227    N=1    C=2.0371429729011960035
D_K=-6077111    H=5092    N=2    C=2.0655694919245702532
D_K=-11915279   H=7206    N=2    C=2.0875753006426724498
D_K=-12537719   H=7457    N=1    C=2.1059830772702872018
D_K=-16368959   H=8565    N=1    C=2.1169803985995325238
D_K=-20357039   H=9632    N=2    C=2.1348097104711738560
D_K=-29858111   H=11885   N=1    C=2.1750438748925986011
D_K=-48796439   H=15195   N=1    C=2.1752372888815709557
D_K=-54694631   H=16208   N=2    C=2.1915787444351673315
D_K=-63434159   H=17546   N=2    C=2.2030103210049586306
D_K=-69671159   H=18455   N=1    C=2.2109938365608397427
D_K=-82636319   H=20299   N=1    C=2.2330022559566842564
D_K=-106105271  H=23077   N=1    C=2.2403242022818810857
D_K=-131486759  H=25817   N=1    C=2.2514627416787898558
D_K=-173540351  H=29674   N=2    C=2.2525573483717439197
D_K=-197317559  H=32062   N=2    C=2.2824840043420033386
D_K=-242230271  H=35666   N=2    C=2.2916073035266200847
D_K=-258363551  H=36865   N=1    C=2.2934993203392119001
D_K=-354544511  H=43212   N=3    C=2.2949275753614584471
D_K=-469058399  H=49913   N=1    C=2.3046247166517230619
D_K=-499769591  H=51733   N=1    C=2.3141033452776863919
D_K=-507138551  H=52152   N=2    C=2.3158352241187728477
D_K=-625378991  H=58028   N=3    C=2.3204165730228952904
D_K=-693126671  H=61506   N=2    C=2.3362062349259721311
D_K=-826771391  H=67332   N=2    C=2.3416867288431861497
D_K=-967603199  H=73018   N=2    C=2.3473685708435125091
D_K=-1001354639 H=74768   N=3    C=2.3627719510460623112
D_K=-1017054191 H=75498   N=2    C=2.3673550657766015599
(...)
\end{verbatim}
\normalsize

which is infinite.

\subsubsection{Distribution of the values of $N$ for the successive maxima of 
$\frac{H}{(\sqrt D\,)^\varepsilon}$}\label{table2}

We give now the statistical distribution of the values of $N$
for the successive maxima of $C=\frac{H}{(\sqrt D\,)^\varepsilon}$
with $\varepsilon =0.02$; see Appendix \ref{Table2} for the whole table:

\footnotesize
\begin{verbatim}
{eps=0.02;bD=1;BD=10^12;ND=0;N1=0;N2=0;N3=0;N4=0;N5=0;N6=0;
Cm=0;for(D=bD,BD,e2=valuation(D,2);d=D/2^e2;
if(e2==1||e2>3||core(d)!=d||(e2==0&Mod(-d,4)!=1)||(e2==2&Mod(-d,4)!=-1),
next);ND=ND+1;H=qfbclassno(-D);N=omega(D);h=H/2^(N-1);C=H/D^(eps/2);
if(C>Cm,Cm=C;if(N==1,N1=N1+1);if(N==2,N2=N2+1);if(N==3,N3=N3+1);
if(N==4,N4=N4+1);if(N==5,N5=N5+1);if(N>=6,N6=N6+1);print();
print("D_K=",-D," H=",H," h=",h," N=",N," C=",precision(C,1));
print("ND=",ND," N1=",N1," N2=",N2," N3=",N3,
" N4=",N4," N5=",N5," N6=",N6)))}
(...)
D_K=-4079 H=85 h=85 N=1 C=78.21920339356804770
ND=1241      N1=21  N2=17  N3=0  N4=0  N5=0  N6=0

D_K=-4199 H=88 h=22 N=3 C=80.95640481885140620
ND=1278      N1=21  N2=17  N3=1  N4=0  N5=0  N6=0
(...)
D_K=-2819879 H=3220 h=1610 N=2 C=2775.578833830112931
ND=857149      N1=101  N2=100  N3=16  N4=0  N5=0  N6=0

D_K=-2843231 H=3290 h=1645 N=2 C=2835.683632670836421
ND=864246      N1=101  N2=101  N3=16  N4=0  N5=0  N6=0

D_K=-2919479 H=3305 h=3305 N=1 C=2847.858521758496226
ND=887419      N1=102  N2=101  N3=16  N4=0  N5=0  N6=0

D_K=-2954591 H=3464 h=866 N=3 C=2984.509115202034352
ND=898089      N1=102  N2=101  N3=17  N4=0  N5=0  N6=0

D_K=-3068831 H=3485 h=3485 N=1 C=3001.463405484378354
ND=932809      N1=103  N2=101  N3=17  N4=0  N5=0  N6=0

D_K=-3190151 H=3593 h=3593 N=1 C=3093.279082652959006
ND=969674      N1=104  N2=101  N3=17  N4=0  N5=0  N6=0

D_K=-3312839 H=3632 h=1816 N=2 C=3125.675125147948027
ND=1006982      N1=104  N2=102  N3=17  N4=0  N5=0  N6=0

D_K=-3406751 H=3636 h=1818 N=2 C=3128.242922219554128
ND=1035530      N1=104  N2=103  N3=17  N4=0  N5=0  N6=0

D_K=-3511199 H=3640 h=1820 N=2 C=3130.738753931376549
ND=1067278      N1=104  N2=104  N3=17  N4=0  N5=0  N6=0

D_K=-3524351 H=3714 h=1857 N=2 C=3194.266213425503417
ND=1071274      N1=104  N2=105  N3=17  N4=0  N5=0  N6=0
(...)
D_K=-73396319 H=18648 h=9324 N=2 C=15558.779203424189371
ND=22309821      N1=168  N2=191  N3=34  N4=0  N5=0  N6=0

D_K=-74175191 H=18672 h=2334 N=4 C=15577.158967342971961
ND=22546587      N1=168  N2=191  N3=34  N4=1  N5=0  N6=0
(...)
\end{verbatim}
\normalsize

The first crossing of the ``curves'' $N=1$ and $N=2$'' occurs 
approximatively for $D = 2843231$. The first $N=3$ is obtained for 
$D_K=-4199 = 13 \cdot 17 \cdot 19$.

\smallskip
The first $N=4$ is obtained for $D_K=-74175191=
-41 \!\cdot\! 71  \!\cdot\!  83  \!\cdot\!  307$ (for $N_D=22546587$).

\subsection{Successive  minima of  
$\frac{H}{2^{N-1}\cdot (\sqrt{\vert D_K \vert}\,)^{\varepsilon}}$}

Without any comments, we give the program computing the {\it successive minima}
of the function 
$$c_\varepsilon(K) := \frac{H}{2^{N-1}\cdot (\sqrt{\vert D_K \vert}\,)^{\varepsilon}}$$ 

up to $5 \cdot 10^7$, for imaginary quadratic fields. 

\smallskip
From \eqref{0}\,(ii), the sequence 
$\frac{H}{(\sqrt{\vert D_K \vert}\,)^{\varepsilon}}$ shall be finite for 
$\varepsilon=1+ \eta$, $\eta >0$, and infinite for $\varepsilon < 1$ and
a fortiori for $\frac{H}{2^{N-1}\cdot (\sqrt{\vert D_K \vert}\,)^{\varepsilon}}$
since $N$ is unbounded. We test the case $\varepsilon=1$:

\footnotesize
\begin{verbatim}
{eps=1;bD=1;BD=10^8;cM=1.0;for(D=bD,BD,e=valuation(D,2);d=D/2^e;
if(e==1||e>3||core(d)!=d||(e==0 & Mod(d,4)!=-1)||(e==2 & Mod(d,4)!=1),
next);N=omega(D);H=qfbclassno(-D);h=H/2^(N-1);c=h/(sqrt(D)^eps);
if(c<cM,cM=c; \\ minima test
print("D_K=",-D," H=",H," h=",h," N=",N," c=",precision(c,1))))}
eps=1
D_K=-3          H=1    h=1    N=1    C=0.5773502691896257646
D_K=-4          H=1    h=1    N=1    C=0.5000000000000000000
D_K=-7          H=1    h=1    N=1    C=0.3779644730092272272
D_K=-8          H=1    h=1    N=1    C=0.3535533905932737622
D_K=-11         H=1    h=1    N=1    C=0.3015113445777636227
D_K=-15         H=2    h=1    N=2    C=0.2581988897471611257
D_K=-19         H=1    h=1    N=1    C=0.22941573387056176592
D_K=-20         H=2    h=1    N=2    C=0.22360679774997896964
D_K=-24         H=2    h=1    N=2    C=0.20412414523193150818
D_K=-35         H=2    h=1    N=2    C=0.16903085094570331551
D_K=-40         H=2    h=1    N=2    C=0.15811388300841896661
D_K=-43         H=1    h=1    N=1    C=0.15249857033260466633
D_K=-51         H=2    h=1    N=2    C=0.14002800840280098035
D_K=-52         H=2    h=1    N=2    C=0.13867504905630728051
D_K=-67         H=1    h=1    N=1    C=0.12216944435630522344
D_K=-84         H=4    h=1    N=3    C=0.10910894511799619063
D_K=-88         H=2    h=1    N=2    C=0.10660035817780521715
D_K=-91         H=2    h=1    N=2    C=0.10482848367219182958
D_K=-115        H=2    h=1    N=2    C=0.09325048082403137657
D_K=-120        H=4    h=1    N=3    C=0.09128709291752768558
D_K=-123        H=2    h=1    N=2    C=0.09016696346674322897
D_K=-132        H=4    h=1    N=3    C=0.08703882797784891909
D_K=-148        H=2    h=1    N=2    C=0.08219949365267864445
D_K=-163        H=1    h=1    N=1    C=0.07832604499879573413
D_K=-168        H=4    h=1    N=3    C=0.07715167498104595513
D_K=-187        H=2    h=1    N=2    C=0.07312724241271306502
D_K=-195        H=4    h=1    N=3    C=0.07161148740394328805
D_K=-228        H=4    h=1    N=3    C=0.06622661785325219033
D_K=-232        H=2    h=1    N=2    C=0.06565321642986127833
D_K=-235        H=2    h=1    N=2    C=0.06523280730534421982
D_K=-267        H=2    h=1    N=2    C=0.06119900613621045639
D_K=-280        H=4    h=1    N=3    C=0.05976143046671968200
D_K=-312        H=4    h=1    N=3    C=0.05661385170722978753
D_K=-340        H=4    h=1    N=3    C=0.05423261445466404300
D_K=-372        H=4    h=1    N=3    C=0.05184758473652126342
D_K=-403        H=2    h=1    N=2    C=0.04981354813867178958
D_K=-408        H=4    h=1    N=3    C=0.04950737714883371546
D_K=-420        H=8    h=1    N=4    C=0.04879500364742665897
D_K=-427        H=2    h=1    N=2    C=0.04839339184958272744
D_K=-435        H=4    h=1    N=3    C=0.04794633014853841427
D_K=-483        H=4    h=1    N=3    C=0.04550157551932900739
D_K=-520        H=4    h=1    N=3    C=0.04385290096535146074
D_K=-532        H=4    h=1    N=3    C=0.04335549847620599959
D_K=-555        H=4    h=1    N=3    C=0.04244763599780088816
D_K=-595        H=4    h=1    N=3    C=0.04099600308453938812
D_K=-627        H=4    h=1    N=3    C=0.03993615319154358665
D_K=-660        H=8    h=1    N=4    C=0.03892494720807614855
D_K=-708        H=4    h=1    N=3    C=0.03758230140014144273
D_K=-715        H=4    h=1    N=3    C=0.03739787960033828712
D_K=-760        H=4    h=1    N=3    C=0.03627381250550058360
D_K=-795        H=4    h=1    N=3    C=0.03546634510659543259
D_K=-840        H=8    h=1    N=4    C=0.03450327796711771089
D_K=-1012       H=4    h=1    N=3    C=0.03143473067309657382
D_K=-1092       H=8    h=1    N=4    C=0.030261376633440120992
D_K=-1155       H=8    h=1    N=4    C=0.029424494316824984322
D_K=-1320       H=8    h=1    N=4    C=0.027524094128159015140
D_K=-1380       H=8    h=1    N=4    C=0.026919095102908275320
D_K=-1428       H=8    h=1    N=4    C=0.026462806201248157256
D_K=-1435       H=4    h=1    N=3    C=0.026398183867422732984
D_K=-1540       H=8    h=1    N=4    C=0.025482359571881277232
D_K=-1848       H=8    h=1    N=4    C=0.023262105259961771688
D_K=-1995       H=8    h=1    N=4    C=0.022388683141982250148
D_K=-3003       H=8    h=1    N=4    C=0.018248296715045297629
D_K=-3315       H=8    h=1    N=4    C=0.017368336857296871107
D_K=-5460       H=16   h=1    N=5    C=0.013533299049019169228
D_K=-31395      H=32   h=2    N=5    C=0.011287551685861596711
D_K=-33915      H=32   h=2    N=5    C=0.010860106519162152420
D_K=-40755      H=32   h=2    N=5    C=0.009906940323254157974
D_K=-94395      H=48   h=3    N=5    C=0.009764426918800617707
D_K=-106260     H=96   h=3    N=6    C=0.009203146787097731965
D_K=-145860     H=96   h=3    N=6    C=0.007855125898384663556
D_K=-264180     H=128  h=4    N=6    C=0.007782336824316934453
D_K=-298452     H=128  h=4    N=6    C=0.007321882321555281918
D_K=-304980     H=128  h=4    N=6    C=0.007243097165890304815
D_K=-323323     H=64   h=4    N=5    C=0.007034636991172515786
D_K=-435435     H=128  h=4    N=6    C=0.006061754228857454068
D_K=-915915     H=160  h=5    N=6    C=0.005224472151550085263
D_K=-1141140    H=320  h=5    N=7    C=0.004680589348015776317
D_K=-1381380    H=320  h=5    N=7    C=0.004254156107243915719
D_K=-3598980    H=512  h=8    N=7    C=0.004216967659631715196
D_K=-4555915    H=256  h=8    N=6    C=0.003748022424187745904
D_K=-6561555    H=512  h=8    N=7    C=0.003123105105098544600
D_K=-8843835    H=576  h=9    N=7    C=0.003026371188649244116
D_K=-16041795   H=768  h=12   N=7    C=0.002996089378541778906
D_K=-17852835   H=768  h=12   N=7    C=0.002840060884694504730
D_K=-19399380   H=1536 h=12   N=8    C=0.002724503191342580514
D_K=-23483460   H=1536 h=12   N=8    C=0.002476282585134087042
D_K=-33093060   H=1664 h=13   N=8    C=0.002259825416804010321
D_K=-40562340   H=1536 h=12   N=8    C=0.001884168528832614627
(...)
\end{verbatim}
\normalsize

\section{Programs and tables for real quadratic fields}\label{real}

\subsection{Successive maxima of $\frac{H}{2^{N-1}\cdot 
(\sqrt{\vert D_K \vert}\,)^{\varepsilon}}$ with various $\varepsilon$}
We have analogous properties for real quadratic fields and the reader
may use the following program to test various situations; here $H$ is 
the class number in the {\it restricted sense} to get $g_{K/\Q}^{}=2^{N-1}$:

\footnotesize
\smallskip
\begin{verbatim}
{LD=List;LN=List;LH=List;NL=0;bD=2;BD=10^5;
for(D=bD,BD,e2=valuation(D,2);d=D/2^e2;if(e2==1||e2>3||
core(d)!=d||(e2==0 & Mod(d,4)!=1)||(e2==2 & Mod(d,4)!=-1),next);
NL=NL+1;listput(LD,D);N=omega(D);listput(LN,N);P=x^2-D;
K=bnfinit(P,1);H=bnfnarrow(K)[1];listput(LH,H))}
\end{verbatim}
\normalsize

\smallskip
As for the imaginary case, one then uses any interval of values of $\varepsilon$:

\footnotesize
\smallskip
\begin{verbatim}
{for(k=0,100,eps=k*0.005;print(eps);Cm=0;for(j=1,NL,D=LD[j];N=LN[j];H=LH[j];
C=H/(2^(N-1)*(D^(eps/2)));if(C>Cm,Cm=C;if(N>1,h=H/2^(N-1);
print("D_K=",D," H=",H," h=",h," N=",N," C=",precision(C,1))))))}

{for(k=0,50,eps=k*0.01;print(eps);Cm=0;for(j=1,NL,D=LD[j];N=LN[j];H=LH[j];
C=H/(2^(N-1)*(D^(eps/2)));if(C>Cm,Cm=C;h=H/2^(N-1);
print("D_K=",D," H=",H," h=",h," N=",N," C=",precision(C,1)))))}
\end{verbatim}
\normalsize

When $\varepsilon < 0.42$, one finds only the exceptional counterexample
of composite discriminant:
$${\sf D_K=136 \ \  \ H=4 \ \ \  h=2  \ \ \ N=2 \ \  \ C=0.730557694812702258}$$

and no example with $N>1$ for larger $\varepsilon$ (in the selected interval):

\footnotesize
\smallskip
\begin{verbatim}
eps=0.41
D_K=5        H=1     h=1     N=1   C=0.718970628883420458
D_K=136      H=4     h=2     N=2   C=0.730557694812702258
D_K=229      H=3     h=3     N=1   C=0.984815739249975242
D_K=401      H=5     h=5     N=1   C=1.463273164377664277
D_K=577      H=7     h=7     N=1   C=1.901327800418162251
D_K=1129     H=9     h=9     N=1   C=2.130297468803580754
D_K=1297     H=11    h=11    N=1   C=2.530696066538532698
D_K=7057     H=21    h=21    N=1   C=3.413910809035837466
D_K=8761     H=27    h=27    N=1   C=4.198945711923644210
D_K=14401    H=43    h=43    N=1   C=6.039457956003442814
D_K=41617    H=57    h=57    N=1   C=6.440570637045096139
D_K=57601    H=63    h=63    N=1   C=6.659665172069346510
D_K=90001    H=87    h=87    N=1   C=8.392639118660242231
D_K=176401   H=153   h=153   N=1   C=12.85756466018982938
D_K=650281   H=207   h=207   N=1   C=13.31326654631031378
D_K=921601   H=235   h=235   N=1   C=14.07137574672269499
(...)
eps=0.42
D_K=5        H=1     h=1     N=1   C=0.713208152908235133
D_K=229      H=3     h=3     N=1   C=0.958419857815418096
D_K=401      H=5     h=5     N=1   C=1.420069782057472125
D_K=577      H=7     h=7     N=1   C=1.841836686780795434
D_K=1129     H=9     h=9     N=1   C=2.056727626379498179
D_K=1297     H=11    h=11    N=1   C=2.441604350895807169
D_K=7057     H=21    h=21    N=1   C=3.265946529044727296
D_K=8761     H=27    h=27    N=1   C=4.012614990621811735
D_K=14401    H=43    h=43    N=1   C=5.757129506348588758
D_K=41617    H=57    h=57    N=1   C=6.107001038822202258
D_K=57601    H=63    h=63    N=1   C=6.304494128338600865
D_K=90001    H=87    h=87    N=1   C=7.927336842220217845
D_K=176401   H=153   h=153   N=1   C=12.10392466127156707
D_K=650281   H=207   h=207   N=1   C=12.45142685877934641
D_K=921601   H=235   h=235   N=1   C=13.13753377133558054
(...)
\end{verbatim}
\normalsize

\smallskip
If $\varepsilon \to 0$, we find analogous sub-lists:

\footnotesize
\smallskip
\begin{verbatim}
eps=0.0000001
D_K=5        H=1     h=1     N=1   C=0.999999919528107616
D_K=136      H=4     h=2     N=2   C=1.999999508734571761
D_K=229      H=3     h=3     N=1   C=2.999999184941810187
D_K=401      H=5     h=5     N=1   C=4.999998501509867721
D_K=577      H=7     h=7     N=1   C=6.999997774755560416
D_K=1129     H=9     h=9     N=1   C=8.999996836911151974
D_K=1297     H=11    h=11    N=1   C=10.99999605770565506
D_K=7057     H=21    h=21    N=1   C=20.99999069513798489
D_K=8761     H=27    h=27    N=1   C=26.99998774461458228
D_K=14401    H=43    h=43    N=1   C=42.99997941364113356
D_K=32401    H=45    h=45    N=1   C=44.99997663163079519
D_K=41617    H=57    h=57    N=1   C=56.99996968665561316
D_K=57601    H=63    h=63    N=1   C=62.99996547192955773
D_K=90001    H=87    h=87    N=1   C=86.99995037705828937
D_K=176401   H=153   h=153   N=1   C=152.9999075840874610
D_K=650281   H=207   h=207   N=1   C=206.9998614636418471
D_K=921601   H=235   h=235   N=1   C=234.9998386271104724
(...)
\end{verbatim}
\normalsize

\smallskip
For the real quadratic fields the conjecture may have only
the exception $D_K=136$ (with $N=2$), but $\varepsilon = 0.0000001$
and we will now go to the limit.
For a more complete table, see Table III, Appendix \ref{Table3}.

\subsection{Successive maxima of $H$ and $\frac{H}{2^{N-1}}$}
In this subsection we put $\varepsilon = 0$ to study the successive
maxima of $H$ (with the the corresponding value of~$h$), then the 
successive maxima of $h$:

\footnotesize
\smallskip
\begin{verbatim}
Successive maxima of H
{Hm=0;bD=2;BD=10^9;for(D=bD,BD,e2=valuation(D,2);d=D/2^e2;
if(e2==1||e2>3||core(d)!=d||(e2==0&Mod(d,4)!=1)||(e2==2&Mod(d,4)!=-1),
next);P=x^2-D;K=bnfinit(P,1);H=bnfnarrow(K)[1];if(H>Hm,Hm=H;N=omega(D);
print("D_K=",D," H=",H," h=",H/2^(N-1)," N=",N)))}
D_K=5          H=1       h=1      N=1
D_K=12         H=2       h=1      N=2
D_K=60         H=4       h=1      N=3
D_K=316        H=6       h=3      N=2
D_K=505        H=8       h=4      N=2
D_K=817        H=10      h=5      N=2
D_K=940        H=12      h=3      N=3
D_K=1596       H=16      h=2      N=4
D_K=3129       H=20      h=5      N=3
D_K=4009       H=22      h=11     N=2
D_K=4345       H=24      h=6      N=3
D_K=6097       H=28      h=7      N=3
D_K=11289      H=36      h=9      N=3
D_K=13321      H=52      h=13     N=3
D_K=17889      H=60      h=15     N=3
D_K=34689      H=64      h=16     N=3
D_K=44065      H=68      h=17     N=3
D_K=49321      H=92      h=23     N=3
D_K=54769      H=100     h=25     N=3
D_K=104017     H=104     h=26     N=3
D_K=108889     H=140     h=35     N=3
D_K=176401     H=153     h=153    N=1
D_K=181689     H=160     h=40     N=3
D_K=247921     H=240     h=60     N=3
D_K=318049     H=244     h=61     N=3
D_K=365521     H=252     h=63     N=3
D_K=381049     H=262     h=131    N=2
D_K=685561     H=268     h=67     N=3
D_K=697921     H=376     h=94     N=3
D_K=767449     H=392     h=196    N=2
D_K=1299505    H=448     h=112    N=3
D_K=1483321    H=456     h=114    N=3
D_K=1686481    H=474     h=237    N=2
D_K=1713961    H=478     h=239    N=2
D_K=1758289    H=492     h=123    N=3
D_K=2191249    H=504     h=126    N=3
D_K=2259961    H=596     h=149    N=3
D_K=2419321    H=600     h=150    N=3
D_K=2500681    H=612     h=306    N=2
D_K=2782641    H=616     h=154    N=3
D_K=2784721    H=632     h=158    N=3
D_K=3755521    H=752     h=47     N=5
D_K=3992041    H=780     h=195    N=3
D_K=4232449    H=880     h=220    N=3
D_K=5247841    H=984     h=246    N=3
D_K=5674321    H=1128    h=564    N=2
D_K=7064521    H=1180    h=295    N=3
(...)
\end{verbatim}
\normalsize

We note the exceptional case of $D_K= 3755521=
7 \cdot 11 \cdot 17 \cdot 19 \cdot 151$
for which $H=752$, $h=47$, $N=5$.

\footnotesize
\smallskip
\begin{verbatim}
Successive maxima of h
{hm=0;bD=2;BD=10^9;for(D=bD,BD,e2=valuation(D,2);d=D/2^e2;
if(e2==1||e2>3||core(d)!=d||(e2==0&Mod(d,4)!=1)||(e2==2&Mod(d,4)!=-1),
next);P=x^2-D;K=bnfinit(P,1);H=bnfnarrow(K)[1];N=omega(D);h=H/2^(N-1);
if(h>hm,hm=h;print("D_K=",D," H=",H," h=",h," N=",N)))}
D_K=5          H=1       h=1      N=1
D_K=136        H=4       h=2      N=2
D_K=229        H=3       h=3      N=1
D_K=401        H=5       h=5      N=1
D_K=577        H=7       h=7      N=1
D_K=1129       H=9       h=9      N=1
D_K=1297       H=11      h=11     N=1
D_K=7057       H=21      h=21     N=1
D_K=8761       H=27      h=27     N=1
D_K=14401      H=43      h=43     N=1
D_K=32401      H=45      h=45     N=1
D_K=41617      H=57      h=57     N=1
D_K=57601      H=63      h=63     N=1
D_K=90001      H=87      h=87     N=1
D_K=176401     H=153     h=153    N=1
D_K=650281     H=207     h=207    N=1
D_K=921601     H=235     h=235    N=1
D_K=1299601    H=357     h=357    N=1
D_K=2944657    H=377     h=377    N=1
D_K=3686401    H=455     h=455    N=1
D_K=3920401    H=575     h=575    N=1
D_K=7290001    H=655     h=655    N=1
(...)
\end{verbatim}
\normalsize

\section{Cyclic cubic fields -- Successive maxima of $\frac{H}{3^{N-1}
 \cdot  (\sqrt{D}\,)^\varepsilon}$}
The program computing the list of cyclic cubic fields with increasing
conductors $f$ (in ${\sf f}$) is for instance given in \cite[\S\,6.1]{Gr2}.
We then obtain the following program for the computation of the successive
maxima of $\frac{H}{3^{N-1} \cdot f^\varepsilon}$ (using $\sqrt D = f$), 
where $H$ is the class number in the ordinary sense and $N = \omega(f)$; 
this is equivalent to compute the successive maxima of the numbers
$C_\varepsilon(D) =  \sup_{K \in {\mathcal F}_{3, f}}^{}
\big(\frac{H}{3^{N-1} \cdot f^\varepsilon} \big)$ over increasing conductors.

\smallskip
When $N > 1$, we do not consider in this program the mean of the class 
numbers $h$ and $H$ for the $2^{N-1}$ fields of same conductor $f$ 
and compute the maxima inside this set of fields ${\mathcal F}_{3, f}$; 
which shall give some defects of the method, but a forthcomming program, 
for degree $p$ cyclic fields, $p\geq 3$ (see \S\,\ref{means}), shall compute 
the means $C_\varepsilon({\mathcal F}_{3, f})$ of the 
$\frac{H}{2^{N-1} \cdot (\sqrt{D}\,)^\varepsilon}$, $K \in {\mathcal F}_{3, f}$.

\footnotesize
\begin{verbatim}
{eps=0.1;bf=7;Bf=10^8;Cm=0; \\ successive conductors:
for(f=bf,Bf,phi=eulerphi(f);if(Mod(phi,3)!=0,next);
e3=valuation(f,3);ff=f/3^e3;if(e3==1||e3>2||core(ff)!=ff,next);
\\ test of q|ff congruent to 1 (mod 3):
F=factor(ff);Div=component(F,1);S=0;dF=matsize(F)[1];
for(j=1,dF,q=Div[j];if(Mod(q,3)!=1,S=1;break));
\\ list of degree 3 polynomials of conductor f:
if(S==0,N=omega(f); \\ f is the conductor to be put as (a^2+27*b^2)/4
for(b=1,sqrt(4*f/27),if(e3==2 & Mod(b,3)==0,next);
A=4*f-27*b^2;if(issquare(A,&a)==1, \\ expression of a defining polynomial:
if(e3==0,if(Mod(a,3)==1,a=-a);P=x^3+x^2+(1-f)/3*x+(f*(a-3)+1)/27);
if(e3==2,if(Mod(a,9)==3,a=-a);P=x^3-f/3*x-f*a/27);
K=bnfinit(P,1);H=K.no;h=H/3^(N-1);C=h/f^eps; if(C>Cm,Cm=C;
print("f_K=",f," H=",H," h=",h," N=",N," C=",precision(C,1)))))))}
eps=0.1
f=7        H=1      h=1     N=1    C=0.823171253993044237
f=163      H=4      h=4     N=1    C=2.403483975167433545
f=313      H=7      h=7     N=1    C=3.940429903816599872
f=1063     H=13     h=13    N=1    C=6.475749278819724722
f=1489     H=19     h=19    N=1    C=9.150906778915694477
f=2763     H=63     h=21    N=2    C=9.507825584310405539
f=4219     H=28     h=28    N=1    C=12.15170110906513844
f=4867     H=228    h=76    N=2    C=32.51527696336010219
f=11491    H=109    h=109   N=1    C=42.79477185917101752
f=15013    H=127    h=127   N=1    C=48.54639357681429304
f=23173    H=349    h=349   N=1    C=127.7400849965886920
f=74419    H=688    h=688   N=1    C=224.0887506125577062
f=165889   H=2352   h=784   N=2    C=235.6862811297153681
f=191413   H=961    h=961   N=1    C=284.7909895844608044
f=241117   H=1216   h=1216  N=1    C=352.1362960501321708
f=294313   H=2107   h=2107  N=1    C=598.1134808281728111
f=348457   H=3391   h=3391  N=1    C=946.4831145084927291
f=888313   H=3484   h=3484  N=1    C=3436.604704785608734
f=1066063  H=4087   h=4087  N=1    C=4030.666398029350791
f=1074607  H=4681   h=4681  N=1    C=4616.442084136778108
f=1339813  H=4819   h=4819  N=1    C=4751.490684118511988
f=1543813  H=6223   h=6223  N=1    C=6134.952524706830718
f=1632013  H=6277   h=6277  N=1    C=6187.844694562183552
f=1653919  H=15307  h=15307 N=1    C=15089.38598754532585
f=3046471  H=43216  h=43216 N=1    C=42575.59929298335085
f=6575641  H=46108  h=46108 N=1    C=45389.80807553668707
(...)
\end{verbatim}
\normalsize

For the conductor $f=165889 = 19 \cdot 8731$, we have the following 
information for the two cubic fields of conductor $f$, which perhaps explains 
the exception and suggests the method that we will perform in the next section:

\footnotesize
\begin{verbatim}
x^3+x^2-55296*x+3809303   H=3      h=1    N=2   C=0.300620256543004  
x^3+x^2-55296*x-1996812   H=2352   h=784  N=2   C=235.6862811297153
\end{verbatim}
\normalsize

Indeed, the program considers the large local maximum $H=2352$
while the other field has class number $3$.

\smallskip
The same program for $p=3$, computing the mean values over the
sets of fields $K$ having the same conductor $f$, would be the following
(the new list obtained does not contain data with $N=2$):

\footnotesize
\begin{verbatim}
{eps=0.1;bf=7;Bf=10^8;Cm=0; \\ successive conductors:
for(f=bf,Bf,phi=eulerphi(f);if(Mod(phi,3)!=0,next);
e3=valuation(f,3);ff=f/3^e3;if(e3==1||e3>2||core(ff)!=ff,next);
\\ test of q|ff congruent to 1 (mod p):
F=factor(ff);Div=component(F,1);S=0;dF=matsize(F)[1];
for(j=1,dF,q=Div[j];if(Mod(q,3)!=1,S=1;break));
\\ list of degree 3 polynomials of conductor f:
if(S==0,N=omega(f);HH=1;hh=1;CC=1.0;
for(b=1,sqrt(4*f/27),if(e3==2 & Mod(b,3)==0,next);
A=4*f-27*b^2;if(issquare(A,&a)==1, 
if(e3==0,if(Mod(a,3)==1,a=-a);P=x^3+x^2+(1-f)/3*x+(f*(a-3)+1)/27);
if(e3==2,if(Mod(a,9)==3,a=-a);P=x^3-f/3*x-f*a/27);
K=bnfinit(P,1);D=K.disc;H=K.no;h=H/3^(N-1);C=h/f^eps;
HH=HH*H;hh=hh*h;CC=CC*C)));nK=2^(N-1);
C=CC^(1/nK); \\ mean value of CC and extrema test:
if(C>Cm,Cm=C;H=HH^(1/nK);h=hh^(1/nK); \\ mean values of H,h
print("f=",factor(f)," P=",P," H=",precision(H,1),
" h=",precision(h,1)," N=",N," C=",precision(C,1))))}
\end{verbatim}
\normalsize

One verifies that this program gives the same results and is faster 
than the one we will give in the next subection for any $p \geq 3$.

\section{General approach of the degree $p$ cyclic case, $p>2$}

For these degree $p$ cyclic fields $K$ (thus real), except the cubic 
case, a definig polynomial of $K$ corresponding to increasing conductors
${\sf f}$ is only possible, via PARI/GP, by using ${\sf polsubcyclo(f,p)}$,
and selecting the polynomials ${\sf P}$ giving the conductor ${\sf f}$.

\begin{remark}{\rm 
We know that $f = (p^2)^\delta \cdot q_1 \ldots q_n$, for primes 
$q_i \equiv 1 \pmod p$ and $\delta \in \{0,1\}$.
If $N = n + \delta$ is the number of ramified primes, 
there are $\frac{p^N-1}{p-1}$ fields whose conductor $f'$ divides $f$ 
and $(p-1)^{N-1}$ fields $K$ of exact conductor $f$, a priori indistinguishable.
So there are various methods to built a ``mean value'' (in the 
multiplicative sense) for the computations of successive maxima of 
$C_\varepsilon$.

\smallskip
Let $K(f)$ be the compositum of the degree $p$ cyclic fields of 
conductors $f' \mid f$; then $K(f)$ is the subfield of $\Q(\mu_f)$
fixed by $\Gamma^p$, where $\Gamma = {\rm Gal}(\Q(\mu_f)/\Q)$.
Considering that the set of rational characters $\chi$ of order $p$ of
$\Gamma$ defines the set of degree $p$ cyclic subfields 
$K_\chi = K^{{\rm Ker}(\chi)}$) 
of $K(f)$, we get that, rouhgly speaking, the class 
number of $K(f)$ is equal to the product of the class numbers of the 
$K_\chi$ (this is true for the non-$p$-parts), which gives a serious reason to 
take into account, for any family of numbers $(X_K)_{K \in {\mathcal F}'_{p, f}}$,
the standard mean value $\Big[\prod_{K \in {\mathcal F}'_{p, f}}
X_K \Big]^{\frac{1}{\order {\mathcal F}'{}_{\!\!\! p, f} }}$,
or the analogous one replacing ${\mathcal F}'_{p, f}$ by 
${\mathcal F}_{p, f}$ (cf. Conjecture \ref{mainconj}).}
\end{remark}

\smallskip
In the programs, ${\sf p}$ is the prime considered, ${\sf factor(f)}$
gives the set of ramified primes, 
${\sf P}$ is a defining polynomial of the field,
${\sf H}$ its class number in the ordinary 
sense, ${\sf h} := \frac{H}{p^{N-1}}$ the non-genus part, 
and ${\sf C}=C_\varepsilon(D)$. 
The programs compute the two previous multiplicative mean values 
of ${\sf H}$, ${\sf h}$ and ${\sf C}$, taking into account the two following
remarks:

\smallskip
(i) We consider, first, only the fields of exact conductor $f$; so in that case
the number $N$ (then the genus number) is the same for all these fields
and makes sense in the results; but if a local maximum is reached by a 
composite conductor, the mean values of ${\sf H}$ and ${\sf h}$ can be 
non-integers. Otherwise, if the maximum is reached by a prime 
conductor, one may write the corresponding polynomial which is unique.

\smallskip
(ii) Then we consider all the fields whose conductor $f'$ divides $f$;
in that case $N$ and $P$ depend on each field and the program gives 
only the mean values of ${\sf H}$, ${\sf h}$ and ${\sf C}$.

\subsection{Fields of conductor $f$: mean value 
of \footnotesize $\frac{H}{p^{N-1} \cdot (\sqrt{D}\,)^\varepsilon}$}\label{means}

Thus $\sqrt D=f^{\frac{p-1}{2}}$; since PARI/GP
gives the $\frac{p^N-1}{p-1}$ polynomials (defining all the fields of degree $p$
{\it contained in $\Q(\mu_f)$}) in the vector ${\sf V=Mat(polsubcyclo(f,p))}$,
we must test, by means of the discriminants, using the function 
${\sf nfdisc}$, if the field defined by $P$ is of exact conductor $f$. 

\smallskip
The results show that one always get $N=1$ and 
for instance, for $p=3$, the conductor $165889=19 \cdot 8731$ 
does not appear as a maximum. Since all results are with prime
conductors, we write the corresponding (unique) polynomial; this makes 
sense only in that case:

\footnotesize
\begin{verbatim}
{eps=0.01;p=3;Cm=0; \\ successive conductors:
for(f=7,10^6,phi=eulerphi(f);if(Mod(phi,p)!=0,next);
ep=valuation(f,p);ff=f/p^ep;if(ep==1||ep>2||core(ff)!=ff,next);
\\ test of q|ff congruent to 1 (mod p):
F=factor(ff);Div=component(F,1);S=0;dF=matsize(F)[1];
for(j=1,dF,q=Div[j];if(Mod(q,p)!=1,S=1;break));
\\ list of degree p polynomials in the fth cyclotomic field:
if(S==0,V=Mat(polsubcyclo(f,p));dV=matsize(V)[2];HH=1;hh=1;CC=1.0;
for(k=1,dV,P=component(V,k)[1];D=nfdisc(P);N=omega(f);
\\ test conductor(K)=f:
if(omega(D)==N,K=bnfinit(P,1);H=K.no;h=H/p^(N-1);C=h/D^(eps/2);
HH=HH*H;hh=hh*h;CC=CC*C));nK=(p-1)^(N-1);
C=CC^(1/nK); \\ mean value of CC and extrema test:
if(C>Cm,Cm=C;H=HH^(1/nK);h=hh^(1/nK);\\ mean values of H,h
print("f=",factor(f)," P=",P," H=",precision(H,1),
" h=",precision(h,1)," N=",N," C=",precision(C,1)))))}

p=3
f=[7,1]      P=x^3+x^2-2*x-1             H=1    h=1    N=1 C=0.9807290047229
f=[163,1]    P=x^3+x^2-54*x-169          H=4    h=4    N=1 C=3.8013522515881
f=[313,1]    P=x^3+x^2-104*x+371         H=7    h=7    N=1 C=6.6091041630333
f=[1063,1]   P=x^3+x^2-354*x+2441        H=13   h=13   N=1 C=12.124895929808
f=[1489,1]   P=x^3+x^2-496*x+4081        H=19   h=19   N=1 C=17.661380777478
f=[4219,1]   P=x^3+x^2-1406*x-625        H=28   h=28   N=1 C=25.757632116142
f=[5119,1]   P=x^3+x^2-1706*x+26543      H=31   h=31   N=1 C=28.462290210252
f=[7351,1]   P=x^3+x^2-2450*x-1089       H=49   h=49   N=1 C=44.826271355692
f=[9511,1]   P=x^3+x^2-3170*x+65168      H=73   h=73   N=1 C=66.610178628175
f=[11491,1]  P=x^3+x^2-3830*x-89800      H=109  h=109  N=1 C=99.271119537133
f=[15013,1]  P=x^3+x^2-5004*x+134561     H=127  h=127  N=1 C=115.35569659944
f=[23173,1]  P=x^3+x^2-7724*x-258336     H=349  h=349  N=1 C=315.62805883284
f=[74419,1]  P=x^3+x^2-24806*x-11025     H=688  h=688  N=1 C=614.99502127609
f=[191413,1] P=x^3+x^2-63804*x+6181931   H=961  h=961  N=1 C=850.94927649437
f=[241117,1] P=x^3+x^2-80372*x-35721     H=1216 h=1216 N=1 C=1074.2646788161
f=[294313,1] P=x^3+x^2-98104*x+11794321  H=2107 h=2107 N=1 C=1857.7036788965
f=[348457,1] P=x^3+x^2-116152*x-15190144 H=3391 h=3391 N=1 C=2984.7385706495
(...)
\end{verbatim}
\normalsize

Same program with $\varepsilon=0.4$:

\footnotesize
\begin{verbatim}
f=[7,1]      P=x^3+x^2-2*x-1             H=1    h=1    N=1 C=0.4591565499594
f=[163,1]    P=x^3+x^2-54*x-169          H=4    h=4    N=1 C=0.5214167154550
f=[313,1]    P=x^3+x^2-104*x+371         H=7    h=7    N=1 C=0.7028785742753
f=[1063,1]   P=x^3+x^2-354*x+2441        H=13   h=13   N=1 C=0.8004423281900
f=[1489,1]   P=x^3+x^2-496*x+4081        H=19   h=19   N=1 C=1.0223408675783
f=[7351,1]   P=x^3+x^2-2450*x-1089       H=49   h=49   N=1 C=1.3920585783307
f=[9511,1]   P=x^3+x^2-3170*x+65168      H=73   h=73   N=1 C=1.8708216514284
f=[11491,1]  P=x^3+x^2-3830*x-89800      H=109  h=109  N=1 C=2.5899022210956
f=[15013,1]  P=x^3+x^2-5004*x+134561     H=127  h=127  N=1 C=2.7115444434617
f=[23173,1]  P=x^3+x^2-7724*x-258336     H=349  h=349  N=1 C=6.2637226912587
f=[74419,1]  P=x^3+x^2-24806*x-11025     H=688  h=688  N=1 C=7.7431006805904
f=[241117,1] P=x^3+x^2-80372*x-35721     H=1216 h=1216 N=1 C=8.5515028017316
f=[294313,1] P=x^3+x^2-98104*x+11794321  H=2107 h=2107 N=1 C=13.681721863115
f=[348457,1] P=x^3+x^2-116152*x-15190144 H=3391 h=3391 N=1 C=20.581077709540
(...)
\end{verbatim}
\normalsize

\subsection{Fields of conductors $f' \!\mid \!f$: mean value
of \footnotesize $\frac{H}{p^{N-1} \cdot (\sqrt{D}\,)^\varepsilon}$}

The program is the same as the previous one without testing the conductors.

\smallskip
The mean value $C_\varepsilon$ (in ${\sf C}$) is computed 
over all the $C_\varepsilon(D) = \frac{H}{p^{N-1} \cdot 
(\sqrt{D}\,)^\varepsilon}$.

\smallskip
If ${\sf H}$ and ${\sf h}$ are the corresponding mean values, we will see that 
${\sf H=h}$ for the successive maxima of these mean values in the selected intervals,
which indicates that the maxima are reached for prime conductors
(i.e., ${\sf dV}=1$):

\footnotesize
\smallskip
\begin{verbatim}
{eps=0.02;p=3;Cm=0; \\ successive conductors:
for(f=7,10^6,phi=eulerphi(f);if(Mod(phi,p)!=0,next);
ep=valuation(f,p);ff=f/p^ep;if(ep==1||ep>2||core(ff)!=ff,next);
 \\ test of q|ff congruent to 1 (mod p):
F=factor(ff);Div=component(F,1);S=0;dF=matsize(F)[1];
for(j=1,dF,q=Div[j];if(Mod(q,p)!=1,S=1;break));
\\ list of degree p polynomials in the fth cyclotomic field:
if(S==0,V=Mat(polsubcyclo(f,p));dV=matsize(V)[2];HH=1;hh=1;CC=1;
for(k=1,dV,P=component(V,k)[1];K=bnfinit(P,1);
D=K.disc;N=omega(D);H=K.no;h=H/p^(N-1);C=h/D^(eps/2);
HH=HH*H;hh=hh*h;CC=CC*C);
C=CC^(1/dV); \\ mean value of CC and extrema test:
if(C>Cm,Cm=C;H=HH^(1/dV);h=hh^(1/dV); \\ mean values of H,h
print("f=",factor(f)," H=",precision(H,1),
" h=",precision(h,1)," C=",precision(C,1)))))}
p=3
f=[7,1]       H=1      h=1       C=0.961829380704799441
f=[163,1]     H=4      h=4       C=3.612569735163622387
f=[313,1]     H=7      h=7       C=6.240036833974918259
f=[1063,1]    H=13     h=13      C=11.30870010066782027
f=[1489,1]    H=19     h=19      C=16.41707215616319439*
f=[4219,1]    H=28     h=28      C=23.69484329394764450***
f=[5119,1]    H=31     h=31      C=26.13232141976207791***
f=[7351,1]    H=49     h=49      C=41.00805313579896863*
f=[9511,1]    H=73     h=73      C=60.77966982023849296
f=[11491,1]   H=109    h=109     C=90.41059792803446483
f=[15013,1]   H=127    h=127     C=104.7790294326226146
f=[23173,1]   H=349    h=349     C=285.4471963397972172
f=[74419,1]   H=688    h=688     C=549.7367386546242885
f=[191413,1]  H=961    h=961     C=753.5012186954210082
f=[241117,1]  H=1216   h=1216    C=949.0498356513206700
f=[294313,1]  H=2107   h=2107    C=1637.903634829544508
f=[348457,1]  H=3391   h=3391    C=2627.149612245101090
(...)
\end{verbatim}
\normalsize

Same program wit  h $\varepsilon=0.4$: 

\footnotesize
\begin{verbatim}
f=[7,1]       H=1      h=1       C=0.459156549959434092
f=[163,1]     H=4      h=4       C=0.521416715455083946
f=[313,1]     H=7      h=7       C=0.702878574275380209
f=[1063,1]    H=13     h=13      C=0.800442328190010161
f=[1489,1]    H=19     h=19      C=1.022340867578304918*
f=[7351,1]    H=49     h=49      C=1.392058578330795015*
f=[9511,1]    H=73     h=73      C=1.870821651428443619
f=[11491,1]   H=109    h=109     C=2.589902221095671686
f=[15013,1]   H=127    h=127     C=2.711544443461731055
f=[23173,1]   H=349    h=349     C=6.263722691258787681
f=[74419,1]   H=688    h=688     C=7.743100680590427658
f=[241117,1]  H=1216   h=1216    C=8.551502801731692495
f=[294313,1]  H=2107   h=2107    C=13.68172186311554962
f=[348457,1]  H=3391   h=3391    C=20.58107770954063231
(...)
\end{verbatim}
\normalsize

We remark an example of different lists according to $\varepsilon$
(cf. $1489$ and $7351$).

\subsection{Successive maxima of the mean value of 
{\footnotesize $\frac{H}{(\sqrt{D}\,)^\varepsilon}$}}

In that case, the parameters $N$, $P$ do not make sense for non-prime $f$.

\subsubsection{$G \simeq \Z/p\Z$. Fields of conductor $f$ and mean value 
of \footnotesize $\frac{H}{(\sqrt{D}\,)^\varepsilon}$.}
${}$
\footnotesize 
\begin{verbatim}
{eps=0.02;p=3;Cm=0; \\ successive conductors:
for(f=1,10^6,phi=eulerphi(f);if(Mod(phi,p)!=0,next);
ep=valuation(f,p);ff=f/p^ep;if(ep==1||ep>2||core(ff)!=ff,next);
\\ test of q|ff congruent to 1 (mod p):
F=factor(ff);Div=component(F,1);S=0;dF=matsize(F)[1];
for(j=1,dF,q=Div[j];if(Mod(q,p)!=1,S=1;break));
\\ list of degree p polynomials in the fth cyclotomic field:
if(S==0,V=Mat(polsubcyclo(f,p));dV=matsize(V)[2];HH=1;hh=1;CC=1;
for(k=1,dV,P=component(V,k)[1];D=nfdisc(P);N=omega(f);
\\ test of ramification:
if(omega(D)==N,K=bnfinit(P,1);H=K.no;C=H/D^(eps/2);
HH=HH*H;CC=CC*C));nK=(p-1)^(N-1);
C=CC^(1/nK); \\ mean value of CC and extrema test:
if(C>Cm,Cm=C;H=HH^(1/nK); \\ mean of H
print("f=",factor(f)," H=",precision(H,1)," C=",precision(C,1)))))}
p=3
f=[7,1]          H=1      C=0.961829380704799441
f=[3,2; 7,1]     H=3.0    C=2.761432575434841409
f=[163,1]        H=4      C=3.612569735163622387
f=[313,1]        H=7      C=6.240036833974918259
f=[3,2; 73,1]    H=9.0    C=7.904805747328513653
f=[1063,1]       H=13     C=11.30870010066782027
f=[7,1; 181,1]   H=18.0   C=15.60331847344150489
f=[1489,1]       H=19     C=16.41707215616319439
f=[3,2; 307,1]   H=23.811761799581315314    
                          C=20.32188136664178736
f=[4219,1]       H=28     C=23.69484329394764450
f=[5119,1]       H=31     C=26.13232141976207791
f=[7351,1]       H=49     C=41.00805313579896863
f=[9511,1]       H=73     C=60.77966982023849296
f=[11491,1]      H=109    C=90.41059792803446483
f=[15013,1]      H=127    C=104.7790294326226146
f=[23173,1]      H=349    C=285.4471963397972172
f=[74419,1]      H=688    C=549.7367386546242885
f=[191413,1]     H=961    C=753.5012186954210082
f=[241117,1]     H=1216   C=949.0498356513206700
f=[294313,1]     H=2107   C=1637.903634829544508
f=[348457,1]     H=3391   C=2627.149612245101090
(...)
\end{verbatim}

\normalsize
\smallskip
Same program with $\varepsilon=0.2$:

\footnotesize
\smallskip
\begin{verbatim}
f=[7,1]          H=1      C=0.677610913400480949
f=[3,1; 7,1]     H=3      C=1.309945251235621361
f=[163,1]        H=4      C=1.444183804721662077
f=[313,1]        H=7      C=2.218141118127442645
f=[3,1; 73,1]    H=9      C=2.458835278103877640
f=[1063,1]       H=13     C=3.225794517087244988
f=[7,1; 181,1]   H=18     C=4.312381578109574420
f=[1489,1]       H=19     C=4.407320782968695309
f=[3,1; 307,1]   H=23.811761799581315314   
                          C=4.881073557098234152
f=[4219,1]       H=28     C=5.273708565859104133
f=[5119,1]       H=31     C=5.617261834937099453
f=[7351,1]       H=49     C=8.258987246521752304
f=[9511,1]       H=73     C=11.68631595303996494
f=[11491,1]      H=109    C=16.80176604108711584
f=[15013,1]      H=127    C=18.55710495523587440
f=[23173,1]      H=349    C=46.75509832359800418
f=[74419,1]      H=688    C=72.98803510333878161
f=[191413,1]     H=961    C=84.39740660613575690
f=[241117,1]     H=1216   C=101.9736603584756001
f=[294313,1]     H=2107   C=169.7863008772629543
f=[348457,1]     H=3391   C=264.1787927011785785
(...)
\end{verbatim}
\normalsize
 
As expected, there are some composite conductors giving successive maxima.

\subsubsection{$G \simeq \Z/p\Z$. Fields of conductors $f' \mid f$ and mean 
value of \footnotesize $\frac{H}{(\sqrt{D}\,)^\varepsilon}$.}
${}$
\footnotesize
\begin{verbatim}
{eps=0.02;p=3;Cm=0;
for(f=7,10^6,phi=eulerphi(f);if(Mod(phi,p)!=0,next);
ep=valuation(f,p);ff=f/p^ep;if(ep==1||ep>2||core(ff)!=ff,next);
F=factor(ff);Div=component(F,1);S=0;dF=matsize(F)[1];
for(j=1,dF,q=Div[j];if(Mod(q,p)!=1,S=1;break));
if(S==0,V=Mat(polsubcyclo(f,p));dV=matsize(V)[2];HH=1;CC=1;
for(k=1,dV,P=component(V,k)[1];K=bnfinit(P,1);
D=K.disc;H=K.no;C=H/D^(eps/2);HH=HH*H;CC=CC*C);
C=CC^(1/dV); \\ mean value of CC and extrema test:
if(C>Cm,Cm=C;H=HH^(1/dV);
print("f=",factor(f)," H=",precision(H,1)," C=",precision(C,1)))))}
p=3
f=[7,1]       H=1      C=0.961829380704799441
f=[3,2; 7,1]  H=1.7320508075688772936    
                       C=1.627685591700590660
f=[163,1]     H=4      C=3.612569735163622387
f=[313,1]     H=7      C=6.240036833974918259
f=[1063,1]    H=13     C=11.30870010066782027
f=[1489,1]    H=19     C=16.41707215616319439
f=[4219,1]    H=28     C=23.69484329394764450
f=[5119,1]    H=31     C=26.13232141976207791
f=[7351,1]    H=49     C=41.00805313579896863
f=[9511,1]    H=73     C=60.77966982023849296
f=[11491,1]   H=109    C=90.41059792803446483
f=[15013,1]   H=127    C=104.7790294326226146
f=[23173,1]   H=349    C=285.4471963397972172
f=[74419,1]   H=688    C=549.7367386546242885
f=[191413,1]  H=961    C=753.5012186954210082
f=[241117,1]  H=1216   C=949.0498356513206700
f=[294313,1]  H=2107   C=1637.903634829544508
f=[348457,1]  H=3391   C=2627.149612245101090
(...)
\end{verbatim}

\normalsize
Same program with $\varepsilon=0.2$:

\footnotesize
\begin{verbatim}
f=[7,1]       H=1      C=0.677610913400480949
f=[3,1; 7,1]  H=1.7320508075688772936    
                       C=0.930378241131994665
f=[163,1]     H=4      C=1.444183804721662077
f=[313,1]     H=7      C=2.218141118127442645
f=[1063,1]    H=13     C=3.225794517087244988
f=[1489,1]    H=19     C=4.407320782968695309
f=[4219,1]    H=28     C=5.273708565859104133
f=[5119,1]    H=31     C=5.617261834937099453
f=[7351,1]    H=49     C=8.258987246521752304
f=[9511,1]    H=73     C=11.68631595303996494
f=[11491,1]   H=109    C=16.80176604108711584
f=[15013,1]   H=127    C=18.55710495523587440
f=[23173,1]   H=349    C=46.75509832359800418
f=[74419,1]   H=688    C=72.98803510333878161
f=[191413,1]  H=961    C=84.39740660613575690
f=[241117,1]  H=1216   C=101.9736603584756001
f=[294313,1]  H=2107   C=169.7863008772629543
f=[348457,1]  H=3391   C=264.1787927011785785
\end{verbatim}
\normalsize

\subsection{Analogous tables for $p=5$}
The computations need more time but we obtain similar results. 
The polynomial ${\sf P}$ makes sense because $N=1$ in all the 
successive maxima given by the program.

\subsubsection{$G \simeq \Z/p\Z$. Fields of conductor $f$ and mean value of \footnotesize 
$\frac{H}{p^{N-1} \cdot (\sqrt{D}\,)^\varepsilon}$.}
${}$
\footnotesize
\smallskip
\begin{verbatim}
p=5  eps=0.01
f=[11,1]      P=x^5+x^4-4*x^3-3*x^2+3*x+1 
                        H=1        h=1        N=1  C=0.953173909654138686
f=[191,1]     P=x^5+x^4-76*x^3-359*x^2-437*x-155 
                        H=11       h=11       N=1  C=9.903119457621479703
f=[941,1]     P=x^5+x^4-376*x^3+3877*x^2-13445*x+15271 
                        H=16       h=16       N=1  C=13.95237660139343057
f=[3931,1]    P=x^5+x^4-1572*x^3+36637*x^2-309637*x+899921 
                        H=256      h=256      N=1  C=216.9451294678943811
f=[5051,1]    P=x^5+x^4-2020*x^3-55965*x^2-558681*x-1920311 
                        H=1451     h=1451     N=1  C=1223.488425196746388
f=[80251,1]   P=x^5+x^4-32100*x^3-3617715*x^2-151594781*x-2241837391 
                        H=37631    h=37631    N=1  C=30023.18416657978783
f=[154291,1]  P=x^5+x^4-61716*x^3+9584557*x^2-555677185*x+11397220121 
                        H=108691   h=108691   N=1  C=85590.73829767367053
f=[349211,1]  P=x^5+x^4-139684*x^3-32923613*x^2-2897797577*x-90346698359 
                        H=186091   h= 186091  N=1  C=144166.2531422279504
f=[555671,1]  P=x^5+x^4-222268*x^3+65858127*x^2-7299765465*x+286894118701 
                        H=721151   h= 721151  N=1  C=553515.5874018525546
f=[641491,1]  P=x^5+x^4-256596*x^3-82033869*x^2-9804476597*x-415497361495 
                        H=1566401  h= 1566401 N=1  C=1198834.245498212652
f=[1464901,1] P=x^5+x^4-585960*x^3+282608701*x^2-51039799381*x+3272126756879 
                        H=4628591  h= 4628591 N=1  C=3484437.195260249301
p=5  eps=0.400
f=[11,1]      P=x^5+x^4-4*x^3-3*x^2+3*x+1 
                        H=1        h=1        N=1  C=0.146854024200198000
f=[191,1]     P=x^5+x^4-76*x^3-359*x^2-437*x-155 
                        H=11       h=11       N=1  C=0.164651613663662086
f=[3931,1]    P=x^5+x^4-1572*x^3+36637*x^2-309637*x+899921 
                        H=256      h=256      N=1  C=0.340908264873088935
f=[5051,1]    P=x^5+x^4-2020*x^3-55965*x^2-558681*x-1920311 
                        H=1451     h=1451     N=1  C=1.581122495380098063
f=[80251,1]   P=x^5+x^4-32100*x^3-3617715*x^2-151594781*x-2241837391 
                        H=37631    h=37631    N=1  C=4.487303013765233752
f=[154291,1]  P=x^5+x^4-61716*x^3+9584557*x^2-555677185*x+11397220121 
                        H=108691   h=108691   N=1  C=7.682827980583928749
f=[349211,1]  P=x^5+x^4-139684*x^3-32923613*x^2-2897797577*x-90346698359 
                        H=186091   h= 186091  N=1  C=144166.2531422279504
f=[555671,1]  P=x^5+x^4-222268*x^3+65858127*x^2-7299765465*x+286894118701 
                        H=721151   h= 721151  N=1  C=553515.5874018525546
f=[641491,1]  P=x^5+x^4-256596*x^3-82033869*x^2-9804476597*x-415497361495 
                        H=1566401  h= 1566401 N=1  C=1198834.2454982126520
f=[1464901,1] P=x^5+x^4-585960*x^3+282608701*x^2-51039799381*x+3272126756879 
                        H=4628591  h= 4628591 N=1  C=3484437.195260249301
\end{verbatim}
\normalsize 

\subsubsection{$G \simeq \Z/p\Z$. Fields of conductors $f' \mid f$ and 
mean value of \footnotesize $\frac{H}{p^{N-1} \cdot (\sqrt{D}\,)^\varepsilon}$.}
${}$
\footnotesize
\begin{verbatim}
p=5  eps=0.02
f=[11,1]      H=1 h=1            C=0.908540502045356139
f=[191,1]     H=11 h=11          C=8.915615908356468213
f=[941,1]     H=16 h=16          C=12.16680080169443102
f=[3931,1]    H=256 h=256        C=183.8483953118806834
f=[5051,1]    H=1451 h=1451      C=1031.649846030609568
f=[80251,1]   H=37631 h=37631    C=23953.43167867894208
f=[154291,1]  H=108691 h=108691  C=67400.00995796213464
(...)
p=5  eps=0.40
f=[11,1]      H=1 h=1            C=0.146854024200198000
f=[191,1]     H=11 h=11          C=0.164651613663662086
f=[3931,1]    H=256 h=256        C=0.340908264873088935
f=[5051,1]    H=1451 h=1451      C=1.581122495380098063
f=[80251,1]   H=37631 h=37631    C=4.487303013765233752
f=[154291,1]  H=108691 h=108691  C=7.682827980583928749
(...)
\end{verbatim}
\normalsize

\subsubsection{$G \simeq \Z/p\Z$. Fields of conductors $f' \mid f$ and 
mean value of \footnotesize $\frac{H}{(\sqrt{D}\,)^\varepsilon}$.}
${}$
\footnotesize
\begin{verbatim}
p=5  eps=0.020
f=[11,1]      H=1       C=0.908540502045356139
f=[191,1]     H=11      C=8.915615908356468213
f=[941,1]     H=16      C=12.16680080169443102
f=[3931,1]    H=256     C=183.8483953118806834
f=[5051,1]    H=1451    C=1031.649846030609568
f=[80251,1]   H=37631   C=23953.43167867894208
f=[154291,1]  H=108691  C=67400.00995796213464
(...)
p=5  eps=0.200
f=[11,1]      H=1       C=0.383215375735627288
f=[191,1]     H=11      C=1.345796325712134995
f=[3931,1]    H=256     C=9.341976011931885039
f=[5051,1]    H=1451    C=47.89789912716968310
f=[80251,1]   H=37631   C=410.9278521967080188
f=[154291,1]  H=108691  C=913.8130312255608229
(...)
\end{verbatim}
\normalsize

\section{Other real or imaginary abelian fields}
We will try to illustrate the Conjecture \ref{genconj} about the 
nature of $G$ and especially about the influence of the 
number of generators of $G$. As we have explained
in the introduction, we have chosen to classify by
increasing conductors instead of discriminants.
The programs eliminate the integers $f$ which are not conductors 
for trivial reasons, then test that the current polynomial $P$ of degree 
$d=\order G$ (given by ${\sf polsubcyclo(f,d)}$) 
defines a field $K$ of exact conductor $f$; for this, we consider
the base field $\Q$ defined via ${\sf Q=bnfinit(y-1,1)}$ and the programs
compute the conductor of the (abelian) extension defined by $P$
using ${\sf rnfconductor(Q,P)}$.

\smallskip
For this, we will consider for $p=2,3$, $G \simeq \Z/4\Z$,
$(\Z/2\Z)^2$, $(\Z/2\Z)^3$, $(\Z/3\Z)^2$, then $G \simeq \Z/6\Z$. 
In an heuristic point of view, we must 
observe that the successive maxima, for the non-genus parts, are obtained for 
almost all conductors $f$ fulfilling the inequalities $r(G) \leq \omega(f) \leq R(G)$. 
The number of fields $K$ fulfilling the conditions (Galois group, conductor, 
signature, etc.) is denoted ${\sf nK}$ in the programs,
for the computation of the means.

\subsection{Imaginary quartic cyclic fields of conductor $f$}
The program uses ${\sf G1=polgalois(polcyclo(5))}$ to have the group
$\Z/4\Z$ and test the Galois groups of order $4$ given by PARI/GP
via ${\sf polgalois(P)}$.

\subsubsection{$G \simeq \Z/4\Z$. Successive maxima of the mean of 
$\frac{H}{\raise1.8pt \hbox{\footnotesize $g_{K/\Q}^{}$} 
\cdot (\sqrt{D}\,)^\varepsilon}$.}
${}$
\footnotesize
\smallskip
\begin{verbatim}
{eps=0.01;Cm=0;G1=polgalois(polcyclo(5));Q=bnfinit(y-1,1);
\\ successive conductors:
for(f=1,10^8,phi=eulerphi(f);if(Mod(phi,4)!=0,next);
f2=valuation(f,2);ff=f/(2^f2);if(f2==1||f2>4||core(ff)!=ff,next);
\\ list of degree 4 polynomials in the fth cyclotomic field:
V=Mat(polsubcyclo(f,4));dV=matsize(V)[2];nK=0;HH=1;hh=1;CC=1.0;
for(k=1,dV,P=component(V,k)[1];D=nfdisc(P);
\\ test of ramification and Galois group:
if(omega(D)==omega(f) & polgalois(P)==G1,fK=rnfconductor(Q,P);
if(component(fK[1][1],1)[1]==f,K=bnfinit(P,1); \\ test fK=f
if(K.sign[2]!=0, \\ test K imaginary
\\ computation of the genus number:
Df=factor(f);df=matsize(Df)[1];Div=component(Df,1);gK=1;for(j=1,df,
q=Div[j];F=idealfactor(K,q);eq=component(F,1)[1][3];gK=gK*eq);gK=gK/4;
nK=nK+1;H=K.no;h=H/gK;C=h/D^(eps/2);HH=HH*H;hh=hh*h;CC=CC*C))));
\\ mean of CC and extrema test:
if(nK!=0,C=CC^(1/nK);if(C>Cm,Cm=C;H=HH^(1/nK);h=hh^(1/nK); \\ mean of H,h
print("f=",f,"=",Df," H=",precision(H,1),
" C=",precision(C,1)," omega(f)=",omega(f)," nK=",nK))))}

f=5=[5,1]                H=1        C=0.976147508055066522  omega(f)=1  nK=1
f=51=[3,1; 17,1]         H=10       C=4.739604006983370970  omega(f)=2  nK=1
f=109=[109,1]            H=17       C=15.84482780824856713  omega(f)=1  nK=1
f=123=[3,1; 41,1]        H=34       C=15.90325237458455595  omega(f)=2  nK=1
f=181=[181,1]            H=25       C=23.12463199169500313  omega(f)=1  nK=1
f=229=[229,1]            H=51       C=47.00809414205433725  omega(f)=1  nK=1
f=511=[7,1; 73,1]        H=226      C=103.9148512284847603  omega(f)=2  nK=1
f=723=[3,1; 241,1]       H=244      C=111.1369214637977656  omega(f)=2  nK=1
f=733=[733,1]            H=135      C=122.2805008461487052  omega(f)=1  nK=1
f=829=[829,1]            H=145      C=131.0960735251626679  omega(f)=1  nK=1
f=1093=[1093,1]          H=925      C=832.8416392829036284  omega(f)=1  nK=1
f=2029=[2029,1]          H=1183     C=1055.298969187637205  omega(f)=1  nK=1
f=3027=[3,1; 1009,1]     H=9100     C=4056.783364201576760  omega(f)=2  nK=1
f=4516=[2,2; 1129,1]     H=10404    C=4616.994304602346489  omega(f)=2  nK=1
f=7573=[7573,1]          H=7569     C=6619.870380619097221  omega(f)=1  nK=1
f=10376=[2,3; 1297,1]    H=21692    C=9539.917095634037343  omega(f)=2  nK=1
f=10957=[10957,1]        H=14119    C=12280.28831693695852  omega(f)=1  nK=1
f=13271=[23,1; 577,1]    H=52780    C=23249.01055510957600  omega(f)=2  nK=1
f=14267=[11,1; 1297,1]   H=107470   C=47113.91854137649729  omega(f)=2  nK=1
f=31333=[31333,1]        H=69331    C=59359.11736777416773  omega(f)=1  nK=1
f=40967=[71,1; 577,1]    H=201600   C=87807.23696987217193  omega(f)=2  nK=1
f=43387=[43,1; 1009,1]   H=205254   C=89098.12848060980410  omega(f)=2  nK=1
f=48851=[11,1; 4441,1]   H=208090   C=89556.08009624690970  omega(f)=2  nK=1
f=54987=[3,1; 18329,1]   H=258990   C=110543.8804978991659  omega(f)=2  nK=1
f=59053=[59053,1]        H=257125   C=218059.8330367149523  omega(f)=1  nK=1
f=61327=[7,1; 8761,1]    H=1721952  C=736888.4677682894188  omega(f)=2  nK=1
f=150031=[7,1; 21433,1]  H=3365766  C=1421139.460131018697  omega(f)=2  nK=1
f=197341=[197341,1]      H=1927305  C=1605174.264796961130  omega(f)=1  nK=1
f=245029=[245029,1]      H=2147103  C=1782438.755437137458  omega(f)=1  nK=1
f=267707=[11,1; 24337,1] H=4458500  C=1870470.412521304054  omega(f)=2  nK=1
f=319611=[3,1; 106537,1] H=4964300  C=2063689.257767956675  omega(f)=2  nK=1
f=356411=[11,1; 32401,1] H=12376170 C=5169921.722394828539  omega(f)=2  nK=1
(...)
\end{verbatim}
\normalsize

\smallskip
To get the table for real fields, one has to replace the test ${\sf if(K.sign[2]!=0}$
by ${\sf if(K.sign[2]==0}$, which gives the short excerpt:

\footnotesize
\smallskip
\begin{verbatim}
f=15=[3,1; 5,1]         H=1     C=0.482741062227355455  omega(f)=2  nK=1
f=16=[2,4]              H=1     C=0.962594443101751380  omega(f)=1  nK=1
f=212=[2,2; 53,1]       H=5     C=2.323032419483186330  omega(f)=2  nK=1
f=257=[257,1]           H=3     C=2.760401519960496457  omega(f)=1  nK=1
f=401=[401,1]           H=5     C=4.570069858094963666  omega(f)=1  nK=1
f=577=[577,1]           H=7     C=6.363270756545588592  omega(f)=1  nK=1
f=916=[2,2; 229,1]      H=15    C=6.817782326293629939  omega(f)=2  nK=1
f=1129=[1129,1]         H=9     C=8.099386061052115192  omega(f)=1  nK=1
f=1297=[1297,1]         H=11    C=9.878672447143713655  omega(f)=1  nK=1
f=1697=[1697,1]         H=17    C=15.20560462285605392  omega(f)=1  nK=1
f=2005=[5,1; 401,1]     H=80    C=35.97684919154074598  omega(f)=2  nK=1
f=4616=[2,3; 577,1]     H=112   C=49.85853230401655734  omega(f)=2  nK=1
f=7753=[7753,1]         H=75    C=65.57211873846320180  omega(f)=1  nK=1
f=8116=[2,2; 2029,1]    H=175   C=76.98005594903680110  omega(f)=2  nK=1
f=10376=[2,3; 1297,1]   H=286   C=125.7798400032885248  omega(f)=2  nK=1
f=24337=[24337,1]       H=185   C=158.9928970375704956  omega(f)=1  nK=1
f=25096=[2,3; 3137,1]   H=1152  C=499.9699077725989672  omega(f)=2  nK=1
f=54772=[2,2; 13693,1]  H=1755  C=750.2034311196041105  omega(f)=2  nK=1
f=56456=[2,3; 7057,1]   H=4074  C=1746.750303432222082  omega(f)=2  nK=1
f=115208=[2,3; 14401,1] H=4472  C=1896.989860579581230  omega(f)=2  nK=1
(...)
\end{verbatim}
\normalsize

\subsubsection{$G \simeq \Z/4\Z$. Successive maxima of the mean 
of $\frac{H}{(\sqrt{D}\,)^\varepsilon}$.}
Considering the same program for the successive maxima of the whole
class numbers $H$, the number $\omega(f)$ of primes dividing 
$f$ tends to be larger in a statistical point of view (for instance 
some $\omega(f) = 3$):   

\footnotesize
\smallskip
\begin{verbatim}
{eps=0.1;Cm=0;G1=polgalois(polcyclo(5));Q=bnfinit(y-1,1);
\\ successive conductors:
for(f=1,10^6,phi=eulerphi(f);if(Mod(phi,4)!=0,next);
f2=valuation(f,2);ff=f/(2^f2);if(f2==1||f2>4||core(ff)!=ff,next);
\\ list of degree 4 polynomials in the fth cyclotomic field:
V=Mat(polsubcyclo(f,4));dV=matsize(V)[2];nK=0;HH=1;CC=1.0;
for(k=1,dV,P=component(V,k)[1];D=nfdisc(P);
\\ test of ramification and Galois group:
if(omega(D)==omega(f) & polgalois(P)==G1,fK=rnfconductor(Q,P);
if(component(fK[1][1],1)[1]==f,K=bnfinit(P,1); \\ test fK=f
if(K.sign[2]!=0, \\ test K imaginary
nK=nK+1;H=K.no;C=H/abs(D)^(eps/2);HH=HH*H;CC=CC*C))));
\\ mean of CC and extrema test:
if(nK!=0,C=CC^(1/nK);if(C>Cm,Cm=C;H=HH^(1/nK); \\ mean of H
print("f=",f,"=",factor(f)," H=",precision(H,1),
" C=",precision(C,1)," omega(f)=",omega(f)," nK=",nK))))}

f=5=[5,1]                 H=1        C=0.785515030231764334  omega(f)=1  nK=1
f=40=[2,3; 5,1]           H=2        C=1.276072931359182746  omega(f)=2  nK=1
f=51=[3,1; 17,1]          H=10       C=5.857611124383549536  omega(f)=2  nK=1
f=109=[109,1]             H=17       C=8.410754307693637111  omega(f)=1  nK=1
f=123=[3,1; 41,1]         H=34       C=17.45216109112128972  omega(f)=2  nK=1
f=187=[11,1; 17,1]        H=34       C=17.48929528433495368  omega(f)=2  nK=1
f=229=[229,1]             H=51       C=22.57328962224578600  omega(f)=1  nK=1
f=291=[3,1; 97,1]         H=68       C=30.67474966040826406  omega(f)=2  nK=1
f=411=[3,1; 137,1]        H=74       C=31.69651460480555020  omega(f)=2  nK=1
f=435=[3,1; 5,1; 29,1]    H=115.37764081484765738 
                                     C=49.00093067514680456  omega(f)=3  nK=2
f=511=[7,1; 73,1]         H=226      C=97.74627279797258465  omega(f)=2  nK=1
f=1015=[5,1; 7,1; 29,1]   H=326.3372488699382291 
                                     C=127.3361291154895528  omega(f)=3  nK=2
f=1093=[1093,1]           H=925      C=323.8535846044137371  omega(f)=1  nK=1
f=1604=[2,2; 401,1]       H=1060     C=375.5157960003103599  omega(f)=2  nK=1
f=1799=[7,1; 257,1]       H=1398     C=500.6190813114782884  omega(f)=2  nK=1
f=3027=[3,1; 1009,1]      H=9100     C=2888.987028762488449  omega(f)=2  nK=1
f=4516=[2,2; 1129,1]      H=10404    C=3155.660350206402243  omega(f)=2  nK=1
f=8655=[3,1; 5,1; 577,1]  H=15064    C=4427.461660644001969  omega(f)=3  nK=1
f=10376=[2,3; 1297,1]     H=21692    C=6012.424012890724163  omega(f)=2  nK=1
f=12419=[11,1; 1129,1]    H=24858    C=6814.324702892086861  omega(f)=2  nK=1
f=13271=[23,1; 577,1]     H=52780    C=14863.47066425293806  omega(f)=2  nK=1
f=14267=[11,1; 1297,1]    H=107470   C=28854.06354818241425  omega(f)=2  nK=1
f=40967=[71,1; 577,1]     H=201600   C=50721.05997730998500  omega(f)=2  nK=1
f=54987=[3,1; 18329,1]    H=258990   C=53223.08625892298533  omega(f)=2  nK=1
f=61327=[7,1; 8761,1]     H=1721952  C=363185.2356760567566  omega(f)=2  nK=1
f=150031=[7,1; 21433,1]   H=3365766  C=620742.7707399591228  omega(f)=2  nK=1
f=267707=[11,1; 24337,1]  H=4458500  C=771097.7682083982741  omega(f)=2  nK=1
f=319611=[3,1; 106537,1]  H=4964300  C=783467.2078602062662  omega(f)=2  nK=1
f=331223=[23,1; 14401,1]  H=4510700  C=783993.1400401131476  omega(f)=2  nK=1
f=356411=[11,1; 32401,1]  H=12376170 C=2050516.345595339414  omega(f)=2  nK=1
(...)
\end{verbatim}
\normalsize

\subsection{Imaginary biquadratic fields of conductor $f$}
For non-cyclic fields, except for $K=\Q(\sqrt 2, \sqrt{-1})$,
the conductor is necessarily such that $\omega(f) \geq 2$, and 
the ``exceptional'' cases should give $\omega(f) =3, 4, \ldots$

\smallskip
We put ${\sf G2=polgalois(polcyclo(12))}$ to have the group $(\Z/2\Z)^2$:

\subsubsection{$G \simeq \Z/2\Z \times \Z/2\Z$. Successive maxima of the mean of $\frac{H}
{\raise1.6pt \hbox{\footnotesize $g_{K/\Q}^{}$} \cdot (\sqrt{D}\,)^\varepsilon}$.}
${}$
\footnotesize
\begin{verbatim}
{eps=0.01;Cm=0;G2=polgalois(polcyclo(12));Q=bnfinit(y-1,1);
\\ successive conductors:
for(f=1,10^6,phi=eulerphi(f);if(Mod(phi,4)!=0,next);
f2=valuation(f,2);ff=f/(2^f2);if(f2==1||f2>4||core(ff)!=ff,next);
\\ list of degree 4 polynomials in the fth cyclotomic field:
V=Mat(polsubcyclo(f,4));dV=matsize(V)[2];nK=0;HH=1;hh=1;CC=1.0;
for(k=1,dV,P=component(V,k)[1];D=nfdisc(P);
\\ test of ramification and Galois group:
if(omega(D)==omega(f) & polgalois(P)==G2,fK=rnfconductor(Q,P);
if(component(fK[1][1],1)[1]==f,K=bnfinit(P,1); \\ test fK=f
if(K.sign[2]!=0, \\ test K imaginary
\\ computation of the genus number:
Df=factor(f);df=matsize(Df)[1];Div=component(Df,1);gK=1;for(j=1,df,
q=Div[j];F=idealfactor(K,q);eq=component(F,1)[1][3];gK=gK*eq);gK=gK/4;
nK=nK+1;H=K.no;h=H/gK;C=h/abs(D)^(eps/2);HH=HH*H;hh=hh*h;CC=CC*C))));
\\ mean of CC and extrema test:
if(nK!=0,C=CC^(1/nK);if(C>Cm,Cm=C;H=HH^(1/nK);h=hh^(1/nK); \\ mean of H,h
print("f=",f,"=",Df," H=",precision(H,1),
" C=",precision(C,1)," omega(f)=",omega(f)," nK=",nK))))}

f=8=[2,3]                H=1     C=0.972654947412285518  omega(f)=1  nK=1
f=12=[2,2; 3,1]          H=1     C=0.975457130078795936  omega(f)=2  nK=1
f=39=[3,1; 13,1]         H=2     C=1.928054694106823799  omega(f)=2  nK=1
f=69=[3,1; 23,1]         H=3     C=2.875628398112372534  omega(f)=2  nK=1
f=95=[5,1; 19,1]         H=4     C=3.821930235323273249  omega(f)=2  nK=1
f=119=[7,1; 17,1]        H=5     C=4.766663944449055461  omega(f)=2  nK=1
f=155=[5,1; 31,1]        H=6     C=5.704898650119474394  omega(f)=2  nK=1
f=213=[3,1; 71,1]        H=7     C=6.634592355562253982  omega(f)=2  nK=1
f=295=[5,1; 59,1]        H=12    C=11.33660528868662035  omega(f)=2  nK=1
f=316=[2,2; 79,1]        H=15    C=14.16101517671179429  omega(f)=2  nK=1
f=391=[17,1; 23,1]       H=21    C=19.78324489680824008  omega(f)=2  nK=1
f=527=[17,1; 31,1]       H=27    C=25.35979029775623778  omega(f)=2  nK=1
f=655=[5,1; 131,1]       H=30    C=28.11634357939838355  omega(f)=2  nK=1
f=695=[5,1; 139,1]       H=36    C=33.71961852330659701  omega(f)=2  nK=1
f=755=[5,1; 151,1]       H=42    C=39.30699295336691860  omega(f)=2  nK=1
f=1055=[5,1; 211,1]      H=54    C=50.36875720346152355  omega(f)=2  nK=1
f=1195=[5,1; 239,1]      H=60    C=55.89559343232499964  omega(f)=2  nK=1
f=1207=[17,1; 71,1]      H=63    C=58.68450919759415447  omega(f)=2  nK=1
f=1343=[17,1; 79,1]      H=85    C=79.09302129521269893  omega(f)=2  nK=1
f=1751=[17,1; 103,1]     H=120   C=111.3649136315469349  omega(f)=2  nK=1
f=2155=[5,1; 431,1]      H=126   C=116.6906535951326993  omega(f)=2  nK=1
f=2159=[17,1; 127,1]     H=150   C=138.9148686560359562  omega(f)=2  nK=1
f=2567=[17,1; 151,1]     H=154   C=142.3726150732244520  omega(f)=2  nK=1
f=3239=[41,1; 79,1]      H=175   C=161.4113014454594570  omega(f)=2  nK=1
f=3247=[17,1; 191,1]     H=208   C=191.8441285757209858  omega(f)=2  nK=1
f=3791=[17,1; 223,1]     H=238   C=219.1741945424862956  omega(f)=2  nK=1
f=4471=[17,1; 263,1]     H=286   C=262.9432257122401507  omega(f)=2  nK=1
f=5095=[5,1; 1019,1]     H=312   C=286.4726411861354221  omega(f)=2  nK=1
f=5287=[17,1; 311,1]     H=323   C=296.4629523337229678  omega(f)=2  nK=1
f=6071=[13,1; 467,1]     H=336   C=307.9687675618586813  omega(f)=2  nK=1
f=6155=[5,1; 1231,1]     H=378   C=346.4172575739003240  omega(f)=2  nK=1
f=6239=[17,1; 367,1]     H=405   C=371.1110393200093025  omega(f)=2  nK=1
f=6995=[5,1; 1399,1]     H=432   C=395.3992758795738976  omega(f)=2  nK=1
f=7295=[5,1; 1459,1]     H=440   C=402.5524030522415731  omega(f)=2  nK=1
f=7463=[17,1; 439,1]     H=495   C=452.7683540573952683  omega(f)=2  nK=1
f=7895=[5,1; 1579,1]     H=504   C=460.7411639385067668  omega(f)=2  nK=1
f=8551=[17,1; 503,1]     H=525   C=479.5557849789485247  omega(f)=2  nK=1
f=9655=[5,1; 1931,1]     H=546   C=498.1327779099005440  omega(f)=2  nK=1
f=9895=[5,1; 1979,1]     H=644   C=587.3969800430742345  omega(f)=2  nK=1
f=10183=[17,1; 599,1]    H=800   C=729.4763720843714494  omega(f)=2  nK=1
f=12359=[17,1; 727,1]    H=884   C=804.5118246192972826  omega(f)=2  nK=1
f=12431=[31,1; 401,1]    H=1155  C=1051.082786075149247  omega(f)=2  nK=1
f=13271=[23,1; 577,1]    H=1386  C=1260.474876339380179  omega(f)=2  nK=1
f=17663=[17,1; 1039,1]   H=1426  C=1293.149930556092194  omega(f)=2  nK=1
f=22559=[17,1; 1327,1]   H=1560  C=1411.209202757567127  omega(f)=2  nK=1
f=24463=[17,1; 1439,1]   H=1833  C=1656.827780210867060  omega(f)=2  nK=1
f=24599=[17,1; 1447,1]   H=1978  C=1787.792507027724221  omega(f)=2  nK=1
f=27119=[47,1; 577,1]    H=3045  C=2749.505294205692715  omega(f)=2  nK=1
f=29831=[23,1; 1297,1]   H=3432  C=3095.997498110343389  omega(f)=2  nK=1
f=40967=[71,1; 577,1]    H=4116  C=3701.272193141451084  omega(f)=2  nK=1
f=47423=[47,1; 1009,1]   H=4235  C=3802.712781426917957  omega(f)=2  nK=1
f=53063=[47,1; 1129,1]   H=5760  C=5166.239854556097149  omega(f)=2  nK=1
f=60959=[47,1; 1297,1]   H=9130  C=8177.497107751319985  omega(f)=2  nK=1
f=80159=[71,1; 1129,1]   H=10710 C=9566.431099491350923  omega(f)=2  nK=1
f=96359=[167,1; 577,1]   H=14784 C=13181.14176742287956  omega(f)=2  nK=1
f=137903=[239,1; 577,1]  H=22050 C=19589.02663140455669  omega(f)=2  nK=1
f=151751=[263,1; 577,1]  H=26754 C=23745.28612625392000  omega(f)=2  nK=1
f=207143=[359,1; 577,1]  H=34713 C=30713.50960296288906  omega(f)=2  nK=1
f=216599=[167,1; 1297,1] H=37268 C=32959.41709369729731  omega(f)=2  nK=1
f=248687=[431,1; 577,1]  H=43071 C=38038.94157342156453  omega(f)=2  nK=1
f=313799=[311,1; 1009,1] H=58121 C=51211.38983446847804  omega(f)=2  nK=1
f=351119=[311,1; 1129,1] H=83106 C=73143.85340462042777  omega(f)=2  nK=1
(...)
\end{verbatim}
\normalsize

\subsubsection{$G \simeq \Z/2\Z \times \Z/2\Z$. Successive maxima of the 
mean of $\frac{H}{(\sqrt{D}\,)^\varepsilon}$.}
Same computations for the successive maxima of the whole class number; we
observe simlar results as for the cyclic case with few $\omega(D)=3$:

\footnotesize
\begin{verbatim}
{eps=0.01;Cm=0;G2=polgalois(polcyclo(12));Q=bnfinit(y-1,1);
\\ successive conductors:
for(f=1,10^6,phi=eulerphi(f);if(Mod(phi,4)!=0,next);
f2=valuation(f,2);ff=f/(2^f2);if(f2==1||f2>3||core(ff)!=ff,next);
\\ list of degree 4 polynomials in the fth cyclotomic field:
V=Mat(polsubcyclo(f,4));dV=matsize(V)[2];nK=0;HH=1;hh=1;CC=1.0;
for(k=1,dV,P=component(V,k)[1];D=nfdisc(P); 
\\ test of ramification and Galois group:
if(omega(D)==omega(f) & polgalois(P)==G2,fK=rnfconductor(Q,P);
if(component(fK[1][1],1)[1]==f,K=bnfinit(P,1); \\ test fK=f
if(K.sign[2]!=0, \\ test K imaginary
nK=nK+1;H=K.no;C=H/D^(eps/2);HH=HH*H;CC=CC*C))));
\\ mean of CC and extrema test:
if(nK!=0,C=CC^(1/nK);if(C>Cm,Cm=C;H=HH^(1/nK); \\ mean of H
print("f=",f,"=",factor(f)," H=",precision(H,1),
" C=",precision(C,1)," omega(f)=",omega(f)," nK=",nK))))}

f=8=[2,3]               H=1      C=0.972654947412285518  omega(f)=1  nK=1
f=12=[2,2; 3,1]         H=1      C=0.975457130078795936  omega(f)=2  nK=1
f=24=[2,3; 3,1]         H=1.4142135623730950488 
                                 C=1.365236011752337206  omega(f)=2  nK=4
f=39=[3,1; 13,1]        H=2      C=1.928054694106823799  omega(f)=2  nK=1
f=56=[2,3; 7,1]         H=2.378414230005442134 
                                 C=2.276671989463074733  omega(f)=2  nK=4
f=69=[3,1; 23,1]        H=3      C=2.875628398112372534  omega(f)=2  nK=1
f=95=[5,1; 19,1]        H=4      C=3.821930235323273249  omega(f)=2  nK=1
f=119=[7,1; 17,1]       H=5      C=4.766663944449055461  omega(f)=2  nK=1
f=136=[2,3; 17,1]       H=5.656854249492380195 
                                 C=5.357742969863248032  omega(f)=2  nK=4
f=155=[5,1; 31,1]       H=6      C=5.704898650119474394  omega(f)=2  nK=1
f=213=[3,1; 71,1]       H=7      C=6.634592355562253982  omega(f)=2  nK=1
f=255=[3,1; 5,1; 17,1]  H=9.118028227819110568 
                                 C=8.626517954608540449  omega(f)=3  nK=4
f=295=[5,1; 59,1]       H=12     C=11.33660528868662035  omega(f)=2  nK=1
f=316=[2,2; 79,1]       H=15     C=14.16101517671179429  omega(f)=2  nK=1
f=391=[17,1; 23,1]      H=21     C=19.78324489680824008  omega(f)=2  nK=1
f=527=[17,1; 31,1]      H=27     C=25.35979029775623778  omega(f)=2  nK=1
f=655=[5,1; 131,1]      H=30     C=28.11634357939838355  omega(f)=2  nK=1
f=695=[5,1; 139,1]      H=36     C=33.71961852330659701  omega(f)=2  nK=1
f=755=[5,1; 151,1]      H=42     C=39.30699295336691860  omega(f)=2  nK=1
f=1055=[5,1; 211,1]     H=54     C=50.36875720346152355  omega(f)=2  nK=1
f=1195=[5,1; 239,1]     H=60     C=55.89559343232499964  omega(f)=2  nK=1
f=1207=[17,1; 71,1]     H=63     C=58.68450919759415447  omega(f)=2  nK=1
f=1343=[17,1; 79,1]     H=85     C=79.09302129521269893  omega(f)=2  nK=1
f=1751=[17,1; 103,1]    H=120    C=111.3649136315469349  omega(f)=2  nK=1
f=2155=[5,1; 431,1]     H=126    C=116.6906535951326993  omega(f)=2  nK=1
f=2159=[17,1; 127,1]    H=150    C=138.9148686560359562  omega(f)=2  nK=1
f=2567=[17,1; 151,1]    H=154    C=142.3726150732244520  omega(f)=2  nK=1
f=3239=[41,1; 79,1]     H=175    C=161.4113014454594570  omega(f)=2  nK=1
f=3247=[17,1; 191,1]    H=208    C=191.8441285757209858  omega(f)=2  nK=1
f=3791=[17,1; 223,1]    H=238    C=219.1741945424862956  omega(f)=2  nK=1
f=4471=[17,1; 263,1]    H=286    C=262.9432257122401507  omega(f)=2  nK=1
f=5095=[5,1; 1019,1]    H=312    C=286.4726411861354221  omega(f)=2  nK=1
f=5287=[17,1; 311,1]    H=323    C=296.4629523337229678  omega(f)=2  nK=1
f=6071=[13,1; 467,1]    H=336    C=307.9687675618586813  omega(f)=2  nK=1
f=6155=[5,1; 1231,1]    H=378    C=346.4172575739003240  omega(f)=2  nK=1
f=6239=[17,1; 367,1]    H=405    C=371.1110393200093025  omega(f)=2  nK=1
f=6995=[5,1; 1399,1]    H=432    C=395.3992758795738976  omega(f)=2  nK=1
f=7295=[5,1; 1459,1]    H=440    C=402.5524030522415731  omega(f)=2  nK=1
f=7463=[17,1; 439,1]    H=495    C=452.7683540573952683  omega(f)=2  nK=1
f=7895=[5,1; 1579,1]    H=504    C=460.7411639385067668  omega(f)=2  nK=1
f=8551=[17,1; 503,1]    H=525    C=479.5557849789485247  omega(f)=2  nK=1
f=9655=[5,1; 1931,1]    H=546    C=498.1327779099005440  omega(f)=2  nK=1
f=9895=[5,1; 1979,1]    H=644    C=587.3969800430742345  omega(f)=2  nK=1
f=10183=[17,1; 599,1]   H=800    C=729.4763720843714494  omega(f)=2  nK=1
f=12359=[17,1; 727,1]   H=884    C=804.5118246192972826  omega(f)=2  nK=1
f=12431=[31,1; 401,1]   H=1155   C=1051.082786075149247  omega(f)=2  nK=1
f=13271=[23,1; 577,1]   H=1386   C=1260.474876339380179  omega(f)=2  nK=1
f=17663=[17,1; 1039,1]  H=1426   C=1293.149930556092194  omega(f)=2  nK=1
f=22559=[17,1; 1327,1]  H=1560   C=1411.209202757567127  omega(f)=2  nK=1
f=24463=[17,1; 1439,1]  H=1833   C=1656.827780210867060  omega(f)=2  nK=1
f=24599=[17,1; 1447,1]  H=1978   C=1787.792507027724221  omega(f)=2  nK=1
f=27119=[47,1; 577,1]   H=3045   C=2749.505294205692715  omega(f)=2  nK=1
f=29831=[23,1; 1297,1]  H=3432   C=3095.997498110343389  omega(f)=2  nK=1
f=40967=[71,1; 577,1]   H=4116   C=3701.272193141451084  omega(f)=2  nK=1
f=47423=[47,1; 1009,1]  H=4235   C=3802.712781426917957  omega(f)=2  nK=1
f=53063=[47,1; 1129,1]  H=5760   C=5166.239854556097149  omega(f)=2  nK=1
f=60959=[47,1; 1297,1]  H=9130   C=8177.497107751319985  omega(f)=2  nK=1
f=80159=[71,1; 1129,1]  H=10710  C=9566.431099491350923  omega(f)=2  nK=1
f=96359=[167,1; 577,1]  H=14784  C=13181.14176742287956  omega(f)=2  nK=1
f=137903=[239,1; 577,1] H=22050  C=19589.02663140455669  omega(f)=2  nK=1
f=151751=[263,1; 577,1] H=26754  C=23745.28612625392000  omega(f)=2  nK=1
f=207143=[359,1; 577,1] H=34713  C=30713.50960296288906  omega(f)=2  nK=1
f=216599=[167,1; 1297,1]H=37268  C=32959.41709369729731  omega(f)=2  nK=1
f=248687=[431,1; 577,1] H=43071  C=38038.94157342156453  omega(f)=2  nK=1
f=313799=[311,1; 1009,1]H=58121  C=51211.38983446847804  omega(f)=2  nK=1
f=351119=[311,1; 1129,1]H=83106  C=73143.85340462042777  omega(f)=2  nK=1
(...)
\end{verbatim}
\normalsize

\subsection{Experiment for multi-quadratic fields of rank $3$}
We give the case of imaginary multi-quadratic fields.
For the $2$-rank $3$, we obtain the following results 
with many $\omega(f)=3=r(G)=R(G)$, which are,
as for the case of the $3$-rank $2$, very in
accordence with the Conjecture \ref{mainconj} (all the cases in the
selected interval are such that ${\sf h=H}$ since ${\sf nK=1}$);
note that the cases $\omega(f)=2$ correspond to conductors
$f = 2^3 \cdot \ell$ for which the wild ramification of $2$
``counts for two generator'' of ${\rm Gal}(\Q(\mu_f)/\Q)$:

\footnotesize
\smallskip
\begin{verbatim}
{eps=0.01;Cm=0;G3=polgalois(polcyclo(24));Q=bnfinit(y-1,1);
\\ successive conductors:
for(f=1,10^6,phi=eulerphi(f);if(Mod(phi,8)!=0,next);
f2=valuation(f,2);ff=f/(2^f2);if(f2==1||f2>4||core(ff)!=ff,next);
\\ list of degree 8 polynomials in the fth cyclotomic field:
V=Mat(polsubcyclo(f,8));dV=matsize(V)[2];nK=0;HH=1;hh=1;CC=1.0;
for(k=1,dV,P=component(V,k)[1];D=nfdisc(P);
\\ test of ramification and Galois group:
if(omega(D)==omega(f) & polgalois(P)==G3,fK=rnfconductor(Q,P);
if(component(fK[1][1],1)[1]==f,K=bnfinit(P,1); \\ test fK=f
if(K.sign[2]!=0, \\ test K imaginary
\\ computation of the genus number:
Df=factor(f);df=matsize(Df)[1];Div=component(Df,1);gK=1;for(j=1,df,
q=Div[j];F=idealfactor(K,q);eq=component(F,1)[1][3];gK=gK*eq);gK=gK/8;
nK=nK+1;H=K.no;h=H/gK;C=h/abs(D)^(eps/2);HH=HH*H;hh=hh*h;CC=CC*C))));
\\ mean of CC and extrema test:
if(nK!=0,C=CC^(1/nK);if(C>Cm,Cm=C;H=HH^(1/nK);h=hh^(1/nK); \\ mean of H,h
print("f=",f,"=",Df," H=",H,
" C=",precision(C,1)," omega(f)=",omega(f)," nK=",nK))))}

f=24=[2,3; 3,1]            H=1      C=0.925497340811340481  omega(f)=2  nK=1
f=56=[2,3; 7,1]            H=2      C=1.819892080922239686  omega(f)=2  nK=1
f=104=[2,3; 13,1]          H=3      C=2.696248944021590053  omega(f)=2  nK=1
f=136=[2,3; 17,1]          H=4      C=3.575762069753877514  omega(f)=2  nK=1
f=184=[2,3; 23,1]          H=6      C=5.331314392329002593  omega(f)=2  nK=1
f=248=[2,3; 31,1]          H=12     C=10.59916401240363877  omega(f)=2  nK=1
f=328=[2,3; 41,1]          H=16     C=14.05341612030092425  omega(f)=2  nK=1
f=376=[2,3; 47,1]          H=20     C=17.51885180427532467  omega(f)=2  nK=1
f=568=[2,3; 71,1]          H=42     C=36.48729968846260794  omega(f)=2  nK=1
f=632=[2,3; 79,1]          H=60     C=52.01352759088901895  omega(f)=2  nK=1
f=904=[2,3; 113,1]         H=64     C=55.08533633618848662  omega(f)=2  nK=1
f=1016=[2,3; 127,1]        H=120    C=103.0440149547490389  omega(f)=2  nK=1
f=1495=[5,1; 13,1; 23,1]   H=120    C=103.6784324181077920  omega(f)=3  nK=1
f=1595=[5,1; 11,1; 29,1]   H=160    C=138.0590144834850848  omega(f)=3  nK=1
f=1784=[2,3; 223,1]        H=336    C=285.2927796818069852  omega(f)=2  nK=1
f=2056=[2,3; 257,1]        H=384    C=325.1248480613886757  omega(f)=2  nK=1
f=2755=[5,1; 19,1; 29,1]   H=416    C=355.0511294873608592  omega(f)=3  nK=1
f=3055=[5,1; 13,1; 47,1]   H=450    C=383.2765860472549375  omega(f)=3  nK=1
f=3512=[2,3; 439,1]        H=750    C=628.2457702585119454  omega(f)=2  nK=1
f=3835=[5,1; 13,1; 59,1]   H=792    C=671.5059640041688235  omega(f)=3  nK=1
f=4355=[5,1; 13,1; 67,1]   H=990    C=837.2505305391947183  omega(f)=3  nK=1
f=4495=[5,1; 29,1; 31,1]   H=1344   C=1135.911966664581723  omega(f)=3  nK=1
f=4616=[2,3; 577,1]        H=1568   C=1306.291624871039468  omega(f)=2  nK=1
f=4879=[7,1; 17,1; 41,1]   H=2730   C=2303.541446342435365  omega(f)=3  nK=1
f=6088=[2,3; 761,1]        H=3600   C=2982.582105985213100  omega(f)=2  nK=1
f=8555=[5,1; 29,1; 59,1]   H=5376   C=4485.541300328763516  omega(f)=3  nK=1
f=10295=[5,1; 29,1; 71,1]  H=7168   C=5958.616976839243116  omega(f)=3  nK=1
f=11455=[5,1; 29,1; 79,1]  H=7200   C=5972.450989307247585  omega(f)=3  nK=1
f=11635=[5,1; 13,1; 179,1] H=7680   C=6368.628152049485991  omega(f)=3  nK=1
f=12536=[2,3; 1567,1]      H=9900   C=8084.467416708329516  omega(f)=2  nK=1
f=12935=[5,1; 13,1; 199,1] H=10368  C=8579.454193948520892  omega(f)=3  nK=1
f=14495=[5,1; 13,1; 223,1] H=12250  C=10113.73859193621600  omega(f)=3  nK=1
f=15215=[5,1; 17,1; 179,1] H=12480  C=10293.64404441184357  omega(f)=3  nK=1
f=15535=[5,1; 13,1; 239,1] H=14040  C=11575.52994096647780  omega(f)=3  nK=1
f=16031=[17,1; 23,1; 41,1] H=31248  C=25746.78542880426499  omega(f)=3  nK=1
f=21607=[17,1; 31,1; 41,1] H=55080  C=45113.03465787028027  omega(f)=3  nK=1
f=30095=[5,1; 13,1; 463,1] H=59598  C=48491.07352541684482  omega(f)=3  nK=1
f=30595=[5,1; 29,1; 211,1] H=70848  C=57625.48395127060845  omega(f)=3  nK=1
f=32759=[17,1; 41,1; 47,1] H=140400 C=114040.8629779094895  omega(f)=3  nK=1
f=47995=[5,1; 29,1; 331,1] H=155232 C=125128.8153786116152  omega(f)=3  nK=1
f=49487=[17,1; 41,1; 71,1] H=341334 C=274972.7946695323715  omega(f)=3  nK=1
(...)
\end{verbatim}
\normalsize

\subsection{Experiment for multi-cubic fields of rank $2$}
The program is easily deduced from the previous ones.
For the $3$-rank $2$, we obtain the following results, still in
accordence with the Conjecture \ref{mainconj} (all cases in the
selected interval are such that ${\sf nK=1}$, which means ${\sf h=H}$):

\footnotesize
\begin{verbatim}
{eps=0.01;Cm=0;G2=polgalois(polsubcyclo(63,9));Q=bnfinit(y-1,1);
\\ successive conductors:
for(f=1,10^6,phi=eulerphi(f);if(Mod(phi,9)!=0,next);
f3=valuation(f,3);ff=f/(3^f3);if(f3==1||f3>2||core(ff)!=ff,next);
\\ list of degree 9 polynomials in the fth cyclotomic field:
V=Mat(polsubcyclo(f,9));dV=matsize(V)[2];nK=0;HH=1;hh=1;CC=1.0;
for(k=1,dV,P=component(V,k)[1];D=nfdisc(P);
\\ test of ramification and Galois group:
if(omega(D)==omega(f) & polgalois(P)==G2,fK=rnfconductor(Q,P);
if(component(fK[1][1],1)[1]==f,K=bnfinit(P,1); \\ test fK=f
\\ computation of the genus number:
Df=factor(f);df=matsize(Df)[1];Div=component(Df,1);gK=1;for(j=1,df,
q=Div[j];F=idealfactor(K,q);eq=component(F,1)[1][3];gK=gK*eq);gK=gK/9;
nK=nK+1;H=K.no;h=H/gK;C=h/abs(D)^(eps/2);HH=HH*H;hh=hh*h;CC=CC*C)));
\\ mean of CC and extrema test:
if(nK!=0,C=CC^(1/nK);if(C>Cm,Cm=C;H=HH^(1/nK);h=hh^(1/nK); \\ mean of H,h
print("f=",f,"=",Df," H=",precision(H,1),
" C=",precision(C,1)," omega(f)=",omega(f)," nK=",nK))))}

f=63=[3,2; 7,1]        H=1     C=0.883120128476567309  omega(f)=2  nK=1
f=657=[3,2; 73,1]      H=3     C=2.469416204162289171  omega(f)=2  nK=1
f=679=[7,1; 97,1]      H=4     C=3.289303128491503489  omega(f)=2  nK=1
f=1261=[13,1; 97,1]    H=7     C=5.650366096932072126  omega(f)=2  nK=1
f=1267=[7,1; 181,1]    H=12    C=9.684962591947700189  omega(f)=2  nK=1
f=1687=[7,1; 241,1]    H=13    C=10.40231258099219766  omega(f)=2  nK=1
f=1899=[3,2; 211,1]    H=16    C=12.75746054059586620  omega(f)=2  nK=1
f=2119=[13,1; 163,1]   H=28    C=22.25225881928152337  omega(f)=2  nK=1
f=2817=[3,2; 313,1]    H=112   C=88.25197174283300061  omega(f)=2  nK=1
f=5947=[19,1; 313,1]   H=196   C=151.0174501642113834  omega(f)=2  nK=1
f=7441=[7,1; 1063,1]   H=325   C=248.7335806711150378  omega(f)=2  nK=1
f=12439=[7,1; 1777,1]  H=432   C=325.5668539139437488  omega(f)=2  nK=1
f=14527=[73,1; 199,1]  H=637   C=477.8308156077793569  omega(f)=2  nK=1
f=36667=[37,1; 991,1]  H=729   C=531.8623730500720912  omega(f)=2  nK=1
f=38503=[139,1; 277,1] H=1296  C=944.1481890324295829  omega(f)=2  nK=1
f=45229=[31,1; 1459,1] H=1417  C=1027.323779063204492  omega(f)=2  nK=1
f=51457=[7,1; 7351,1]  H=2548  C=1840.162134135815576  omega(f)=2  nK=1
f=53809=[7,1; 7687,1]  H=59584 C=42973.82464267023290  omega(f)=2  nK=1
\end{verbatim}
\normalsize

\subsection{Degree $6$ imaginary cyclic fields of conductor $f$}
We consider the more complex case of an abelian imaginary 
extension of degree $6$. 
In the following program, the odd prime number $p$ may be arbitrary,
giving the results for degree $2\,p$ abelian fields. Since $2$ does not 
ramify in the degree $p$ subfield of $K$, the $2$-part of $f_K$ if that
of the conductor of the quadratic field. Then the $p$-part may be
$p$ or $p^2$. The signature of $K$ is given by the sign of the 
discriminant $D$. Recall that $r(G)=1$ and $R(G)=2$:

\subsubsection{$G \simeq \Z/6\Z$. Successive maxima of the mean of 
$\frac{H}{\raise1.6pt \hbox{\footnotesize $g_{K/\Q}^{}$} \cdot (\sqrt{D}\,)^\varepsilon}$.}
${}$
\footnotesize
\begin{verbatim}
{eps=0.1;Cm=0;Q=bnfinit(y-1,1);p=3; \\ successive conductors:
for(f=1,10^6,phi=eulerphi(f);if(Mod(phi,2*p)!=0,next);
f2=valuation(f,2);fp=valuation(f,p);ff=f/(2^f2*p^fp);
if(f2==1||f2>3||fp>2||core(ff)!=ff,next);
\\ list of degree 2*p polynomials in the fth cyclotomic field:
V=Mat(polsubcyclo(f,2*p));dV=matsize(V)[2];nK=0;HH=1;hh=1;CC=1.0;
for(k=1,dV,P=component(V,k)[1];D=nfdisc(P); \\ discriminant
if(omega(D)==omega(f)&sign(D)==-1,fK=rnfconductor(Q,P);
if(component(fK[1][1],1)[1]==f,K=bnfinit(P,1); \\ test fK=f
\\ computation of the genus number:
Df=factor(f);df=matsize(Df)[1];Div=component(Df,1);gK=1;for(j=1,df,
q=Div[j];F=idealfactor(K,q);eq=component(F,1)[1][3];gK=gK*eq);gK=gK/(2*p);
nK=nK+1;H=K.no;h=H/gK;C=h/abs(D)^(eps/2);HH=HH*H;hh=hh*h;CC=CC*C))); 
\\ mean of CC and extrema test:
if(nK!=0,C=CC^(1/nK);if(C>Cm,Cm=C;H=HH^(1/nK);h=hh^(1/nK); \\ means of H,h
print("f=",f,"=",Df," H=",precision(H,1),
" C=",precision(C,1)," omega(f)=",omega(f)," nK=",nK))))}

f=7=[7,1]               H=1         C=0.614788152951264365  omega(f)=1  nK=1
f=31=[31,1]             H=9         C=3.814187916735195017  omega(f)=1  nK=1
f=127=[127,1]           H=65        C=19.36254438931795110  omega(f)=1  nK=1
f=215=[5,1; 43,1]       H=266       C=40.79801435406385917  omega(f)=2  nK=1
f=223=[223,1]           H=301       C=77.89153810527680943  omega(f)=1  nK=1
f=439=[439,1]           H=405       C=88.47873262719499548  omega(f)=1  nK=1
f=527=[17,1; 31,1]      H=1026      C=142.1378046706661584  omega(f)=2  nK=1
f=607=[607,1]           H=832       C=167.6201224272746063  omega(f)=1  nK=1
f=919=[919,1]           H=1273      C=231.2062265704086934  omega(f)=1  nK=1
f=1063=[1063,1]         H=3211      C=562.3503754553501155  omega(f)=1  nK=1
f=1399=[1399,1]         H=5184      C=847.6382852373517913  omega(f)=1  nK=1
f=1535=[5,1; 307,1]     H=13034     C=1222.975132851082342  omega(f)=2  nK=1
f=1831=[1831,1]         H=46417     C=7095.859076727868026  omega(f)=1  nK=1
f=4219=[4219,1]         H=136080    C=16884.64442622664510  omega(f)=1  nK=1
f=7295=[5,1; 1459,1]    H=378560    C=24057.21799929730949  omega(f)=2  nK=1
f=8287=[8287,1]         H=233415    C=24464.11035100730691  omega(f)=1  nK=1
f=9127=[9127,1]         H=355167    C=36337.12762910420034  omega(f)=1  nK=1
f=9511=[9511,1]         H=9535041   C=965530.5493702800350  omega(f)=1  nK=1
f=32359=[32359,1]       H=58428335  C=4356369.789422838646  omega(f)=1  nK=1
f=82351=[82351,1]       H=131613755 C=7769338.805955799822  omega(f)=1  nK=1
f=97039=[97039,1]       H=152805744 C=8657711.338391957094  omega(f)=1  nK=1
f=110359=[110359,1]     H=267362135 C=14668913.96712886411  omega(f)=1  nK=1
f=110735=[5,1; 22147,1] H=749451690 C=24129005.39563081245  omega(f)=2  nK=1
(...)
\end{verbatim}
\normalsize

For instance, the field of conductor $f=9511$ is such that
$\order \Cl_K = 9535041 = 69 \cdot 73 \cdot (3 \cdot 631)$,
where the classe number of the quadratic (resp. cubic) field
is $69$ (resp. $73$), the large number of ``relative'' classes being 
$1893 = 3 \cdot 631$.

\subsubsection{$G \simeq \Z/6\Z$. Successive maxima of the 
mean of $\frac{H}{(\sqrt{D}\,)^\varepsilon}$.}
Without the genus factor, we obtain supplementary
conductors, as $f=46631$ for which $\omega(f) = 3$
and $f=62909$ with the first $\omega(f) = 4$:

\footnotesize
\begin{verbatim}
{eps=0.1;Cm=0;Q=bnfinit(y-1,1);p=3;
for(f=1,10^6,phi=eulerphi(f);if(Mod(phi,2*p)!=0,next);
f2=valuation(f,2);fp=valuation(f,p);ff=f/(2^f2*p^fp);
if(f2==1||f2>3||fp>2||core(ff)!=ff,next);
V=Mat(polsubcyclo(f,2*p));dV=matsize(V)[2];nK=0;HH=1;hh=1;CC=1.0;
for(k=1,dV,P=component(V,k)[1];D=nfdisc(P);
if(omega(D)==omega(f)&sign(D)==-1,fK=rnfconductor(Q,P);
if(component(fK[1][1],1)[1]==f,K=bnfinit(P,1);
nK=nK+1;H=K.no;C=H/abs(D)^(eps/2);HH=HH*H;CC=CC*C)));
if(nK!=0,C=CC^(1/nK);if(C>Cm,Cm=C;H=HH^(1/nK);
print("f=",f,"=",factor(f)," H=",precision(H,1),
" C=",precision(C,1)," omega(f)=",omega(f)," nK=",nK))))}
f=7=[7,1]               H=1         C=0.614788152951264365  omega(f)=1  nK=1
f=31=[31,1]             H=9         C=3.814187916735195017  omega(f)=1  nK=1
f=117=[3,2; 13,1]       H=13.637704533357613719 
                                    C=4.854098763584681809  omega(f)=2  nK=5
f=127=[127,1]           H=65        C=19.36254438931795110  omega(f)=1  nK=1
f=215=[5,1; 43,1]       H=266       C=81.59602870812771834  omega(f)=2  nK=1
f=439=[439,1]           H=405       C=88.47873262719499548  omega(f)=1  nK=1
f=527=[17,1; 31,1]      H=1026      C=284.2756093413323168  omega(f)=2  nK=1
f=815=[5,1; 163,1]      H=1440      C=316.5702435490889628  omega(f)=2  nK=1
f=1023=[3,1;11,1;31,1]  H=2416      C=606.0094764864390741  omega(f)=3  nK=1
f=1399=[1399,1]         H=5184      C=847.6382852373517913  omega(f)=1  nK=1
f=1535=[5,1; 307,1]     H=13034     C=2445.950265702164685  omega(f)=2  nK=1
f=1831=[1831,1]         H=46417     C=7095.859076727868026  omega(f)=1  nK=1
f=4219=[4219,1]         H=136080    C=16884.64442622664510  omega(f)=1  nK=1
f=7295=[5,1; 1459,1]    H=378560    C=48114.43599859461900  omega(f)=2  nK=1
f=9511=[9511,1]         H=9535041   C=965530.5493702800350  omega(f)=1  nK=1
f=32359=[32359,1]       H=58428335  C=4356369.789422838646  omega(f)=1  nK=1
f=82351=[82351,1]       H=131613755 C=7769338.805955799822  omega(f)=1  nK=1
f=97039=[97039,1]       H=152805744 C=8657711.338391957094  omega(f)=1  nK=1
f=110359=[110359,1]     H=267362135 C=14668913.96712886411  omega(f)=1  nK=1
f=110735=[5,1; 22147,1] H=749451690 C=48258010.79126162491  omega(f)=2  nK=1
(...)
\end{verbatim}
\normalsize

\subsection{Degree $10$ imaginary cyclic fields of conductor $f$}
${}$
\subsubsection{$G \simeq \Z/10\Z$. Successive maxima of the mean of $\frac{H}
{\raise1.6pt \hbox{\footnotesize $g_{K/\Q}^{}$} \cdot (\sqrt{D}\,)^\varepsilon}$}
${}$
\footnotesize
\begin{verbatim}
f=11=[11,1]           H=1           C=0.339917318183292738  omega(f)=1  nK=1
f=31=[31,1]           H=3           C=0.639747345421760414  omega(f)=1  nK=1
f=71=[71,1]           H=7           C=1.028091534474586170  omega(f)=1  nK=1
f=77=[7,1; 11,1]      H=5           C=1.177981365155155362  omega(f)=2  nK=1
f=88=[2,3; 11,1]      H=10.488088481701515470 
                                    C=1.591537796213539734  omega(f)=2  nK=2
f=123=[3,1; 41,1]     H=22.627416997969520780 
                                    C=2.508456244067394871  omega(f)=2  nK=2
f=124=[2,2; 31,1]     H=41          C=7.340475295096265777  omega(f)=2  nK=1
f=151=[151,1]         H=1967        C=205.7144190804658850  omega(f)=1  nK=1
f=431=[431,1]         H=161931      C=10563.67974232756675  omega(f)=1  nK=1
f=1091=[1091,1]       H=262667      C=11281.95533783001965  omega(f)=1  nK=1
f=1255=[5,1; 251,1]   H=426432      C=11863.80817047845353  omega(f)=2  nK=1
f=2111=[2111,1]       H=411649      C=13137.33757891912240  omega(f)=1  nK=1
f=2351=[2351,1]       H=22273713    C=677218.6915129510617  omega(f)=1  nK=1
f=3931=[3931,1]       H=43298816    C=1044599.109023261171  omega(f)=1  nK=1
f=5051=[5051,1]       H=704444539   C=15181930.21787324823  omega(f)=1  nK=1
(...)
\end{verbatim}
\normalsize

\subsubsection{$G \simeq \Z/10\Z$. Successive maxima of the 
mean of  $\frac{H}{(\sqrt{D}\,)^\varepsilon}$}
${}$
\footnotesize
\begin{verbatim}
f=11=[11,1]             H=1          C=0.339917318183292738 omega(f)=1 nK=1
f=31=[31,1]             H=3          C=0.639747345421760414 omega(f)=1 nK=1
f=55=[5,1; 11,1]        H=4          C=0.909265644114650678 omega(f)=2 nK=1
f=71=[71,1]             H=           C=1.028091534474586170 omega(f)=1 nK=1
f=77=[7,1; 11,1]        H=5          C=1.177981365155155362 omega(f)=2 nK=1
f=88=[2,3; 11,1]        H=10.488088481701515470 
                                     C=2.250774336434575013 omega(f)=2 nK=2
f=123=[3,1; 41,1]       H=22.627416997969520780 
                                     C=3.547492840979584473 omega(f)=2 nK=2
f=124=[2,2; 31,1]       H=41         C=7.340475295096265777 omega(f)=2 nK=1
f=132=[2,2; 3,1; 11,1]  H=44         C=8.035828447368965852 omega(f)=3 nK=1
f=143=[11,1; 13,1]      H=50         C=8.950709414213818126 omega(f)=2 nK=1
f=151=[151,1]           H=1967       C=205.7144190804658850 omega(f)=1 nK=1
f=431=[431,1]           H=161931     C=10563.67974232756675 omega(f)=1 nK=1
f=1091=[1091,1]         H=262667     C=11281.95533783001965 omega(f)=1 nK=1
f=1255=[5,1; 251,1]     H=426432     C=23727.61634095690706 omega(f)=2 nK=1
f=2351=[2351,1]         H=22273713   C=677218.6915129510617 omega(f)=1 nK=1
f=3931=[3931,1]         H=43298816   C=1044599.109023261171 omega(f)=1 nK=1
f=5051=[5051,1]         H=704444539  C=15181930.21787324823 omega(f)=1 nK=1
f=8831=[8831,1]         H=9494084864 C=159128753.6488153656 omega(f)=1 nK=1
(...)
\end{verbatim}
\normalsize

\subsection{Some additional data}
We indicate the numerical results for some other abelian groups 
(limited by PARI/GP to groups $G$ of order less than $12$),
for the mean of $\frac{H} {\raise1.6pt \hbox{\footnotesize $g_{K/\Q}^{}$} 
\cdot (\sqrt{D}\,)^\varepsilon}$ with $\varepsilon=0.01$.
These results strengthen somewhat the conjectural inequality
$r(G) \leq \omega(f) \leq R(G)$ of Conjecture \ref{genconj}.
\subsubsection{$G=\Z/2\Z \times \Z/4\Z$.}
${}$
\footnotesize
\begin{verbatim}
f=15=[3,1; 5,1]      H=1       h=1      C=0.9321557326415818 omega(f)=2 nK=1
f=39=[3,1; 13,1]     H=2       h=2      C=1.8116290245333817 omega(f)=2 nK=1
f=51=[3,1; 17,1]     H=5       h=5      C=4.4927692286025652 omega(f)=2 nK=1
f=123=[3,1; 41,1]    H=17      h=17     C=14.877260946454665 omega(f)=2 nK=1
f=187=[11,1; 17,1]   H=17      h=17     C=14.883586644228593 omega(f)=2 nK=1
f=272=[2,4; 17,1] 
                H=43.886403 h=17.416330 C=14.929910722942526 omega(f)=2 nK=9
f=287=[7,1; 41,1]    H=35      h=35     C=30.114979221003303 omega(f)=2 nK=1
f=291=[3,1; 97,1]    H=68      h=68     C=57.991365573835759 omega(f)=2 nK=1
f=327=[3,1; 109,1]   H=102     h=102    C=86.683203264487910 omega(f)=2 nK=1
f=391=[17,1; 23,1]   H=105     h=105    C=90.581868448183013 omega(f)=2 nK=1
f=411=[3,1; 137,1]   H=111     h=111    C=93.686914633082859 omega(f)=2 nK=1
f=451=[11,1; 41,1]   H=111     h=111    C=94.648036619685123 omega(f)=2 nK=1
f=511=[7,1; 73,1]    H=791     h=791    C=668.92101009616677 omega(f)=2 nK=1
f=623=[7,1; 89,1]    H=1199    h=1199   C=1007.9419532931184 omega(f)=2 nK=1
f=791=[7,1; 113,1]   H=1280    h=1280   C=1068.3551688844457 omega(f)=2 nK=1
f=1371=[3,1; 457,1]  H=5340    h=5340   C=4347.1171682925243 omega(f)=2 nK=1
f=1679=[23,1; 73,1]  H=8268    h=8268   C=6827.5710056916921 omega(f)=2 nK=1
f=1799=[7,1; 257,1]  H=17475   h=17475  C=14230.404002409721 omega(f)=2 nK=1
f=3027=[3,1; 1009,1] H=27300   h=27300  C=21702.186282280434 omega(f)=2 nK=1
f=3431=[47,1; 73,1]  H=36465   h=36465  C=29684.830601395338 omega(f)=2 nK=1
f=3503=[31,1; 113,1] H=43836   h=43836  C=35514.960098930545 omega(f)=2 nK=1
f=4372=[2,2; 1093,1] H=60125   h=60125  C=47408.398500381492 omega(f)=2 nK=1
f=4616=[2,3; 577,1] 
         H=129838.811331 h=77202.619119 C=60564.691595960546 omega(f)=2 nK=4
f=5183=[71,1; 73,1]  H=89600   h=89600  C=72340.774959541852 omega(f)=2 nK=1
f=5311=[47,1; 113,1] H=112355  h=112355 C=90273.045356601456 omega(f)=2 nK=1
f=7031=[79,1; 89,1]  H=197100  h=197100 C=157852.83359289318 omega(f)=2 nK=1
f=7251=[3,1; 2417,1] H=267325  h=267325 C=207013.58713575608 omega(f)=2 nK=1
f=8116=[2,2; 2029,1] H=502775  h=502775 C=389147.29273104160 omega(f)=2 nK=1
f=10376=[2,3; 1297,1] 
       H=1048152.694144 h=623235.320742 C=477184.80861414767 omega(f)=2 nK=4
f=13271=[23,1;577,1] H=5225220 h=5225220C=4055409.8285235355 omega(f)=2 nK=1
\end{verbatim}
\normalsize
\subsubsection{$G=\Z/2\Z \times \Z/3\Z \times \Z/5Z$.}
${}$
\footnotesize
\begin{verbatim}
f=31=[31,1]       H=9 h=9            C=0.061911033427912298 omega(f)=1 nK=1
f=77=[7,1; 11,1]  H=35.777087 h=35.777087         
                                     C=0.188405690668671425 omega(f)=2 nK=2
f=93=[3,1; 31,1]  H=755 h=755        C=2.705201622444429614 omega(f)=2 nK=1
f=124=[2,2; 31,1] H=5084 h=5084      C=14.68095059019253155 omega(f)=2 nK=1
f=151=[151,1]     H=238007 h=238007  C=164.8440046936183581 omega(f)=1 nK=1
f=211=[211,1]     H=467523 h=467523  C=199.3395258269297336 omega(f)=1 nK=1
f=244=[2,2; 61,1] H=900747.730538 h=636924.828402 
                                     C=643.3512814804028363 omega(f)=2 nK=2
f=248=[2,3; 31,1] H=731438.520484 h=517205.137855 
                                     C=814.9949665105118220 omega(f)=2 nK=2
f=331=[331,1]     H=10805967 h=10805967 
                                     C=2398.352674944173664 omega(f)=1 nK=1
\end{verbatim}
\normalsize

\subsubsection{$G=\Z/9\Z$.}
${}$
\footnotesize
\begin{verbatim}
f=19=[19,1]      H=1     h=1     C=0.888893756893086074  omega(f)=1 nK=1
f=163=[163,1]    H=4     h=4     C=3.262665022855041198  omega(f)=1 nK=1
f=1063=[1063,1]  H=13    h=13    C=9.837438305141874497  omega(f)=1 nK=1
f=1141=[7,1; 163,1] 
        H=52.306787 h=17.435595  C=13.41517512486724241  omega(f)=2 nK=2
f=1153=[1153,1]  H=19    h=19    C=14.33112977180156483  omega(f)=1 nK=1
f=1459=[1459,1]  H=247   h=247   C=184.5587931331744690  omega(f)=1 nK=1
f=3547=[3547,1]  H=16777 h=16777 C=12098.17485260014863  omega(f)=1 nK=1
\end{verbatim}
\normalsize

\subsubsection{$G=\Z/8\Z$.}
${}$
\footnotesize
\begin{verbatim}
f=32=[2,5]         H=1        h=1        C=0.898132372883934 omega(f)=1 nK=1
f=68=[2,2;17,1]    H=4        h=2        C=1.761664016490415 omega(f)=2 nK=1
f=73=[73,1]        H=89       h=89       C=76.59028869662640 omega(f)=1 nK=1
f=89=[89,1]        H=113      h=113      C=96.57168147147798 omega(f)=1 nK=1
f=233=[233,1]      H=1433     h=1433     C=1184.100959851340 omega(f)=1 nK=1
f=339=[3,1;113,1]  H=4274     h=2137     C=1771.757792528851 omega(f)=2 nK=1
f=601=[601,1]      H=34153    h=34153    C=27300.35230848298 omega(f)=1 nK=1
f=1609=[1609,1]    H=189473   h=189473   C=146324.7766845321 omega(f)=1 nK=1
f=1731=[3,1;577,1] H=460656   h=230328   C=180369.5234991963 omega(f)=2 nK=1
f=2441=[2441,1]    H=387433   h=387433   C=294870.7790203549 omega(f)=1 nK=1
f=3449=[3449,1]    H=549225   h=549225   C=412981.9168610820 omega(f)=1 nK=1
f=4201=[4201,1]    H=647593   h=647593   C=483598.3404675706 omega(f)=1 nK=1
f=4297=[4297,1]    H=1747657  h=1747657  C=1304053.513066895 omega(f)=1 nK=1
f=4409=[4409,1]    H=8637921  h=8637921  C=6439576.759445647 omega(f)=1 nK=1
f=7753=[7753,1]    H=98996475 h=98996475 C=72358288.91356542 omega(f)=1 nK=1
\end{verbatim}
\normalsize

\smallskip
We have no justification for these results and hope that analytic tools may help. 
In this direction see \cite{Da,DaKM,Du,GS, Lam,L0,L1,L2,R} among a wide literature 
on the subject in the spirit of Brauer--Siegel theorems and 
Cohen--Lenstra--Martinet--Malle heuristics.

\newpage
\section{Appendix -- Numerical tables for quadratic fields}
\subsection{Table I: Imaginary case -- Successive maxima of 
$\frac{H}{2^{N-1}(\sqrt D\,)^\varepsilon}$} \label{Table1}
${}$
\footnotesize
% [inline block 0: 2 envs, 121111 chars -> code_tex | \begin{verbatim} {eps=0.02;bD=1;BD=10^10;ND=0;N1=0;N2=0;N3=0;N4=0;N5=0;N6=0;...]

\normalsize

\medskip
These two tables on imaginary quadratic fields may suggest some regularities 
depending on the context of definition of the successive maxima; for the non-genus
part of the class number and $C_\varepsilon(D) = \frac{H}{2^{N-1} \cdot 
(\sqrt D)^\varepsilon}$, the
Conjecture \ref{genconj} is verified, at least for $D \in [3,\, 2 \times 10^9]$,
since we have not found any composite discriminant, and in the case using the 
whole class number $\frac{H}{(\sqrt D)^\varepsilon}$, probabilistic densities 
for $\omega(D)$ seem to exist in a different manner than that predicted by the
Conjecture on the non-genus part.

\newpage
\subsection{Table III: Real case -- Successive maxima of 
$\frac{H}{2^{N-1}(\sqrt D\,)^\varepsilon}$} \label{Table3}
The real quadratic fields give rise to the same results and
comments as for the imaginary case. 

\smallskip
The computations are
slower and $H$ must be computed in the restricted sense
(two days of computation to obtain the last discriminant 
$D_K=34574401$ of the following list).

\smallskip
Due to the particular case $D_K=136$, giving a local maximum,
we get $N_2=1$ at the end of the computations, which 
is an excellent verification of the conjecture:

\smallskip
\footnotesize
\begin{verbatim}
{eps=0.02;bD=2;BD=10^10;ND=0;N1=0;N2=0;N3=0;
Cm=0;for(D=bD,BD,e2=valuation(D,2);d=D/2^e2;
if(e2==1||e2>3||core(d)!=d||(e2==0&Mod(d,4)!=1)||(e2==2&Mod(d,4)!=-1),
next);ND=ND+1;P=x^2-D;K=bnfinit(P,1);H=bnfnarrow(K)[1];
N=omega(D);h=H/2^(N-1);C=h/D^(eps/2);if(C>Cm,Cm=C;
if(N==1,N1=N1+1);if(N==2,N2=N2+1);if(N>=3,N3=N3+1);print();
print("D_K=",D,"   H=",H,"   h=",h,"   N=",N,"   C=",precision(C,1));
print("ND=",ND,"   N1=",N1,"   N2=",N2,"   N3=",N3)))}

eps=0.02
D_K=5   H=1   h=1   N=1   C=0.9840344433634576028
ND=1     N1=1     N2=0     N3=0

D_K=136   H=4   h=2   N=2   C=1.9041212797772518802
ND=40     N1=1     N2=1     N3=0

D_K=229   H=3   h=3   N=1   C=2.841338001830550898
ND=69     N1=2     N2=1     N3=0

D_K=401   H=5   h=5   N=1   C=4.709107022186433819
ND=121     N1=3     N2=1     N3=0

D_K=577   H=7   h=7   N=1   C=6.568803671814335481
ND=177     N1=4     N2=1     N3=0

D_K=1129   H=9   h=9   N=1   C=8.389103838263256518
ND=342     N1=5     N2=1     N3=0

D_K=1297   H=11   h=11   N=1   C=10.239135384664958651
ND=397     N1=6     N2=1     N3=0

D_K=7057   H=21   h=21   N=1   C=19.219102083164795610
ND=2143     N1=7     N2=1     N3=0

D_K=8761   H=27   h=27   N=1   C=24.656886007086108242
ND=2662     N1=8     N2=1     N3=0

D_K=14401   H=43   h=43   N=1   C=39.07369921920168190
ND=4379     N1=9     N2=1     N3=0

D_K=32401   H=45   h=45   N=1   C=40.56083898547825383
ND=9848     N1=10     N2=1     N3=0

D_K=41617   H=57   h=57   N=1   C=51.24861676322140264
ND=12651     N1=11     N2=1     N3=0

D_K=57601   H=63   h=63   N=1   C=56.45939878831204140
ND=17509     N1=12     N2=1     N3=0

D_K=90001   H=87   h=87   N=1   C=77.62056135842406069
ND=27358     N1=13     N2=1     N3=0

D_K=176401   H=153   h=153   N=1   C=135.58961275598587437
ND=53621     N1=14     N2=1     N3=0

D_K=650281   H=207   h=207   N=1   C=181.06701198422100325
ND=197663     N1=15     N2=1     N3=0

D_K=921601   H=235   h=235   N=1   C=204.84361566477445036
ND=280137     N1=16     N2=1     N3=0

D_K=1299601   H=357   h=357   N=1   C=310.1202431051597330
ND=395027     N1=17     N2=1     N3=0

D_K=2944657   H=377   h=377   N=1   C=324.8261637468532506
ND=895068     N1=18     N2=1     N3=0

D_K=3686401   H=455   h=455   N=1   C=391.1518340310603556
ND=1120538     N1=19     N2=1     N3=0

D_K=3920401   H=575   h=575   N=1   C=494.0086344963980478
ND=1191666     N1=20     N2=1     N3=0

D_K=7290001   H=655   h=655   N=1   C=559.2603422239048551
ND=2215899     N1=21     N2=1     N3=0

D_K=10497601   H=1061   h=1061   N=1   C=902.6190252870294034
ND=3190902     N1=22     N2=1     N3=0

D_K=15210001   H=1245   h=1245   N=1   C=1055.2322581637658160
ND=4623288     N1=23     N2=1     N3=0

D_K=34574401   H=1539   h=1539   N=1   C=1293.7521162120423479
ND=10509357    N1=24     N2=1     N3=0
(...)
\end{verbatim}
\normalsize

\end{document}